\documentclass[pdflatex,sn-basic]{sn-jnl}

\usepackage{graphicx}%
\usepackage{multirow}%
\usepackage{amsmath,amssymb,amsfonts}%
\usepackage{amsthm}%
\usepackage{mathrsfs}%
\usepackage[title]{appendix}%
\usepackage{xcolor}%
\usepackage{textcomp}%
\usepackage{manyfoot}%
\usepackage{booktabs}%
\usepackage{algorithm}%
\usepackage{algorithmicx}%
\usepackage{algpseudocode}%
\usepackage{listings}%

\renewcommand{\proofname}{\textbf{Proof}}

\newtheorem{theorem}{Theorem}[section]
\newtheorem{lemma}[theorem]{Lemma}      
\newtheorem{corollary}[theorem]{Corollary}

\newtheorem{example}[theorem]{Example}
\newtheorem{remark}[theorem]{Remark}
\begin{document}

\title[Dual orthogonal tripotent matrices]{Dual orthogonal tripotent matrices}


\author[1]{\fnm{Tan} \sur{Mei}}\email{meitan{\url_}0013@163.com}

\author*[1]{\fnm{Kezheng} \sur{Zuo}}\email{xiangzuo28@163.com}

\author[2]{\fnm{Yang} \sur{Si}}\email{christinasi1113@gmail.com}

\affil [1]{Department of Mathematics, School of Mathematics and Statistics, Hubei Normal University, Huangshi 435002, China}
\affil [2]{Department of Applied Physics and Applied Mathematics, Columbia University, New York, NY 10027, United States of America}


\abstract{In this paper, we study dual orthogonal tripotent matrices, defined by \unboldmath$\hat{A}^3=\hat{A} = \hat{A}^*$, and examine their fundamental algebraic properties.  
 Additionally, we establish several characterizations of this class of matrices using matrix averages involving \unboldmath$\hat{A}$, $\hat{A}_e$, $\hat{A}^*$, $\hat{A}\sp{\scriptscriptstyle N}$, 
as well as integer powers of products such as  \unboldmath$\hat{A}\hat{A}^*$ and $\hat{A}^*\hat{A}$.
These results enrich the theory of dual generalized matrix classes and reveal new perspectives on related dual quaternion matrices.}

\keywords{ New dual Moore-Penrose inverse, Dual orthogonal tripotent matrices, Dual orthogonal idempotent matrices, Dual Hartwig-Spindelb\"ock decomposition}


\pacs[MSC Classification]{15A09, 15A66}

\maketitle

\section{Introduction}

Recently, Mei et al. (\citeyear{Mei2}) investigated the orthogonal tripotent matrices, which are defined as complex square matrices  $A$ satisfying $A^3 = A = A^*$.
These matrices naturally generalize orthogonal idempotent matrices, i.e., $A^2=A=A^*$, which have been widely applied in diverse fields such as least squares problems in statistics, control theory, operator theory, and matrix equations (Conway \citeyear{conway}; To{\v{s}}i{\'c} and Cvetkovi{\'c}-Ili{\'c} \citeyear{To}; Pes and Rodriguez \citeyear{pes}).
 Given the theoretical significance and practical utility of orthogonal idempotent matrices, it is both natural and worthwhile to extend the concept of orthogonal tripotent matrices to the broader setting of dual quaternion matrices.
This leads to the introduction of dual orthogonal tripotent matrices, namely, dual quaternion matrices $\hat{A}$ satisfying $\hat{A}^3 = \hat{A} = \hat{A}^*$.

Dual quaternions, introduced by Clifford (\citeyear{Clifford}), offer an elegant and efficient mathematical framework for representing rigid body motions in three-dimensional space. 
With their distinctive algebraic structure, dual quaternion matrices have become powerful tools in robotics and motion control (Chen et al. \citeyear{chen}; Gu et al. \citeyear{Gu}; Giribet et al. \citeyear{Giribet}; Xiao et al. \citeyear{xiao}), kinematic analysis and synthesis of spatial mechanisms (Ding \citeyear{Ding}; Udwadia et al. \citeyear{Udwadia1}; Yang and Wang \citeyear{Yang}), among other fields.

 Under the Hermitian condition $\hat{A} = \hat{A}^*$, it follows from Qi and Luo (\citeyear{Qi1})  that $\hat{A}$ admits a unitary diagonalization over the dual numbers. 
This diagonalizability significantly simplifies the analysis of dual orthogonal tripotent matrices by reducing complex matrix operations to the manipulation of diagonal components. 
Building on this diagonalization property and considering the tripotent condition $\hat{A}=\hat{A}^3$, which generalizes $k$-idempotency over the complex field (Baksalary and Trenkler \citeyear{Baksalary2}), 
 it can be shown that the eigenvalues of dual orthogonal tripotent matrices are restricted to the real numbers  $1$, $-1$ and $0$.
This restriction provides a foundation for further exploring the algebraic and spectral properties of these matrices and facilitates their potential applications.

Motivated by these findings, the study of dual orthogonal tripotent matrices carries significant theoretical value and broad application prospects. Accordingly, this paper focuses on exploring their properties and characteristics.

The remaining content of this paper is organized as folleows. 
In Section \ref{222},  we review two types of dual generalized inverses and dual matrix classes, and present the necessary lemmas and theorems for the subsequent analysis.
In Section \ref{333}, we establish fundamental properties of dual orthogonal tripotent matrices and give several necessary and sufficient conditions for such matrices.  
Furthermore,  we identify relationships between $\hat{A}$, $\hat{A}^*$, $\hat{A}\sp{\scriptscriptstyle N}$, $\hat{A}_e$ and dual orthogonal tripotent matrices.
In Section \ref{444}, we characterize dual orthogonal tripotent matrices by means of dual quaternion matrices, dual generalized inverses, mean value type equations, linear equations, as well as integer powers of dual matrices.
Finally, concluding remarks are given in Section \ref{555}.

\section{Preliminaries}\label{222}

Throughout this paper the following notations and definitions are used.
 Let \( \mathbb{Z}\), \( \mathbb{R}\), \( \mathbb{D}\), \( \mathbb{Q}\) and \( \mathbb{Q}^{m \times n} \) be the sets of integers, real numbers, dual numbers, quaternions and \( m \times n \) quaternion matrices, respectively. 

Dual number \(\hat{a} = a_s + \epsilon a_d\) is defined by two real components, the standard part \(a_s\) as and the infinitesimal part \(a_d\),  along with the infinitesimal unit $\epsilon$, satisfying
$\epsilon \neq 0$, $\epsilon^2 = 0$, $ 0\epsilon = \epsilon 0 = 0$ and $1\epsilon = \epsilon 1 = \epsilon$.
The dual number \( \hat{a} \) is appreciable if \( a_s \neq 0 \),  and \( \hat{a} \) is invertible if and only if it is appreciable, with the inverse given by $ \hat{a}^{-1}=a_s^{-1}-\epsilon a_s^{-2}a_d$.
For any positive integer $p \in \mathbb{Z}$, we have
$\hat{a}^p = a_s^p + \epsilon p a_s^{p-1}a_d.$
For any $\hat{a}=a_s+\epsilon a_d \in \mathbb{D}$ and $\hat{b}=b_s+\epsilon b_d \in \mathbb{D}$, we have $\hat{a}=\hat{b}$ if and only if $a_s=b_s$ and $a_d=b_d$.
Furthermore, $\hat{a}>\hat{b}$ if and only if either $a_s>b_s$, or $a_s=b_s$ and $a_d>b_d$. 
Consequently, a dual number $\hat{a}$ with $a_s>0$ is called positive appreciable, while one with $a_s=0$ and $a_d>0$ is termed positive infinitesimal.

A quaternion $m\in \mathbb{Q}$ takes the form $m = m_0 + m_1\mathbf{i} + m_2\mathbf{j} + m_3\mathbf{k}$, where $m_0, m_1, m_2,m_3 \in \mathbb{R}$, , and the imaginary units $\mathbf{i}, \mathbf{j}$ and $\mathbf{k}$ satisfy the following identities: $\mathbf{i}^2 = \mathbf{j}^2 = \mathbf{k}^2 = \mathbf{ijk} = -1$, $\mathbf{ij} = -\mathbf{ji} = \mathbf{k}$, $\mathbf{jk} = -\mathbf{kj} = \mathbf{i}$, $\mathbf{ki} = -\mathbf{ik} = \mathbf{j}$. 
The conjugate of $m$ is $m^*= m_0 - m_1\mathbf{i} - m_2\mathbf{j} - m_3\mathbf{k}$.

Recently, Qi et al. (\citeyear{Qi2}) introduced that the set of dual quaternions is denoted by $\mathbb{DQ}$.  
A dual quaternion number \(\hat{a} \in \mathbb{DQ}\) is expressed as
$\hat{a}=a_s+\epsilon a_d,$
where \(a_s,a_d \in \mathbb{Q}\) represent the standard and infinitesimal parts, respectively. 
In addition, for any $\hat{a} \in \mathbb{D}$ and $\hat{b} \in \mathbb{DQ}$, we have  $\hat{a}\hat{b}=\hat{b}\hat{a}$.
The  conjugate of \(\hat{a} \in \mathbb{DQ}\) is defined as \(\hat{a}^*=a_s^*+\epsilon a_d^*\). 

Since each entry of a dual quaternion matrix is a dual quaternion, any matrix \( \hat{A} \in \mathbb{DQ}^{m \times n} \) can be written as
$$\hat{A} = A_s + \epsilon A_d,$$
where \( A_{s}, A_{d} \in \mathbb{Q}^{m \times n} \) are referred to as the standard and infinitesimal part of \( \hat{A} \), respectively.
For $A =(a_{ij}) \in \mathbb{Q}^{m \times n}$, the conjugate transpose is defined as $A^*=(a_{ji}^*)\in \mathbb{Q}^{m \times n}$.
Accordingly, for $\hat{A}\in \mathbb{DQ}^{m \times n}$,  $\hat{A}^* = A_s^* + \epsilon A_d^*$.
For any $\hat{A},\hat{B} \in \mathbb{DQ}^{m \times n}$,  $\hat{A}=\hat{B}$ if and only if  $A_s=B_s$ and $A_d=B_d$.
Additionally,  $(\hat{A}\hat{B})^*=\hat{B}^*\hat{A}^*$.

The $n \times n$ identity matrix is denoted by $I_n$. 
The set  $\mathbb{DQ}_n^{H}$ consists of $n \times n$ dual quaternion Hermitian matrices, where $\hat{A}\in\mathbb{DQ}_n^{H}$ satisfies $\hat{A}=\hat{A}^*$. 
The set $\mathbb{DQ}_n^{U}$ contains $n \times n$ dual quaternion unitary matrices, where $\hat{A}\in\mathbb{DQ}_n^{U}$ satisfies $\hat{A}\hat{A}^*=I_n$.
Finally, $\operatorname{rank}(A_s)$  denotes the rank of $A_s$, while $\operatorname{ARank}(\hat{A})$ denotes the appreciable rank of $\hat{A}$, given by $\operatorname{ARank}(\hat{A})=\operatorname{rank}(A_s)$.

Denote by $\mathbb{DQ}^{OP}$, $\mathbb{DQ}^{3\text{-}OP}$ the sets of dual orthogonal idempotent matrices, dual orthogonal tripotent matrices, respectively; that is,
\begin{align*}
&\mathbb{DQ}_n^{OP} = \{A\in\mathbb{DQ}^{n\times n}:\hat{A}^2 =  \hat{A}=\hat{A}^* \},\\
&\mathbb{DQ}_n^{3\text{-}OP}= \{A\in\mathbb{DQ}^{n\times n}:\hat{A}^3 =  \hat{A}=\hat{A}^*\}.
\end{align*}
In addition,  noted that $\mathbb{DQ}_n^{OP}\subseteq \mathbb{DQ}_n^{3\text{-}OP}$.

For dual quaternion matrices  $\hat{A} \in \mathbb{DQ}^{m \times n}$ and $\hat{X} \in \mathbb{DQ}^{n \times m}$, if 
\begin{align}\label{FF}
\hat{A}\hat{X}\hat{A} = \hat{A}, \quad  \hat{X}\hat{A}\hat{X} = \hat{X}, \quad  \left( \hat{A}\hat{X} \right)^* = \hat{A}\hat{X}, \quad  \left( \hat{X}\hat{A} \right)^* = \hat{X}\hat{A},
\end{align}
then  $\hat{X}$ is called the dual Moore-Penrose generalized inverse (DMPGI for short) of $\hat{A}$ (Udwadia et al. \citeyear{Udwadia1}), denoted by $\hat{A}^{\dagger}$.
However, the DMPGI of a dual quaternion matrix does not always exist in general (Wang \citeyear{Wang2}). Therefore, Li and Wang (\citeyear{Li1}) and Cui and Qi (\citeyear{Cui1}) introduced the new dual Moore-Penrose inverse (NDMPI for short).
For $\hat{A} \in \mathbb{DQ}^{m \times n}$, the NDMPI of \( \hat{A} \),  denote by \( \hat{A}\sp{\scriptscriptstyle N} \) (Cui and Qi \citeyear{Cui1}),  is defined as the unique dual quaternion matrix $\hat{X}$ satisfying
\begin{align}\label{FF1}
\hat{A} \hat{X} \hat{A} = \hat{A}_e, \quad \hat{X}\hat{A}\hat{X} = \hat{X}, \quad \left( \hat{A}\hat{X} \right)^* = \hat{A}\hat{X}, \quad \left( \hat{X}\hat{A} \right)^* = \hat{X}\hat{A},
\end{align}
where \( \hat{A}_e \) is the  essential part of \( \hat{A} \), i.e., the best rank-$r$ approximation of \( \hat{A} \) (Qi and Luo \citeyear{Qi1}).
 
Additionally, Qi and Luo (\citeyear{Qi1}) have established a unitary decomposition for dual quaternion Hermitian matrices.
\begin{lemma}\label{L01}\emph{(Qi and Luo \citeyear{Qi1})}
  Let $ \hat{A} \in \mathbb{DQ}_n^H $ with $r=\operatorname{ARank}(\hat{A})$. Then there exists \( U \in \mathbb{DQ}_n^{U} \)  such that 
$$\hat{U}^* \hat{A} \hat{U}= \Sigma
=\mathrm{diag} (\lambda_1, \cdots, \lambda_r, 0, \cdots, 0) ,$$
where $\lambda_i \in \mathbb{D}$ and $\lambda_i \neq 0$ for $i=1,2,\cdots,r$ are the  right eigenvalues of \( \hat{A} \).
\end{lemma}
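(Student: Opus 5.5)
The plan is to build $\hat U$ as a first-order perturbation of a quaternion unitary matrix that diagonalizes the standard part. Write $\hat A = A_s+\epsilon A_d$. The condition $\hat A=\hat A^*$ gives $A_s=A_s^*$ and $A_d=A_d^*$. By the spectral theorem for quaternion Hermitian matrices, there is a quaternion unitary $V$ with
$$V^*A_sV=D_s=\mathrm{diag}(\mu_1 I_{k_1},\dots,\mu_t I_{k_t},0\cdot I_{k_0}),$$
where $\mu_1,\dots,\mu_t$ are the \emph{distinct} nonzero real eigenvalues of $A_s$. Put $B=V^*A_dV$. It is Hermitian, and we partition it conformally into blocks $B_{ij}$.

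Since each diagonal block $B_{ii}$ is quaternion Hermitian, we next pick quaternion unitaries $Q_i$ with $Q_i^*B_{ii}Q_i$ real diagonal. Replacing $V$ by $V\,\mathrm{diag}(Q_1,\dots,Q_t,Q_0)$ leaves $D_s$ unchanged, because each $Q_i$ commutes with the scalar block $\mu_iI_{k_i}$. After this step we may assume every $B_{ii}$ is real diagonal.

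We now look for $\hat U=V(I+\epsilon W)$ with $W$ skew-Hermitian. Then $\hat U\hat U^*=I+\epsilon(W+W^*)=I$, so $\hat U\in\mathbb{DQ}_n^U$, and
$$\hat U^*\hat A\hat U=(I-\epsilon W)(D_s+\epsilon B)(I+\epsilon W)=D_s+\epsilon\,(B+D_sW-WD_s).$$
The $(i,j)$ block of $D_sW-WD_s$ is $(\mu_i-\mu_j)W_{ij}$, with $\mu_0=0$. For $i\neq j$ we set $W_{ij}=-B_{ij}/(\mu_i-\mu_j)$, and we set $W_{ii}=0$. Because the $\mu$'s are real and $B_{ji}=B_{ij}^*$, we get $W_{ji}=-W_{ij}^*$, so $W$ is indeed skew-Hermitian. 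This choice cancels every off-diagonal block, and the infinitesimal part reduces to $\mathrm{diag}(B_{11},\dots,B_{00})$, which is real diagonal.

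Hence $\hat U^*\hat A\hat U$ is diagonal with dual-number entries $\mu_i+\epsilon b$. Reordering the columns of $\hat U$ by a permutation puts the nonzero entries first. These entries are right eigenvalues, since $\hat A\hat u_k=\hat u_k\lambda_k$ for the columns $\hat u_k$ of $\hat U$.

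The main obstacle is the repeated-eigenvalue issue. A naive first-order correction divides by $\mu_i-\mu_j$, which fails inside a degenerate eigenspace. That is exactly why the diagonal blocks of $B$ must be diagonalized first by a unitary that preserves $D_s$. The proof must also check that this extra rotation does not reintroduce off-diagonal coupling, which it does not, since it is block diagonal.

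A second point to handle carefully is the zero block. Its entries are $\epsilon b$, which can be nonzero infinitesimals. So the number of nonzero $\lambda_i$ equals $\operatorname{rank}(A_s)=r$ only when $B_{00}=0$. In general, the appreciable eigenvalues occupy the first $r$ positions, and any infinitesimal ones follow. I would state the ordering in this form: the $\lambda_i$ with $i\le r$ are exactly the appreciable ones.
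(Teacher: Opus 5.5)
The paper gives no proof of this lemma; it is quoted from Qi and Luo. Your construction is the standard argument for that result, and it is correct. The $Q_i$ commute with the real scalar blocks $\mu_i I_{k_i}$, and $W_{ji}=-W_{ij}^*$ holds because the $\mu_i$ are real. The matrix $\hat U=V(I+\epsilon W)$ satisfies $\hat U^*\hat U=\hat U\hat U^*=I_n$ directly from $V^*V=VV^*=I_n$. The columns of $\hat U$ have unit-norm standard parts, so they are appreciable and the diagonal entries are genuine right eigenvalues.

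Your caveat about the zero block is also right, and it points to a defect in the statement rather than in your proof. As transcribed here, with exactly $r=\operatorname{ARank}(\hat A)$ nonzero diagonal entries followed by zeros, the lemma is false. For example, $\hat A=\epsilon I_n$ is Hermitian with $\operatorname{ARank}(\hat A)=0$, but $\hat U^*\hat A\hat U=\epsilon\,\hat U^*\hat U=\epsilon I_n\neq 0$ for every unitary $\hat U$.

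More precisely, suppose $\hat U=U_s+\epsilon U_d$ achieves $\mathrm{diag}(\lambda_1,\dots,\lambda_r,0,\dots,0)$. A rank count shows that the last $n-r$ columns $U_{s,0}$ of $U_s$ span the null space of $A_s$. Since $A_sU_{s,0}=0$, the trailing block of the infinitesimal part $U_d^*A_sU_s+U_s^*A_dU_s+U_s^*A_sU_d$ reduces to $U_{s,0}^*A_dU_{s,0}$. So trailing zeros are attainable exactly when the compression of $A_d$ to the null space of $A_s$ vanishes, which is your condition $B_{00}=0$.

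The correct form is the one you state: $\lambda_1,\dots,\lambda_r$ appreciable, then possibly nonzero infinitesimal eigenvalues $\epsilon b$ (the eigenvalues of $B_{00}$), then zeros. The paper's use of the lemma in Section 3 is unaffected. An infinitesimal $\lambda$ with $\lambda^k=\lambda$ for $k\ge 2$ satisfies $\lambda=\lambda^k=0$, so the conclusion $\lambda_i\in\{1,-1,0\}$ still holds.
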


 To facilitate the analysis of dual quaternion matrices, the dual Hartwig-Spindelb\"ock decomposition is introduced.
\begin{lemma}\emph{(Mei et al. \citeyear{mei1})}
 Let $\hat{A} \in \mathbb{DQ}^{n \times n}$ with $r=\operatorname{ARank}(\hat{A})$ and $t=\operatorname{rank}(\hat{A})$.
Then there exists $\hat{U} \in \mathbb{DQ}_n^{U}$ such that
\begin{align}
 \hat{A} &=\hat{U}\begin{bmatrix}
\Sigma_1 K & \Sigma_1 L\\
\Sigma_2 M & \Sigma_2 N
\end{bmatrix}\hat{U}^* \label{F1} \\
&= \hat{U} \left(\begin{bmatrix}
\Sigma_{\mathrm{1s}} K_{1} & \Sigma_{\mathrm{1s}} L_{1} \\ 0 & 0
\end{bmatrix} +\epsilon \begin{bmatrix}
\Sigma_{\mathrm{1d}} K_{1}+\Sigma_{\mathrm{1s}} K_{2} & \Sigma_{\mathrm{1d}} L_{1}+\Sigma_{\mathrm{1s}} L_{2} \\
\Sigma_{\mathrm{2d}} M_{1} & \Sigma_{\mathrm{2d}}  N_{1}
\end{bmatrix}\right)\hat{U}^* \label{ab},
\end{align}
with
  \begin{align*}
  &\Sigma_1 = \Sigma_{\mathrm{1s}} +\epsilon \Sigma_{\mathrm{1d}} = \operatorname{diag}(\mu_1, \ldots, \mu_r)\ \  \text{and} \ \
  \Sigma_2 = \epsilon \Sigma_{\mathrm{2d}}= \operatorname{diag}(\mu_{r+1}, \ldots, \mu_{t}, 0, \ldots, 0),
  \end{align*}
where $\mu_1, \cdots, \mu_r$ are positive appreciable dual numbers, $\mu_{r+1},\cdots,\mu_t$ are positive infinitesimal dual numbers.
And
 \begin{align*}
    & K=K_1+\epsilon K_2 \in \mathbb{DQ}^{r \times r}, \ \quad \quad \ L=L_1+\epsilon L_2 \in \mathbb{DQ}^{r \times (n-r)}, \\
     &M=M_1+\epsilon M_2\in \mathbb{DQ}^{(n-r)\times r }, \ \  N=N_1+\epsilon N_2\in \mathbb{DQ}^{(n-r)\times (n-r) }
 \end{align*}  satisfy
\begin{align}\label{SS}
KK^*+LL^*=I_r, \quad KM^*+LN^*=0, \quad MM^*+NN^*=I_{n-r},
\end{align}
 which  also  give
\begin{align}
 &K_1 K_1^*+L_1L_1^*=I_r \quad \ \ \quad \text{and} \quad K_1 K_2^* +K_2 K_1^*+L_1 L_2^*+L_2 L_1^*=0,\label{S1} \\
&M_1 M_1^*+N_1N_1^*=I_{n-r} \quad \text{and} \quad K_1 M_1^* +L_1 N_1^*=0.\label{S2}
\end{align}
\end{lemma}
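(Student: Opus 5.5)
The plan is to derive the decomposition from a dual singular value decomposition, in the same way the classical Hartwig--Spindelb\"ock decomposition comes from the complex SVD. The one input not stated earlier in the paper is the following fact, which I take from Qi and Luo (\citeyear{Qi1}) as the rectangular companion of Lemma \ref{L01}. For $\hat{A}\in\mathbb{DQ}^{n\times n}$ with $r=\operatorname{ARank}(\hat{A})$ and $t=\operatorname{rank}(\hat{A})$, there exist $\hat{V},\hat{W}\in\mathbb{DQ}_n^{U}$ such that
\[
\hat{A}=\hat{V}\begin{bmatrix}\Sigma_1&0\\0&\Sigma_2\end{bmatrix}\hat{W}^*,
\]
where $\Sigma_1=\operatorname{diag}(\mu_1,\ldots,\mu_r)$ has positive appreciable entries and $\Sigma_2=\operatorname{diag}(\mu_{r+1},\ldots,\mu_t,0,\ldots,0)$ has positive infinitesimal entries. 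The number of appreciable singular values equals $\operatorname{rank}(A_s)=r$, and the total number of nonzero ones is $t$.

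\textbf{Building the decomposition.} Set $\hat{U}=\hat{V}$ and
\[
\begin{bmatrix}K&L\\M&N\end{bmatrix}:=\hat{W}^*\hat{V},
\]
partitioned conformally with blocks of sizes $r$ and $n-r$. Since $\hat{V}\hat{V}^*=I_n$, we have
\[
\hat{A}=\hat{V}\,\mathrm{diag}(\Sigma_1,\Sigma_2)\,\hat{W}^*\hat{V}\,\hat{V}^*=\hat{U}\,\mathrm{diag}(\Sigma_1,\Sigma_2)\begin{bmatrix}K&L\\M&N\end{bmatrix}\hat{U}^*.
\]
Multiplying the block matrices gives \eqref{F1}. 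The matrix $\hat{W}^*\hat{V}$ is a product of dual quaternion unitary matrices, so it is unitary, using $(\hat{A}\hat{B})^*=\hat{B}^*\hat{A}^*$. Writing out the block entries of $\hat{W}^*\hat{V}(\hat{W}^*\hat{V})^*=I_n$ yields exactly \eqref{SS}.

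\textbf{Splitting into standard and infinitesimal parts.} Write $\Sigma_1=\Sigma_{1s}+\epsilon\Sigma_{1d}$ and $K=K_1+\epsilon K_2$, and similarly for $L$, $M$, $N$. Because every entry of $\Sigma_2$ is infinitesimal, $\Sigma_2=\epsilon\Sigma_{2d}$. Using $\epsilon^2=0$, we get $\Sigma_2M=\epsilon\Sigma_{2d}M_1$ and $\Sigma_2N=\epsilon\Sigma_{2d}N_1$. Expanding $\Sigma_1K$ and $\Sigma_1L$ the same way gives \eqref{ab}. Finally, compare standard and $\epsilon$-parts in \eqref{SS}:
\begin{itemize}
\item The first identity gives both relations in \eqref{S1}.
\item The standard parts of the second and third identities give \eqref{S2}.
\end{itemize}

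\textbf{Main obstacle.} The only substantive step is the existence of the dual SVD with the appreciable singular values ordered first and exactly $r$ of them. If this has to be justified rather than cited, I would start from Lemma \ref{L01} applied to the Hermitian matrix $\hat{A}\hat{A}^*$. Its appreciable eigenvalues correspond to $\mu_i^2$ with $\mu_i$ appreciable. The delicate part is the infinitesimal singular values: square roots of infinitesimal eigenvalues are not defined in $\mathbb{D}$. So I would instead build the infinitesimal block directly from the SVD of the projected infinitesimal part $(I-P)A_d(I-Q)$, where $P$ and $Q$ are the orthogonal projectors onto the range of $A_s$ and of $A_s^*$, and assemble the two pieces. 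The rest of the argument is the routine bookkeeping above.
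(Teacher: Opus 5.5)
Your proposal is correct and follows the standard route to the dual Hartwig--Spindelb\"ock decomposition; the paper itself only cites Mei et al.\ for this lemma. The route is: take the dual quaternion SVD $\hat{A}=\hat{V}\,\mathrm{diag}(\Sigma_1,\Sigma_2)\hat{W}^*$ of Qi and Luo, set $\hat{U}=\hat{V}$, partition the unitary matrix $\hat{W}^*\hat{V}$ into $K,L,M,N$, and read off (\ref{SS})--(\ref{S2}) from its unitarity and from $\epsilon^2=0$.

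One small correction to your fallback remark: since $\Sigma_2^2=0$, one has $\hat{A}\hat{A}^*=\hat{V}\,\mathrm{diag}(\Sigma_1^2,0)\hat{V}^*$, so the infinitesimal singular values vanish entirely in $\hat{A}\hat{A}^*$ and there are no nonzero infinitesimal eigenvalues needing square roots. That is why they must be extracted from the compression $(I-P)A_d(I-Q)$, as you propose.
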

Another essential fact is that the dual quaternion matrices $K$, $L$, $M$  and  $N$ in (\ref{SS}) satisfy
\begin{align}\label{ZXC}
  L=0 \quad \Rightarrow \quad KK^*=K^*K=I_r, \ \ M=0 \ \ \text{and} \ \ NN^*=N^*N=I_{n-r}. 
\end{align}
Meanwhile, from (\ref{S1}) and (\ref{S2}), we obtain the following fact concerning the quaternion matrices $K_1$, $K_2$, $L_1$, $M_1$  and  $N_1$:
\begin{align}
  L_1=0 \ \ \Rightarrow \ \ &K_1K_1^*=K_1^*K_1=I_r \ \ \ \ \ \text{and} \ \ K_1K_2^*+K_2K_1^*=0, \label{ZXC1}\\  
  & N_1N_1^*=N_1^*N_1=I_{n-r} \ \ \text{and} \ \ M_1=0 . \label{ZXC2}
\end{align}

\begin{remark}\label{RR}
    Let $\hat{A} \in \mathbb{DQ}^{n \times n}$ satisfy (\ref{F1}). Then
\begin{align}
\hat{A}_e  
&=\hat{U}\begin{bmatrix}
\Sigma_1 K & \Sigma_1 L\\
0 & 0
\end{bmatrix}\hat{U}^*  \label{Ee} \\
&= \hat{U} \left(\begin{bmatrix}
\Sigma_{\mathrm{1s}} K_{1} & \Sigma_{\mathrm{1s}} L_{1} \\ 0 & 0
\end{bmatrix} +\epsilon \begin{bmatrix}
\Sigma_{\mathrm{1d}} K_{1}+\Sigma_{\mathrm{1s}} K_{2} & \Sigma_{\mathrm{1d}} L_{1}+\Sigma_{\mathrm{1s}} L_{2} \\
 0 & 0
\end{bmatrix} \right)\hat{U}^*. \label{abbe}
\end{align}
\end{remark}

Also, the NDMPI can be represented by the dual Hartwig-Spindelb\"ock decomposition.

\begin{lemma}\label{LN}\emph{(Mei et al. \citeyear{mei1})}
  Let $\hat{A} \in \mathbb{DQ}^{n \times n}$ with $r=\operatorname{ARank}(\hat{A})$. Then there exists $\hat{U} \in \mathbb{DQ}_n^{U}$ such that
\begin{align}
\hat{A}\sp{\scriptscriptstyle N} 
&=\hat{U}\begin{bmatrix}
  K^* \Sigma_1^{-1}  &  0 \\ L^* \Sigma_1^{-1} & 0
\end{bmatrix}\hat{U}^* \label{FQ} \\
&=  \hat{U}\left(\begin{bmatrix}
K_{1}^* \Sigma_{\mathrm{1s}}^{-1}  &  0 \\
L_{1}^* \Sigma_{\mathrm{1s}}^{-1} & 0
\end{bmatrix} + \epsilon \begin{bmatrix}
K_{2}^*\Sigma_{\mathrm{1s}}^{-1}-K_{1}^* \Sigma_{\mathrm{1s}}^{-2}\Sigma_{\mathrm{1d}}   & 0  \\
L_{2}^*\Sigma_{\mathrm{1s}}^{-1}-L_{1}^* \Sigma_{\mathrm{1s}}^{-2}\Sigma_{\mathrm{1d}}   & 0
\end{bmatrix} \right)\hat{U}^*, \label{NN}
\end{align} 
where $K=K_1+\epsilon K_2 \in \mathbb{DQ}^{r \times r}$, $L=L_1+\epsilon L_2 \in \mathbb{DQ}^{r \times (n-r)}$ satisfy 
$KK^* + LL^* = I_r$, 
and $\Sigma_1 = \Sigma_{\mathrm{1s}} + \epsilon \Sigma_{\mathrm{1d}} = \operatorname{diag}(\mu_1, \ldots, \mu_r)$ with $\mu_1 \geq \cdots \geq \mu_r$ being positive appreciable dual numbers.
Furthermore,  $\hat{A}^{\dagger}$ exists whenever $\Sigma_2=0$,  in which case $\hat{A}^{\dagger}=\hat{A}\sp{\scriptscriptstyle N}$ holds. 
\end{lemma}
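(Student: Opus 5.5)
The plan is to take the dual Hartwig--Spindelb\"ock decomposition (\ref{F1}) of $\hat{A}$ as given, write down the candidate
$$\hat{X}=\hat{U}\begin{bmatrix} K^*\Sigma_1^{-1} & 0\\ L^*\Sigma_1^{-1} & 0\end{bmatrix}\hat{U}^*,$$
and check directly that it satisfies the four defining equations (\ref{FF1}). Uniqueness of the NDMPI (Cui and Qi) then gives $\hat{X}=\hat{A}\sp{\scriptscriptstyle N}$.

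\textbf{Well-definedness and the key commutation.} Since $\mu_1,\dots,\mu_r$ are positive appreciable dual numbers, they are invertible, so $\Sigma_1^{-1}=\operatorname{diag}(\mu_1^{-1},\dots,\mu_r^{-1})$ exists. Its entries are real dual numbers. These commute with every dual quaternion, so $\Sigma_1$ and $\Sigma_1^{-1}$ can be moved past the quaternion factors $K,L,M,N$ wherever needed. This is the one place where noncommutativity of $\mathbb{Q}$ could cause trouble, so I will check it at each step.

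\textbf{Verifying the four equations.} Because $\hat{U}^*\hat{U}=I_n$, every product reduces to a product of the middle block matrices.
\begin{itemize}
\item For $\hat{A}\hat{X}$, the middle block is
$$\begin{bmatrix}\Sigma_1(KK^*+LL^*)\Sigma_1^{-1} & 0\\ \Sigma_2(MK^*+NL^*)\Sigma_1^{-1} & 0\end{bmatrix}.$$
By (\ref{SS}) we have $KK^*+LL^*=I_r$, and $MK^*+NL^*=(KM^*+LN^*)^*=0$. Hence $\hat{A}\hat{X}=\hat{U}\,\mathrm{diag}(I_r,0)\,\hat{U}^*$, which is Hermitian.
\item For $\hat{X}\hat{A}$, only the first block row of $\hat{A}$ survives, because the second block column of the middle matrix of $\hat{X}$ is zero. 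Using $\Sigma_1^{-1}\Sigma_1=I_r$, this gives
$$\hat{X}\hat{A}=\hat{U}\begin{bmatrix}K^*K & K^*L\\ L^*K & L^*L\end{bmatrix}\hat{U}^*=\hat{U}\begin{bmatrix}K^*\\ L^*\end{bmatrix}\begin{bmatrix}K & L\end{bmatrix}\hat{U}^*,$$
which is visibly Hermitian.
\item Next, $\hat{A}\hat{X}\hat{A}=\hat{U}\,\mathrm{diag}(I_r,0)\,\hat{U}^*\hat{A}$. This equals $\hat{U}\begin{bmatrix}\Sigma_1K & \Sigma_1L\\ 0&0\end{bmatrix}\hat{U}^*$, which is $\hat{A}_e$ by Remark \ref{RR}.
\item Finally, $\hat{X}\hat{A}\hat{X}=\hat{X}(\hat{A}\hat{X})=\hat{X}\hat{U}\,\mathrm{diag}(I_r,0)\,\hat{U}^*=\hat{X}$, since the middle matrix of $\hat{X}$ has zero second block column.
\end{itemize}

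\textbf{Componentwise form (\ref{NN}).} I will substitute $K=K_1+\epsilon K_2$, $L=L_1+\epsilon L_2$ and $\Sigma_1^{-1}=\Sigma_{1s}^{-1}-\epsilon\Sigma_{1s}^{-2}\Sigma_{1d}$, the latter following from the scalar dual inverse formula entrywise. Expanding and using $\epsilon^2=0$ then gives (\ref{NN}).

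\textbf{The case $\Sigma_2=0$.} Here the second block row of $\hat{A}$ vanishes, so $\hat{A}=\hat{A}_e$. The first equation in (\ref{FF1}) then becomes $\hat{A}\hat{X}\hat{A}=\hat{A}$, so $\hat{X}$ satisfies (\ref{FF}) and $\hat{A}^\dagger$ exists. The usual uniqueness argument for the Moore--Penrose equations uses only associativity and the involution $(\hat{A}\hat{B})^*=\hat{B}^*\hat{A}^*$, so it carries over to $\mathbb{DQ}$. Hence $\hat{A}^\dagger=\hat{A}\sp{\scriptscriptstyle N}$.

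\textbf{Main obstacle.} The delicate point is the identification of $\hat{A}_e$ in Remark \ref{RR} with the essential part, i.e.\ the best rank-$r$ approximation. I will take this as given from Remark \ref{RR}. If it had to be justified, I would compare the decomposition (\ref{F1}) with the dual SVD of Qi and Luo, where the infinitesimal singular values sit exactly in $\Sigma_2$.
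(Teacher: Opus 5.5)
Your proposal is correct. The paper gives no proof of this lemma; it is quoted from Mei et al. Your direct check of the four equations in (\ref{FF1}), using the decomposition (\ref{F1}), the relations (\ref{SS}) and Remark~\ref{RR}, followed by an appeal to uniqueness, is the standard argument. You also handle the case $\Sigma_2=0$ correctly via $\hat{A}=\hat{A}_e$.

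If you want to avoid relying on Cui and Qi for uniqueness over $\mathbb{DQ}$, it follows from associativity and the involution alone. For two solutions $\hat{X},\hat{Y}$ one gets $(\hat{A}\hat{X})(\hat{A}\hat{Y})=\hat{A}_e\hat{Y}=\hat{A}\hat{Y}$. Swapping the roles of $\hat{X}$ and $\hat{Y}$ and taking adjoints then gives $\hat{A}\hat{X}=\hat{A}\hat{Y}$. The same argument gives $\hat{X}\hat{A}=\hat{Y}\hat{A}$, whence $\hat{X}=\hat{X}\hat{A}\hat{X}=\hat{Y}\hat{A}\hat{Y}=\hat{Y}$.
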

\vspace{5pt}
Mei et al. (\citeyear{Mei2}) established a fundamental lemma on orthogonal tripotent matrices. For the study of dual orthogonal tripotent matrices, an analogous result for dual quaternions is needed and is provided below. 
It is noteworthy that this result is not merely a direct corollary of the previously mentioned lemma.

 \begin{theorem}\label{ll}
  Let $K \in \mathbb{DQ}^{r \times r}$ and $\Sigma_1 = \text{diag}(\sigma_1, \sigma_2, \ldots, \sigma_r)$ be a diagonal matrix such that each $ \sigma_{i} $ is a positive appreciable dual number for $i = 1,\ldots, r$. 
  Then $\Sigma_1 K = K \Sigma_1$ if and only if one of the following conditions holds:\\
\noindent $(a)$ $\Sigma_1^p K = K \Sigma_1^p$ for $p \in \mathbb{Z}\setminus\{0\}$;\\
\noindent $(b)$ $(\Sigma_1^p + \Sigma_1^q) K = K (\Sigma_1^p + \Sigma_1^q)$ for $p,q \in \mathbb{Z}$ with $pq \geq 0$ and $(p,q) \neq (0,0)$.
\end{theorem}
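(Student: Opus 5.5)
The plan is to reduce everything to entrywise scalar conditions. Write $K=(k_{ij})$ with $k_{ij}\in\mathbb{DQ}$. Since each $\sigma_i$ is a dual number and dual numbers commute with dual quaternions, for any diagonal $D=\operatorname{diag}(d_1,\ldots,d_r)$ with $d_i\in\mathbb{D}$ we have $(DK-KD)_{ij}=(d_i-d_j)k_{ij}$. Hence $\Sigma_1K=K\Sigma_1$ is equivalent to $(\sigma_i-\sigma_j)k_{ij}=0$ for all $i,j$. Likewise, (a) is equivalent to $(\sigma_i^p-\sigma_j^p)k_{ij}=0$, and (b) to $(\sigma_i^p+\sigma_i^q-\sigma_j^p-\sigma_j^q)k_{ij}=0$.

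The necessity direction is immediate. If $\Sigma_1$ commutes with $K$, so does $\Sigma_1^{-1}$, which exists because each $\sigma_i$ is appreciable. Therefore every integer power, and every sum of integer powers, commutes with $K$.

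For sufficiency, fix $i,j$ and write $\sigma_i=a+\epsilon b$, $\sigma_j=c+\epsilon d$ with $a,c>0$, and $k_{ij}=k_s+\epsilon k_d$. The key step is to show that for $f(x)=x^p$ (with $p\ne0$) or $f(x)=x^p+x^q$ (with $pq\ge0$, not both zero), the equation $(f(\sigma_i)-f(\sigma_j))k_{ij}=0$ forces $(\sigma_i-\sigma_j)k_{ij}=0$. Using $f(a+\epsilon b)=f(a)+\epsilon f'(a)b$, which holds for negative exponents too via $\hat a^{-1}=a^{-1}-\epsilon a^{-2}b$, the equation splits into two parts:
\begin{align*}
&(f(a)-f(c))k_s=0,\\
&(f(a)-f(c))k_d+(f'(a)b-f'(c)d)k_s=0.
\end{align*}
Here $f$ restricted to $(0,\infty)$ is strictly monotone. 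This is clear for $x^p$ with $p\neq0$. For $x^p+x^q$ with $pq\ge0$, both exponents are $\ge0$ or both $\le0$ and at least one is nonzero, so the sum is strictly increasing or strictly decreasing. Moreover, $f'(x)\ne0$ on $(0,\infty)$, since for the sum $f'=px^{p-1}+qx^{q-1}$ consists of terms of the same sign, not both zero.

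The case analysis then runs as follows. If $a\ne c$, strict monotonicity gives $f(a)\neq f(c)$. The first equation then gives $k_s=0$, and the second gives $k_d=0$, so $k_{ij}=0$. If $a=c$, the first equation is trivial and the second becomes $f'(a)(b-d)k_s=0$. Since $f'(a)\ne0$, this yields $(b-d)k_s=0$, and this is exactly the condition $(\sigma_i-\sigma_j)k_{ij}=\epsilon(b-d)k_s=0$. In either case $(\sigma_i-\sigma_j)k_{ij}=0$, as required.

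The main obstacle is this $a=c$ infinitesimal case. There one needs the nonvanishing of $f'$ rather than mere injectivity of $f$, which is why the condition $pq\ge0$ is needed. For example, with $p=1,q=-1$ we have $f'(1)=0$, and the implication genuinely fails at $a=c=1$. I would also check carefully that the dual-number power rule extends to negative $p$, which follows by applying the positive-power formula to $\hat a^{-1}$.
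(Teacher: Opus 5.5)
Your proof is correct and takes essentially the same route as the paper: you reduce to the entrywise conditions $(f(\sigma_i)-f(\sigma_j))k_{ij}=0$, split them into standard and infinitesimal parts, and use the positivity of the standard parts of the $\sigma_i$. The difference is only in packaging: you use the rule $f(a+\epsilon b)=f(a)+\epsilon f'(a)b$ together with strict monotonicity and $f'\neq 0$ on $(0,\infty)$, which handles negative, mixed-zero, and same-sign exponents uniformly, whereas the paper splits on whether $(k_{ij})_s$ vanishes, uses the explicit factorization of $\sigma_{i,s}^p-\sigma_{j,s}^p$ for $p>0$, and treats the remaining sign cases by inversion or ``similarly''.
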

\begin{proof}
Necessity is evident; it remains to prove sufficiency. Let 
$$K = (k_{ij})_{r \times r} \ \text{with} \ k_{ij}=(k_{ij})_s +\epsilon (k_{ij})_d \in \mathbb{DQ}, \quad  \sigma_i=\sigma_{i,s} +\epsilon \sigma_{i,d} \in \mathbb{D}, $$ 
where $(k_{ij})_s, (k_{ij})_d \in \mathbb{Q}$ and $\sigma_{i,s}, \sigma_{i,d} \in \mathbb{R}$ for \(i,j=1,\dots,r\).
To show that $\Sigma_1 K = K\Sigma_1$, it suffices to verify
\begin{equation}\label{1}
\sigma_i k_{ij} = k_{ij} \sigma_j \quad (i,j = 1,\ldots, r). 
\end{equation}
\noindent $(a):$ Since $\Sigma_1^p K = K\Sigma_1^p$ holds if and only if $\Sigma_1^{-p} K = K\Sigma_1^{-p}$, we may assume $p > 0$.  \\

If $k_{ij} \neq 0$, then from $\Sigma_1^p K = K\Sigma_1^p$ it follows that
\begin{align}\label{7}
\sigma_i^p k_{ij} = k_{ij}\sigma_j^p \quad \Rightarrow \quad (\sigma_i^p - \sigma_j^p)k_{ij}=0.
\end{align}
\noindent For $p>0$, we have
\begin{align}
\sigma_i^p - \sigma_j^p &= \sigma_{i,s}^p - \sigma_{j,s}^p +\epsilon p(\sigma_{i,s}^{p-1}\sigma_{i,d}-\sigma_{j,s}^{p-1}\sigma_{j,d}) \notag \\ 
&= (\sigma_{i,s} - \sigma_{j,s})\left(\sum_{k=0}^{p-1}\sigma_{i,s}^{p-1-k}\sigma_{j,s}^{k}\right) +\epsilon p(\sigma_{i,s}^{p-1}\sigma_{i,d}-\sigma_{j,s}^{p-1}\sigma_{j,d}), \label{9}
\end{align}

If $(k_{ij})_s \neq 0$, (\ref{7})  implies $\sigma_i^p - \sigma_j^p=0$.
The positivity of $\sigma_{i,s},\sigma_{j,s}$ in (\ref{9}) then yields  $\sigma_{i,s} = \sigma_{j,s}$, which further leads to $\sigma_{i,d} = \sigma_{j,d}$. 
 Consequently, $\sigma_i = \sigma_j$, satisfying (\ref{1}).

If $(k_{ij})_s = 0$ but $k_{ij} \neq 0$, i.e., $k_{ij}=d\epsilon$, where $d \in \mathbb{Q}$ and $d \neq 0$,
(\ref{7}) reduces to
$$(\sigma_i^p - \sigma_j^p)k_{ij}=(\sigma_{i,s} - \sigma_{j,s})\left(\sum_{k=0}^{p-1}\sigma_{i,s}^{p-1-k}\sigma_{j,s}^{k}\right)d\epsilon=0.$$
The positivity of $\sigma_{i,s},\sigma_{j,s}$ implies $\sigma_{i,s} = \sigma_{j,s}$,
leading to 
\begin{align}\label{99}
(\sigma_{i,s}+\sigma_{i,d}\epsilon)d\epsilon = d\epsilon(\sigma_{j,s}+\sigma_{j,d}\epsilon).
\end{align}
As a result,  $\sigma_i k_{ij} = k_{ij} \sigma_j$, and (\ref{1}) is satisfied.

\noindent $(b):$ If $k_{ij} \neq 0$, then  $(\Sigma_1^p + \Sigma_1^q) K = K (\Sigma_1^p + \Sigma_1^q)$ implies that
\begin{equation}\label{2}
(\sigma_i^p+\sigma_i^q)k_{ij} = k_{ij}(\sigma_j^p+\sigma_j^q)
\quad \Rightarrow \quad (\sigma_i^p - \sigma_j^P + \sigma_i^q - \sigma_j^q)k_{ij} = 0. 
\end{equation}
\noindent For $p,q>0$, we have
\begin{align}
\sigma_i^p - \sigma_j^p + \sigma_i^q - \sigma_j^q &= (\sigma_{i,s} - \sigma_{j,s})\left[ \left(\sum_{k=0}^{p-1}\sigma_{i,s}^{p-1-k}\sigma_{j,s}^{k}\right)+ \left(\sum_{k=0}^{q-1}\sigma_{i,s}^{q-1-k}\sigma_{j,s}^{k}\right)\right] \notag \\
&+ \left[p(\sigma_{i,s}^{p-1}\sigma_{i,d}-\sigma_{j,s}^{p-1}\sigma_{j,d}) + q(\sigma_{i,s}^{q-1}\sigma_{i,d}-\sigma_{j,s}^{q-1}\sigma_{j,d})\right]\epsilon. \label{90}
\end{align}

If $(k_{ij})_s \neq 0$, (\ref{2}) yields $\sigma_i^p - \sigma_j^p + \sigma_i^q - \sigma_j^q=0$.
The positivity of $\sigma_{i,s},\sigma_{j,s}$ in (\ref{90}) implies $\sigma_{i,s} = \sigma_{j,s}$, and then 
$$ 
p\sigma_{i,s}^{p-1}(\sigma_{i,d} - \sigma_{j,d}) + q\sigma_{i,s}^{q-1}(\sigma_{i,d} - \sigma_{j,d}) = 0.  
$$
Therefore, it follows that
$$
(p\sigma_{i,s}^{p-1}+q\sigma_{i,s}^{q-1})(\sigma_{i,d} - \sigma_{j,d}) = 0 \ \ \Rightarrow \ \ \sigma_{i,d} = \sigma_{j,d}.
$$
Hence, $\sigma_i = \sigma_j$, ensuring (\ref{1}). Similarly for $p,q< 0$, (\ref{1}) holds.

If $(k_{ij})_s = 0$ but $k_{ij} \neq 0$, i.e., $k_{ij}=d\epsilon$, where  $d \in \mathbb{Q}$ and $d \neq 0$. 
For $p,q> 0$, it follows from (\ref{2}) and (\ref{90}) that
$$ 
(\sigma_i^p - \sigma_j^p + \sigma_i^q - \sigma_j^q)k_{ij}=(\sigma_{i,s} - \sigma_{j,s})\left[ \left(\sum_{k=0}^{p-1}\sigma_{i,s}^{p-1-k}\sigma_{j,s}^{k}\right)+ \left(\sum_{k=0}^{q-1}\sigma_{i,s}^{q-1-k}\sigma_{j,s}^{k}\right)\right]d\epsilon=0.
$$
Given the positivity of $\sigma_{i,s},\sigma_{j,s}$, we have $\sigma_{i,s} = \sigma_{j,s}$. A similar result holds for $p,q< 0$.
Thus, (\ref{99}) holds, and hence (\ref{1}) is valid. 

The case where $k_{ij}=0$ is trivial. The cases with $p=0$ or $q=0$ reduce to case $(a)$.
Therefore, (\ref{1}) holds for all $p,q\in\mathbb{Z}$ with $pq\geq 0$ and $(p,q)\neq(0,0)$. 
\end{proof}

Analogously to Theorem~\ref{ll}, we can also derive the result for quaternion matrices, but omit its proof here for brevity.
\begin{corollary}\label{C2.7}
  Let $K_1 \in \mathbb{Q}^{r \times r}$ and $\Sigma_{\mathrm{1s}} = \text{diag}(a_1, a_2, \ldots, a_r)$ be a diagonal matrix such that each $ a_{i} $ is a positive real number for $i = 1,\ldots, r$. 
  Then $\Sigma_{\mathrm{1s}} K_1 = K_1 \Sigma_{\mathrm{1s}}$ if and only if one of the following conditions holds:\\
\noindent $(a)$ $\Sigma_{\mathrm{1s}}^p K_1 = K_1 \Sigma_{\mathrm{1s}}^p$ for $p \in \mathbb{Z}\setminus\{0\}$;\\
\noindent $(b)$ $(\Sigma_{\mathrm{1s}}^p + \Sigma_{\mathrm{1s}}^q) K_1 = K_1 (\Sigma_{\mathrm{1s}}^p + \Sigma_{\mathrm{1s}}^q)$ for $p,q \in \mathbb{Z}$ with $pq \geq 0$ and $(p,q) \neq (0,0)$.
\end{corollary}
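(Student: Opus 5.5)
The plan is to reduce both conditions to an entrywise statement, exactly as in the proof of Theorem~\ref{ll}, but now over the reals, so the infinitesimal parts disappear. Necessity is immediate: if $\Sigma_{\mathrm{1s}}K_1=K_1\Sigma_{\mathrm{1s}}$, then $K_1$ commutes with every integer power of $\Sigma_{\mathrm{1s}}$, since $\Sigma_{\mathrm{1s}}$ is invertible. It therefore also commutes with any sum of such powers.

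For sufficiency, write $K_1=(k_{ij})$ with $k_{ij}\in\mathbb{Q}$. Since the $a_i$ are real and hence central in $\mathbb{Q}$, the $(i,j)$ entry of $f(\Sigma_{\mathrm{1s}})K_1-K_1f(\Sigma_{\mathrm{1s}})$ equals $(f(a_i)-f(a_j))k_{ij}$ for any real function $f$. Here $f(x)=x^p$ in case $(a)$ and $f(x)=x^p+x^q$ in case $(b)$. Also, $\mathbb{Q}$ is a division ring, so a product of a nonzero real number and a quaternion vanishes only if the quaternion vanishes.

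The key step is therefore to show that $f(a_i)=f(a_j)$ forces $a_i=a_j$, i.e.\ that $f$ is injective on $(0,\infty)$. Granting this, whenever $k_{ij}\neq 0$ the hypothesis gives $f(a_i)=f(a_j)$, hence $a_i=a_j$ and $a_ik_{ij}=k_{ij}a_j$. When $k_{ij}=0$ this identity is trivial, so $\Sigma_{\mathrm{1s}}K_1=K_1\Sigma_{\mathrm{1s}}$.

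The injectivity is the only real content, and the main obstacle is handling the sign cases cleanly.
\begin{itemize}
\item In case $(a)$ with $p\neq 0$, the map $x\mapsto x^p$ is strictly monotone on $(0,\infty)$: increasing for $p>0$ and decreasing for $p<0$.
\item In case $(b)$ with $p,q\ge 0$ and not both zero, $x^p+x^q$ is a sum of a nondecreasing and a strictly increasing function, hence strictly increasing. A zero exponent contributes only a constant.
\item In case $(b)$ with $p,q\le 0$ and not both zero, the same argument gives a strictly decreasing function.
\end{itemize}
Alternatively, one can factor $a_i^p-a_j^p=(a_i-a_j)\sum_k a_i^{p-1-k}a_j^k$ as in (\ref{9}) and (\ref{90}). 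The bracketed sum is positive, which gives $a_i=a_j$; negative exponents are handled by replacing $a$ with $a^{-1}$.

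This mirrors the proof of Theorem~\ref{ll} with the dual parts set to zero. Indeed, the corollary also follows directly from that theorem by viewing $K_1$ and $\Sigma_{\mathrm{1s}}$ as dual quaternion matrices with zero infinitesimal part. Then $\Sigma_{\mathrm{1s}}$ has positive appreciable diagonal entries, and equality of such dual matrices is equivalent to equality of their standard parts. I would present this reduction as the short proof.
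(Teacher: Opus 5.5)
Your proposal is correct and takes essentially the paper's approach. The paper omits this proof, saying only that it is analogous to Theorem~\ref{ll}, whose argument is the same entrywise reduction $(f(a_i)-f(a_j))k_{ij}=0$ followed by using positivity to force $a_i=a_j$; both your monotonicity version and your shortcut of viewing $K_1$ and $\Sigma_{\mathrm{1s}}$ as dual matrices with zero infinitesimal part to invoke Theorem~\ref{ll} directly are valid.
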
 
\vspace{5pt}
For the dual diagonal matrix $\Sigma_1$, we present the following theorem, which will be frequently used throughout the proofs.
\begin{theorem}\label{QQ9}
  Let $\Sigma_1 = \text{diag}(\sigma_1, \sigma_2, \ldots, \sigma_r)$ be a diagonal matrix such that each $ \sigma_{i} $ is a positive appreciable dual number for $i = 1,\ldots, r$.
Thus, when  $n \in \mathbb{Z}\setminus\{0\}$,
  $$ \Sigma_{1}^{n}=I_r \quad \Leftrightarrow \quad \Sigma_1=I_r.$$
\end{theorem}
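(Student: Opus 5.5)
Since $\Sigma_1$ is diagonal, the statement reduces entrywise to a claim about a single positive appreciable dual number. The sufficiency direction $\Sigma_1=I_r\Rightarrow\Sigma_1^n=I_r$ is immediate. For necessity we have $\Sigma_1^n=\mathrm{diag}(\sigma_1^n,\ldots,\sigma_r^n)$, so $\Sigma_1^n=I_r$ means $\sigma_i^n=1$ for every $i$. It therefore suffices to show that a dual number $\sigma=\sigma_s+\epsilon\sigma_d$ with $\sigma_s>0$ and $\sigma^n=1$ for some $n\in\mathbb{Z}\setminus\{0\}$ must equal $1$.

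First we reduce to $n>0$. Each $\sigma_i$ is appreciable and hence invertible, so $\Sigma_1$ is invertible. If $n<0$, then $\Sigma_1^n=I_r$ is equivalent to $\Sigma_1^{-n}=(\Sigma_1^{n})^{-1}=I_r$, where $-n>0$. This is the same reduction used in the proof of Theorem~\ref{ll}(a).

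For $n>0$ we use the power formula from the preliminaries, $\sigma^n=\sigma_s^n+\epsilon\, n\sigma_s^{n-1}\sigma_d$. Equating standard and infinitesimal parts with those of $1=1+\epsilon 0$ gives two equations:
\begin{align*}
\sigma_s^n=1, \qquad n\sigma_s^{n-1}\sigma_d=0.
\end{align*}
Since $\sigma_s$ is a positive real number, the first equation forces $\sigma_s=1$. This is the only step needing care, because it is exactly where positivity (appreciable and positive) is used to exclude $\sigma_s=-1$ when $n$ is even. With $\sigma_s=1$ and $n\neq 0$, the second equation becomes $n\sigma_d=0$, so $\sigma_d=0$.

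Hence $\sigma_i=1$ for all $i$, and therefore $\Sigma_1=I_r$. No real obstacle is expected; the argument only requires keeping the reduction from negative to positive exponents explicit.
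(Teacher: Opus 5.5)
Your proof is correct and follows essentially the same route as the paper: work entrywise, expand $\sigma_i^n=a_i^n+\epsilon\, n a_i^{n-1}b_i$, and use $a_i>0$ to get $a_i=1$ and hence $b_i=0$. Your explicit reduction from negative to positive $n$ is a small extra step the paper skips; the paper applies the power formula directly, which is justified because that formula also holds for negative integer exponents.
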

\begin{proof}
  Let $\sigma_i=a_i+\epsilon b_i$ with $a_i,b_i \in \mathbb{R}$ and $a_i>0$.
 Since  $\Sigma_1^n=I_r$, we obtain that
 $$ a_i^n+ \epsilon n a_i^{n-1}b_i=1 \ \ \Leftrightarrow  \ \ a_i^n=1 \ \ \text{and} \ \ a_i^{n-1}b_i=0.$$
This implies $a_i=1$, whence $b_i=0$.
It therefore follows that $\sigma_i=1$, so $\Sigma_1=I_r$.
Conversely, the converse holds.
\end{proof}

\section{Properties of dual orthogonal tripotent matrices}\label{333}

First, we consider dual orthogonal $k$-idempotent matrices (i.e., $\hat{A}^k=\hat{A}=\hat{A}^*$, where $k \in \mathbb{Z}$ and $k\geqslant 2$).
Since $\hat{A}=\hat{A}^*$, by Lemma~\ref{L01}, there exists \( \hat{U} \in \mathbb{DQ}_n^U  \) such that 
$$\hat{U}^* \hat{A} \hat{U}=\Sigma=\mathrm{diag} (\lambda_1, \cdots, \lambda_r, 0, \cdots, 0) ,$$
where $\lambda_i \in \mathbb{D}$ and $\lambda_i \neq 0$, $i=1,2,\cdots,r$.
Furthermore, from $\hat{A}^k=\hat{A}$ we obtain that $\lambda_i^k=\lambda_i$.
Let $\lambda_i=a_i+\epsilon b_i$ with $a_i,b_i \in \mathbb{R}$. This leads to 
$$\lambda_i^k = \lambda_i  \quad \Leftrightarrow  \quad   a_i^k + \epsilon k a_i^{k-1} b_i = a_i+\epsilon b_i,$$
so $a_i^k=a_i$ and $k a_i^{k-1} b_i=b_i$. 
It follows that $a_i$ can be $\pm 1$. 
Then, according to the second equation, regardless of whether $a_i$ is $1$ or $-1$ we obtain $b_i=0$.
Hence, $\lambda_i=\pm 1$.
That is,
$$\hat{A}^k=\hat{A}=\hat{A}^* \ \ \Leftrightarrow \ \ \hat{A} =\hat{U}\mathrm{diag} (1, \ldots, 1, -1, \ldots, -1,0, \ldots, 0)\hat{U}^* .$$
It follows that
\begin{itemize}
    \item if $k$ is even, then $\hat{A}$ is dual unitarily similar to $\operatorname{diag}(1,\dots,1,0,\dots,0)$;
    \item if $k$ is odd, then $\hat{A}$ is dual unitarily similar to $\operatorname{diag}(1,\dots,1,-1,\dots,-1,0,\dots,0)$.
\end{itemize}

Accordingly, the study of dual orthogonal $k$-idempotent matrices  reduces to that of dual orthogonal tripotent matrices.
It is straightforward to verify that the following theorem holds.

\begin{theorem}\label{L2}
  If $\hat{A}$ is a dual orthogonal tripotent matrix, then there exists \( \hat{U} \in \mathbb{DQ}_n^U \) such that
\begin{align}\label{E1}
\hat{A}\in\mathbb{DQ}_n^{3\text{-}OP} \ \ \Leftrightarrow \ \ \hat{A} = \hat{U} \, \mathrm{diag}(\underbrace{1, \ldots, 1}_{f}, \underbrace{-1, \ldots, -1}_{g}, \underbrace{0, \ldots, 0}_{h}) \hat{U}^*,
\end{align}
for some nonnegative integers $f,g, h$ such that $f+g+h=n$.
Conversely, the converse also holds.
\end{theorem}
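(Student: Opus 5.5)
We prove the two directions separately; the forward direction reuses the eigenvalue computation from the discussion preceding the statement with $k=3$.

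\emph{Necessity.} Let $\hat{A}\in\mathbb{DQ}_n^{3\text{-}OP}$. Since $\hat{A}=\hat{A}^*$, Lemma~\ref{L01} gives $\hat{U}\in\mathbb{DQ}_n^U$ with $\hat{U}^*\hat{A}\hat{U}=\Sigma=\mathrm{diag}(\lambda_1,\ldots,\lambda_r,0,\ldots,0)$, where $\lambda_i\in\mathbb{D}$ and $\lambda_i\neq 0$. Because $\hat{U}^*\hat{U}=\hat{U}\hat{U}^*=I_n$, we have $\hat{U}^*\hat{A}^3\hat{U}=\Sigma^3$, so $\hat{A}^3=\hat{A}$ is equivalent to $\Sigma^3=\Sigma$, that is, $\lambda_i^3=\lambda_i$ for each $i$.

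Write $\lambda_i=a_i+\epsilon b_i$ with $a_i,b_i\in\mathbb{R}$. Then $\lambda_i^3=\lambda_i$ becomes $a_i^3=a_i$ and $3a_i^2b_i=b_i$.
\begin{itemize}
\item If $a_i=\pm1$, the second equation gives $2b_i=0$, so $b_i=0$ and $\lambda_i=\pm1$.
\item If $a_i=0$, the second equation forces $b_i=0$, so $\lambda_i=0$, contradicting $\lambda_i\neq0$.
\end{itemize}
This last case is the one step needing care: it is where the nonzero right eigenvalues of Lemma~\ref{L01} turn out to have appreciable standard part. Hence every $\lambda_i\in\{1,-1\}$.

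Let $P$ be the real permutation matrix that places the entries equal to $1$ first, then those equal to $-1$, then the zeros. $P$ is unitary, so $\hat{U}P\in\mathbb{DQ}_n^U$ and $(\hat{U}P)^*\hat{A}(\hat{U}P)=\mathrm{diag}(I_f,-I_g,0_h)$. Here $f$ and $g$ count the eigenvalues $1$ and $-1$, $h=n-r$, and $f+g+h=n$. Replacing $\hat{U}$ by $\hat{U}P$ gives (\ref{E1}).

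\emph{Sufficiency.} Suppose $\hat{A}=\hat{U}D\hat{U}^*$ with $D=\mathrm{diag}(I_f,-I_g,0_h)$ and $\hat{U}\in\mathbb{DQ}_n^U$. Since $D$ is real diagonal, $D^*=D$, so $\hat{A}^*=\hat{U}D^*\hat{U}^*=\hat{A}$. Using $\hat{U}^*\hat{U}=I_n$,
\[
\hat{A}^3=\hat{U}D^3\hat{U}^*=\hat{U}D\hat{U}^*=\hat{A},
\]
because $1^3=1$, $(-1)^3=-1$ and $0^3=0$. Hence $\hat{A}\in\mathbb{DQ}_n^{3\text{-}OP}$.

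The only point to verify is that $\hat{U}\hat{U}^*=I_n$ implies $\hat{U}^*\hat{U}=I_n$ for square dual quaternion matrices. Comparing parts, $U_sU_s^*=I$ shows $U_s$ is invertible with inverse $U_s^*$, so $U_s^*U_s=I$. The infinitesimal part gives $U_sU_d^*+U_dU_s^*=0$; multiplying on the left by $U_s^*$ and on the right by $U_s$ yields $U_d^*U_s+U_s^*U_d=0$. Together these say $\hat{U}^*\hat{U}=I_n$.
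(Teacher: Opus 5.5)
Your proof is correct and follows essentially the same route as the paper, which obtains this theorem from the preceding discussion of dual orthogonal $k$-idempotent matrices. Both arguments use the unitary diagonalization of Lemma~\ref{L01}, reduce $\hat{A}^3=\hat{A}$ to $\lambda_i^3=\lambda_i$ over the dual numbers to force $\lambda_i=\pm1$, and check the converse directly. Your version is also slightly more careful: you explicitly rule out $a_i=0$ (which the paper's ``$a_i$ can be $\pm 1$'' passes over), and you verify that $\hat{U}\hat{U}^*=I_n$ implies $\hat{U}^*\hat{U}=I_n$.
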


\begin{corollary}\label{C1}
  If $\hat{A}$ is a dual orthogonal idempotent matrix, then there exists \( \hat{U} \in \mathbb{DQ}_n^U \) such that
\begin{align*}
\hat{A}\in\mathbb{DQ}_n^{OP} \ \ \Leftrightarrow \ \ \hat{A} = \hat{U} \, \mathrm{diag}(\underbrace{1, \ldots, 1}_{f}, \underbrace{0, \ldots, 0}_{h}) \hat{U}^*,
\end{align*}
for some nonnegative integers $f, h$ such that $f+h=n$. The converse also holds.
\end{corollary}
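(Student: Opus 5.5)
The natural route is to specialize Theorem~\ref{L2}, proving both directions directly. Since $\mathbb{DQ}_n^{OP}\subseteq\mathbb{DQ}_n^{3\text{-}OP}$, any $\hat{A}\in\mathbb{DQ}_n^{OP}$ admits, by Theorem~\ref{L2}, some $\hat{U}\in\mathbb{DQ}_n^U$ with
\[
\hat{A}=\hat{U}\,D\,\hat{U}^*,\qquad D=\mathrm{diag}(I_f,-I_g,0_h).
\]
It then remains to show that $g=0$ under the extra idempotency condition.

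Because $\hat{U}^*\hat{U}=I_n$, we have $\hat{A}^2=\hat{U}D^2\hat{U}^*$. Thus $\hat{A}^2=\hat{A}$ becomes $\hat{U}D^2\hat{U}^*=\hat{U}D\hat{U}^*$. Multiplying on the left by $\hat{U}^*$ and on the right by $\hat{U}$ gives $D^2=D$. Comparing diagonal entries, $(-1)^2=1\neq -1$, so the $-1$ block must be empty, i.e.\ $g=0$. This yields $\hat{A}=\hat{U}\,\mathrm{diag}(I_f,0_h)\hat{U}^*$ with $f+h=n$.

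Alternatively, one can run the eigenvalue argument from the start of Section~\ref{333} with $k=2$. There $\lambda_i^2=\lambda_i$ with $\lambda_i=a_i+\epsilon b_i\neq 0$ gives $a_i^2=a_i$ and $2a_ib_i=b_i$. Since $a_i\neq 0$, this forces $a_i=1$ and then $b_i=0$, so every nonzero eigenvalue equals $1$.

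For the converse, suppose $\hat{A}=\hat{U}P\hat{U}^*$ with $P=\mathrm{diag}(I_f,0_h)$ and $\hat{U}\in\mathbb{DQ}_n^U$. Then
\[
\hat{A}^*=\hat{U}P^*\hat{U}^*=\hat{U}P\hat{U}^*=\hat{A},
\]
since $P$ is real diagonal. Also
\[
\hat{A}^2=\hat{U}P\hat{U}^*\hat{U}P\hat{U}^*=\hat{U}P^2\hat{U}^*=\hat{U}P\hat{U}^*=\hat{A}.
\]
Hence $\hat{A}\in\mathbb{DQ}_n^{OP}$.

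The one step needing care is the use of $\hat{U}^*\hat{U}=I_n$, since the definition of $\mathbb{DQ}_n^U$ only states $\hat{U}\hat{U}^*=I_n$. For square dual quaternion matrices, a one-sided inverse is two-sided: the standard parts satisfy $U_sU_s^*=I$, hence $U_s^*U_s=I$ over $\mathbb{Q}$, and the infinitesimal part then also works out. Equivalently, this is the convention already implicit in Lemma~\ref{L01} and Theorem~\ref{L2}. Apart from this, the argument is routine.
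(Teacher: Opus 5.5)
Your proof is correct and follows essentially the paper's route: the paper gives no separate proof and obtains the corollary from the eigenvalue computation preceding Theorem~\ref{L2}, where for even $k$ the equation $a_i^k=a_i$ with $a_i\neq 0$ forces $a_i=1$ and then $b_i=0$. That is your alternative argument, and your main route of eliminating the $-1$ block of Theorem~\ref{L2} via $D^2=D$ is an equivalent repackaging of it, with your justification of $\hat{U}^*\hat{U}=I_n$ filling in a step the paper leaves implicit.
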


Using the dual Hartwig-Spindelb\"ock decomposition, we establish necessary and sufficient conditions for dual quaternion Hermitian matrices, dual orthogonal idempotent matrices, and dual orthogonal tripotent matrices.
\begin{theorem}\label{t1}
  Let $ \hat{A} \in \mathbb{DQ}^{n \times n}$ be given by \textup{(\ref{F1})}. Then \\
  \noindent  $(a)$ $\begin{aligned}[t]
    \hat{A}\in\mathbb{DQ}_n^{H} 
     \Leftrightarrow & L = 0,\ M=0,\ K^*\Sigma_1=\Sigma_1 K,\ N^*\Sigma_2=\Sigma_2 N;
    \end{aligned}$\\
  \noindent  $(b)$ $\begin{aligned}[t]
  \hat{A}\in\mathbb{DQ}_n^{OP} 
  &\Leftrightarrow \Sigma_1=I_r, \ \Sigma_2=0, \ L= 0,\ M=0,\ K=I_r;
\end{aligned}$\\
\noindent  $(c)$ $\begin{aligned}[t]
\hat{A}\in\mathbb{DQ}_n^{3\text{-}OP}   
&\Leftrightarrow \Sigma_1=I_r, \ \Sigma_2=0,\ L = 0,\  M=0,\ K^*=K. 
\end{aligned}$
\end{theorem}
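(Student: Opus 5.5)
The plan is to work entirely inside the dual Hartwig-Spindelb\"ock form (\ref{F1}), conjugating by $\hat{U}$ so that every identity becomes a block identity for $T:=\begin{bmatrix}\Sigma_1K&\Sigma_1L\\ \Sigma_2M&\Sigma_2N\end{bmatrix}$. Part (a) drives (b) and (c).

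\textbf{Part (a).} We compare $T$ with
$$T^*=\begin{bmatrix}K^*\Sigma_1&M^*\Sigma_2\\ L^*\Sigma_1&N^*\Sigma_2\end{bmatrix}.$$
Sufficiency is immediate. For necessity, equality of the $(1,2)$ blocks gives $\Sigma_1L=M^*\Sigma_2$.
\begin{itemize}
\item The right-hand side has zero standard part, since $\Sigma_2=\epsilon\Sigma_{2d}$. Because $\Sigma_1$ is invertible, $L=\Sigma_1^{-1}M^*\Sigma_2$ is infinitesimal, so $L_1=0$.
\item By (\ref{ZXC1})--(\ref{ZXC2}) this forces $M_1=0$. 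Then $M^*\Sigma_2=\epsilon M_1^*\Sigma_{2d}+\epsilon^2(\cdots)=0$, hence $\Sigma_1L=0$ and $L=0$.
\item Now (\ref{ZXC}) gives $M=0$, and the diagonal blocks yield $\Sigma_1K=K^*\Sigma_1$ and $\Sigma_2N=N^*\Sigma_2$.
\end{itemize}
I expect this two-stage argument to be the main obstacle: first kill the standard part of $L$, then use the unitary relations to upgrade to $L=0$. It must be done carefully because $\Sigma_2$ is nilpotent, not invertible.

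\textbf{Part (c).} By (a), $\hat{A}=\hat{A}^*$ gives $L=M=0$, so $T=\mathrm{diag}(\Sigma_1K,\Sigma_2N)$ with $K,N$ unitary.
\begin{itemize}
\item The $(2,2)$ block gives $(\Sigma_2N)^3=\Sigma_2N$. The left side is $O(\epsilon^3)=0$, so $\Sigma_2N=0$. Since $N$ is invertible, $\Sigma_2=0$.
\item For the $(1,1)$ block, set $B=\Sigma_1K$, which is Hermitian and satisfies $B^3=B$. Since $B$ is invertible (appreciable rank $r$), $B^2=I_r$.
\item Hermitian-ness gives $B^2=BB^*=\Sigma_1KK^*\Sigma_1=\Sigma_1^2$, so $\Sigma_1^2=I_r$. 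Theorem~\ref{QQ9} then gives $\Sigma_1=I_r$, and $\Sigma_1K=K^*\Sigma_1$ becomes $K=K^*$.
\end{itemize}
Conversely, under these conditions $T=\mathrm{diag}(K,0)$ with $K=K^*$ and $KK^*=I_r$. Hence $K^2=I_r$, so $K^3=K$ and $\hat{A}^3=\hat{A}=\hat{A}^*$.

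\textbf{Part (b).} Since $\mathbb{DQ}_n^{OP}\subseteq\mathbb{DQ}_n^{3\text{-}OP}$, part (c) already gives $\Sigma_1=I_r$, $\Sigma_2=0$, $L=M=0$ and $K=K^*$. The extra condition $\hat{A}^2=\hat{A}$ reads $K^2=K$. Together with $K^2=KK^*=I_r$ this yields $K=I_r$. The converse is a direct check.
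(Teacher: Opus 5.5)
Your proposal is correct. Part (a) is essentially the paper's argument: the $(1,2)$ block gives $L_1=0$, then (\ref{ZXC2}) gives $M_1=0$, hence $L_2=0$, and (\ref{ZXC}) gives $M=0$.

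Parts (b) and (c) take a different route. The paper works componentwise. It rewrites $\hat{A}$ as in (\ref{S3}), expands $\hat{A}^2$ and $\hat{A}^3$ into standard and infinitesimal parts, and then extracts each conclusion separately:
\begin{itemize}
\item $\Sigma_{\mathrm{1s}}=I_r$ from $(\Sigma_{\mathrm{1s}}K_1)^2=\Sigma_{\mathrm{1s}}K_1K_1^*\Sigma_{\mathrm{1s}}=I_r$;
\item $\Sigma_{\mathrm{1d}}=0$ from the infinitesimal equation $2\Sigma_{\mathrm{1s}}^2\Sigma_{\mathrm{1d}}K_1=0$;
\item $K_2=K_2^*$ from the infinitesimal part of the Hermitian relation;
\item $\Sigma_{\mathrm{2d}}=0$ from $\Sigma_{\mathrm{2d}}N_1=0$.
\end{itemize}
It then proves (b) separately by the analogous computation with $\hat{A}^2$.

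You instead stay with dual quaternion blocks. Once (a) and (\ref{ZXC}) make $K$ and $N$ unitary, nilpotency of $\Sigma_2N$ forces $\Sigma_2=0$. Invertibility of the Hermitian block $B=\Sigma_1K$ turns $B^3=B$ into $\Sigma_1^2=BB^*=I_r$. This is the dual-level lift of the paper's standard-part step, and Theorem~\ref{QQ9} then handles all the infinitesimal bookkeeping at once. Part (b) follows from (c) via $\mathbb{DQ}_n^{OP}\subseteq\mathbb{DQ}_n^{3\text{-}OP}$ and $K=K^2=KK^*=I_r$.

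Your route is shorter and avoids having to order the componentwise deductions carefully. In the paper, $K_2=K_2^*$ really only follows once $\Sigma_{\mathrm{1d}}=0$ is known, although the text states it first. Your style also matches the dual-level arguments the paper itself uses later, e.g.\ in Theorem~\ref{2pj}. The paper's expansion, in exchange, gives explicit formulas for the standard and infinitesimal parts of $\hat{A}^2$ and $\hat{A}^3$, and shows exactly which component condition produces each conclusion.
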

\begin{proof}
   \noindent $(a):$
  Given $\hat{A}=\hat{A}^*$, (\ref{ab}) yields
\begin{align*}
  &\Sigma_{\mathrm{1s}}L_1=0, \quad  \Sigma_{\mathrm{1s}} L_{2}+\Sigma_{\mathrm{1d}} L_{1}=M_{1}^*\Sigma_{\mathrm{2d}}, \quad N_{1}^*\Sigma_{\mathrm{2d}}=\Sigma_{\mathrm{2d}}N_{1}, \notag \\
& K_1^*\Sigma_{\mathrm{1s}} = \Sigma_{\mathrm{1s}} K_1, \ \ K_1^*\Sigma_{\mathrm{1d}}+K_2^*\Sigma_{\mathrm{1s}}=\Sigma_{\mathrm{1d}}K_1+\Sigma_{\mathrm{1s}}K_2.  
\end{align*}
It follows that $L_1=0$.
Combining this with (\ref{S2}) yields  $M_1=0$, which in turn implies $L_2=0$.
It follows that $L=L_1+\epsilon L_2=0$, and by (\ref{ZXC}), $M=0$.
 In summary, we obtain that $L = 0,\ M=0,\ K^*\Sigma_1=\Sigma_1 K,\ N^*\Sigma_2=\Sigma_2 N$.

\noindent $(b):$ From  $\hat{A}=\hat{A}^*$, $(a)$ and (\ref{ab}), we have
\begin{align}\label{S3}
\hat{A}=\hat{U}\left(\begin{bmatrix}
\Sigma_{\mathrm{1s}}K_{1} & 0 \\
{0} & 0
\end{bmatrix}+\epsilon \begin{bmatrix}
\Sigma_{\mathrm{1d}}K_1+\Sigma_{\mathrm{1s}}K_2 & 0 \\
0 & \Sigma_{\mathrm{2d}}N_1
\end{bmatrix}\right)\hat{U}^*.
\end{align}
Combining this with (\ref{S1}), it follows that
\begin{align*}
\hat{A}^{2}&=\hat{U}\left(\begin{bmatrix}
{(\Sigma_{\mathrm{1s}}K_{1})^{2}} & 0 \\
{0} & {0}
\end{bmatrix}+\epsilon \begin{bmatrix}
\Sigma_{\mathrm{1s}}K_{1}(\Sigma_{\mathrm{1d}}K_1+\Sigma_{\mathrm{1s}}K_2)+(\Sigma_{\mathrm{1d}}K_1+\Sigma_{\mathrm{1s}}K_2)\Sigma_{\mathrm{1s}}K_{1} & 0 \\
0 & 0
\end{bmatrix}\right)\hat{U}^*\\
&=\hat{U}\left(\begin{bmatrix}
{(\Sigma_{\mathrm{1s}}K_{1})^{2}} & 0 \\
{0} & {0}
\end{bmatrix}+\epsilon \begin{bmatrix}
\Sigma_{\mathrm{1s}}K_{1}(K_1^*\Sigma_{\mathrm{1d}}+K_2^*\Sigma_{\mathrm{1s}})+(\Sigma_{\mathrm{1d}}K_1+\Sigma_{\mathrm{1s}}K_2)K_{1}^*\Sigma_{\mathrm{1s}} & 0 \\
0 & 0
\end{bmatrix}\right)\hat{U}^*\\
&=\hat{U}\left(\begin{bmatrix}
{(\Sigma_{\mathrm{1s}}K_{1})^{2}} & 0 \\
{0} & {0}
\end{bmatrix}+\epsilon \begin{bmatrix}
2\Sigma_{\mathrm{1s}}\Sigma_{\mathrm{1d}} & 0 \\
0 & 0
\end{bmatrix}\right)\hat{U}^*.
\end{align*}
 Under the condition $\hat{A}=\hat{A}^*$,  combined with $\hat{A}^2=\hat{A}$, we deduce
$$
\Sigma_{\mathrm{1s}}K_{1}=I_r, \quad 2\Sigma_{\mathrm{1s}}\Sigma_{\mathrm{1d}}=\Sigma_{\mathrm{1s}}K_2+\Sigma_{\mathrm{1d}}K_1 \ \ \text{and} \ \ \Sigma_{\mathrm{2d}}N_1=0.
$$
Since $\hat{A}=\hat{A}^*$  ensures $L_1=0$, it follows from (\ref{S2}) that $N_1N_1^*=I_{n-r}$. Together with $\Sigma_{\mathrm{2d}}N_1=0$, we conclude $\Sigma_{\mathrm{2d}}=0$.
Additionally, with $L_1=0$,  we find that $\Sigma_{\mathrm{1s}}K_{1} K_{1}^*\Sigma_{\mathrm{1s}}=I_r$, resulting in $\Sigma_{\mathrm{1s}}^2=I_r$,   and hence $\Sigma_{\mathrm{1s}}=I_r$. 
Consequently,  $K_1=K_1^*=I_r$, and 
$ K_2+\Sigma_{\mathrm{1d}}=2\Sigma_{\mathrm{1d}}$.
This yields $K_2=K_2^*=\Sigma_{\mathrm{1d}}$. 
From (\ref{S1}), we further obtain $2\Sigma_{\mathrm{1d}}=0$,  which leads to $\Sigma_{\mathrm{1d}}=0$ and $K_2=0$.
Therefore,
$$ \Sigma_1=\Sigma_{\mathrm{1s}}+\epsilon \Sigma_{\mathrm{1d}}=I_r, \ \ \Sigma_2=\epsilon \Sigma_{\mathrm{2d}}=0, \ \ 
L=L_1+\epsilon L_2=0, \ \ K=K_1+\epsilon K_2=I_r,$$
and by (\ref{ZXC}),  $M=0$.

\noindent $(c):$ 
From (\ref{F1}) and (\ref{S3}), it can be calculated that
$$
\hat{A}^3 =\hat{U}\left(\begin{bmatrix}
(\Sigma_{\mathrm{1s}}K_{1})^{3} & 0 \\
{0} & {0}
\end{bmatrix}+\epsilon \begin{bmatrix}
(\Sigma_{\mathrm{1s}} K_1)^2 (\Sigma_{\mathrm{1d}} K_1 + \Sigma_{\mathrm{1s}} K_2)+2\Sigma_{\mathrm{1s}}^2\Sigma_{\mathrm{1d}}K_1 & 0 \\
0 & 0
\end{bmatrix}\right)\hat{U}^*.
$$
Combining $\hat{A}=\hat{A}^*$ and $\hat{A}^3=\hat{A}$, we have 
\begin{align}\label{S4}
{(\Sigma_{\mathrm{1s}}K_{1})^{2}}=I_r, \quad 2\Sigma_{\mathrm{1s}}^2\Sigma_{\mathrm{1d}}K_1=0 \ \ \text{and} \ \ \Sigma_{\mathrm{2d}}N_{1}=0.
\end{align}
Recalling that  $(\Sigma_{\mathrm{1s}} K_1)^* = \Sigma_{\mathrm{1s}} K_1$ and (\ref{S1}),  the first equation in (\ref{S4}) yields $\Sigma_{\mathrm{1s}}^2=I_r$.
Thus, $\Sigma_{\mathrm{1s}}=I_r$.
Together with $(a)$, this yields $K_1=K_1^*$ and $K_2=K_2^*$.
 Furthermore, since $\Sigma_{\mathrm{1s}}=I_r$ and $K_1$ is invertible, the second equation in (\ref{S4}) gives $\Sigma_{\mathrm{1d}}=0$. 
 Finally, (\ref{S2}) and (\ref{S4}) lead to $\Sigma_{\mathrm{2d}}=0$.
Hence, we conclude that $\Sigma_1=\Sigma_{\mathrm{1s}}+\epsilon \Sigma_{\mathrm{1d}}=I_r, \ \ \Sigma_2=\epsilon \Sigma_{\mathrm{2d}}=0$,
$$ 
L=L_1+\epsilon L_2=0, \quad K=K_1+\epsilon K_2=K_1^*+\epsilon K_2^*=K^*,$$
and from (\ref{ZXC}) we have $M=0$.
\end{proof}

On this basis, we delve into the relationships among dual orthogonal tripotent matrices, dual quaternion matrices, and dual generalized inverse matrices.

\begin{theorem}\label{t3}
  Let $ \hat{A} \in \mathbb{DQ}^{n \times n}$. Then the following statements are equivalent: \\
\noindent $(a)$  $\hat{A}$ is a dual orthogonal tripotent matrix;\quad
\noindent $(b)$ $\hat{A} = \hat{A}^* = \hat{A}\sp{\scriptscriptstyle N}$;\\
\noindent $(c)$  $\hat{A}_e = \hat{A}^* = \hat{A}\sp{\scriptscriptstyle N}$;\quad \quad \quad \quad  \quad \quad \quad \quad \quad \quad \quad \ 
\noindent $(d)$  $\hat{A} = \hat{A}^* = \hat{A}\sp{\scriptscriptstyle N}=\hat{A}_e$.
\end{theorem}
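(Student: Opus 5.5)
Throughout, I write $\hat{A}$ in the dual Hartwig-Spindelb\"ock form (\ref{F1}). The plan is to read each condition off the block formulas: (\ref{F1}) for $\hat{A}$, (\ref{Ee}) for $\hat{A}_e$, and (\ref{FQ}) for $\hat{A}\sp{\scriptscriptstyle N}$. I then compare the resulting constraints with Theorem~\ref{t1}(c). The cycle of implications I will prove is $(a)\Rightarrow(d)\Rightarrow(b)\Rightarrow(a)$, together with $(d)\Rightarrow(c)\Rightarrow(a)$. The implications $(d)\Rightarrow(b)$ and $(d)\Rightarrow(c)$ are trivial.

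\textbf{Step $(a)\Rightarrow(d)$.} By Theorem~\ref{t1}(c) we have $\Sigma_1=I_r$, $\Sigma_2=0$, $L=0$, $M=0$ and $K=K^*$. Then (\ref{ZXC}) gives $KK^*=K^*K=I_r$, so $K^2=I_r$. Hence $\hat{A}=\hat{U}\,\mathrm{diag}(K,0)\,\hat{U}^*=\hat{A}^*$. Since $\Sigma_2=0$, (\ref{Ee}) gives $\hat{A}_e=\hat{A}$. Formula (\ref{FQ}) gives $\hat{A}\sp{\scriptscriptstyle N}=\hat{U}\,\mathrm{diag}(K^*,0)\,\hat{U}^*=\hat{A}$.

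\textbf{Step $(b)\Rightarrow(a)$.} From $\hat{A}=\hat{A}^*$, Theorem~\ref{t1}(a) gives $L=0$, $M=0$ and $K^*\Sigma_1=\Sigma_1K$, and $K$ is unitary by (\ref{ZXC}). Thus
\[
\hat{A}=\hat{U}\,\mathrm{diag}(\Sigma_1K,\Sigma_2N)\,\hat{U}^*, \qquad \hat{A}\sp{\scriptscriptstyle N}=\hat{U}\,\mathrm{diag}(K^*\Sigma_1^{-1},0)\,\hat{U}^*.
\]
Equating the two gives $\Sigma_2N=0$. Since $N$ is unitary, this forces $\Sigma_2=0$. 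It also gives $\Sigma_1K=K^*\Sigma_1^{-1}$. Substituting $K^*=\Sigma_1K\Sigma_1^{-1}$ (from the Hermitian relation) into the right-hand side yields $\Sigma_1K=\Sigma_1K\Sigma_1^{-2}$, and cancelling the invertible $\Sigma_1K$ on the left gives $\Sigma_1^{-2}=I_r$. By Theorem~\ref{QQ9}, $\Sigma_1=I_r$, and then $K^*=K$. Theorem~\ref{t1}(c) now gives $(a)$.

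\textbf{Step $(c)\Rightarrow(a)$.} This is the step I expect to be the main obstacle, because the Hermitian property is not given directly. I would first compare $\hat{A}_e$ from (\ref{Ee}) with $\hat{A}\sp{\scriptscriptstyle N}$ from (\ref{FQ}):
\[
\begin{bmatrix}\Sigma_1K&\Sigma_1L\\0&0\end{bmatrix}=\begin{bmatrix}K^*\Sigma_1^{-1}&0\\L^*\Sigma_1^{-1}&0\end{bmatrix}.
\]
Since $\Sigma_1$ is invertible, this forces $L=0$, which by (\ref{ZXC}) gives $M=0$ and $K$ unitary. It also forces $\Sigma_1K=K^*\Sigma_1^{-1}$. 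Next, $\hat{A}^*=\hat{A}_e$ says
\[
\begin{bmatrix}K^*\Sigma_1&0\\0&N^*\Sigma_2\end{bmatrix}=\begin{bmatrix}\Sigma_1K&0\\0&0\end{bmatrix},
\]
so $N^*\Sigma_2=0$, hence $\Sigma_2=0$ because $N$ is unitary, and $K^*\Sigma_1=\Sigma_1K$. Combining the two relations for $K^*$ exactly as in the previous step yields $\Sigma_1^2=I_r$, so $\Sigma_1=I_r$ by Theorem~\ref{QQ9}, and then $K=K^*$. Theorem~\ref{t1}(c) gives $(a)$.

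The only delicate point is making sure the cancellations are legitimate over dual quaternions. They are: $\Sigma_1$ is invertible because its diagonal entries are appreciable, and $K$ is unitary. Everything else is direct block comparison.
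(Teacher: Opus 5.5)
Your proof is correct, but for the reverse implications it takes a different route from the paper's. The paper proves only $(c)\Rightarrow(a)$ in detail, and it never determines $\Sigma_1$ or $K$. From $\hat{A}_e=\hat{A}^*$ alone, comparing (\ref{F1}) with (\ref{Ee}) gives $L=0$ and then $\Sigma_2=0$. Hence $\hat{A}=\hat{A}_e=\hat{A}^*=\hat{A}\sp{\scriptscriptstyle N}$, and tripotency follows from the first defining equation in (\ref{FF1}): $\hat{A}^3=\hat{A}\hat{A}\sp{\scriptscriptstyle N}\hat{A}=\hat{A}_e=\hat{A}$.

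You instead use the explicit form (\ref{FQ}) of $\hat{A}\sp{\scriptscriptstyle N}$. You combine $\Sigma_1K=K^*\Sigma_1^{-1}$ with the Hermitian relation $K^*\Sigma_1=\Sigma_1K$ to get $\Sigma_1^{2}=I_r$, invoke Theorem~\ref{QQ9}, and finish through the block characterization in Theorem~\ref{t1}(c).

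The paper's argument is shorter and needs less machinery (no (\ref{FQ}), no Theorem~\ref{QQ9}, no Theorem~\ref{t1}), because the Penrose-type identity does the work. Your route yields more explicit structure along the way: $\Sigma_1=I_r$, $\Sigma_2=0$, and $K$ Hermitian and unitary. It also treats $(b)$ and $(c)$ uniformly. Finally, it gives a concrete check of $(a)\Rightarrow(d)$ via (\ref{Ee}) and (\ref{FQ}), where the paper only says this is ``obvious by (\ref{E1})''.
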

\begin{proof}
The necessity is obvious by (\ref{E1}). It remains to prove $(c) \Rightarrow (a)$ only; the others are similar.\\
 \noindent $(c)\Rightarrow(a):$ Since $\hat{A}_e=\hat{A}^*$,  it follows from (\ref{F1}) and (\ref{Ee}) that
$$ L^*\Sigma_1=0, \ \  M^*\Sigma_2=\Sigma_1 L \ \ \text{and} \ \  N^*\Sigma_2=0.$$
This immediately yields $L=0$, whence   $M^*\Sigma_2=0$. Combined with  $N^*\Sigma_2=0$, this implies $\Sigma_2=0$ by (\ref{SS}).
Therefore, $\hat{A}=\hat{A}_e$ follows, so (\ref{FF1}) gives  $\hat{A}=\hat{A}_e=\hat{A}\hat{A}\sp{\scriptscriptstyle N}\hat{A}$.
From $\hat{A}\sp{\scriptscriptstyle N}=\hat{A}_e$, we further have $\hat{A}=\hat{A}\sp{\scriptscriptstyle N}$, 
which yields $\hat{A}\hat{A}\sp{\scriptscriptstyle N}\hat{A}=\hat{A}^3$  and consequently $\hat{A}^3=\hat{A}=\hat{A}^*$.
\end{proof}

\begin{corollary}\label{rr5}
  Let $ \hat{A} \in \mathbb{DQ}^{n \times n}$. Then, we have
  $$\hat{A}\in\mathbb{DQ}_n^{3\text{-}OP}  \quad \Leftrightarrow \quad \text{$\hat{A}^{\dagger}$ exists}, \ \hat{A} = \hat{A}^* = \hat{A}^{\dagger}.$$
\end{corollary}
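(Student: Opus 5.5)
For the forward direction, start from Theorem~\ref{L2} and write $\hat{A}=\hat{U}D\hat{U}^*$ with $D=\mathrm{diag}(I_f,-I_g,0_h)$ and $\hat{U}\in\mathbb{DQ}_n^U$. Take $\hat{X}=\hat{A}$ as the candidate Moore--Penrose inverse and check the four equations in (\ref{FF}) directly. Since $D^3=D=D^*$, we get
\[
\hat{A}\hat{A}\hat{A}=\hat{U}D^3\hat{U}^*=\hat{A},
\]
and $\hat{A}\hat{A}=\hat{U}D^2\hat{U}^*$ is Hermitian. Both the first and second equations reduce to $\hat{A}^3=\hat{A}$, and the third and fourth reduce to the Hermitian property of $\hat{A}^2$. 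Therefore $\hat{A}^{\dagger}$ exists and $\hat{A}^{\dagger}=\hat{A}$.

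A uniqueness remark is needed so that the notation $\hat{A}^{\dagger}$ means a single matrix. The standard argument for uniqueness of the Moore--Penrose inverse uses only the four equations and the rule $(\hat{A}\hat{B})^*=\hat{B}^*\hat{A}^*$, so it carries over verbatim to dual quaternion matrices. Alternatively, observe that $\Sigma_2=0$ here, by Theorem~\ref{t1}$(c)$, so Lemma~\ref{LN} gives $\hat{A}^{\dagger}=\hat{A}\sp{\scriptscriptstyle N}$. Theorem~\ref{t3}$(b)$ then identifies this with $\hat{A}$.

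For the converse, assume $\hat{A}^{\dagger}$ exists and $\hat{A}=\hat{A}^*=\hat{A}^{\dagger}$. Substituting $\hat{X}=\hat{A}$ into the first equation of (\ref{FF}) gives $\hat{A}^3=\hat{A}$ immediately. Together with $\hat{A}=\hat{A}^*$, this means $\hat{A}\in\mathbb{DQ}_n^{3\text{-}OP}$ by definition, and this direction needs nothing further. One could also go through Lemma~\ref{LN} and Theorem~\ref{t3}, but the direct substitution is simpler.

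The only point that needs care is the well-definedness of $\hat{A}^{\dagger}$, i.e.\ its uniqueness in the dual quaternion setting, since existence can fail in general. I would settle it as above: the classical uniqueness proof transfers verbatim, or the identification goes through Lemma~\ref{LN} with $\Sigma_2=0$. The algebraic verification itself is routine.
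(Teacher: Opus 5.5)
Your proof is correct, but it takes a more elementary route than the one the paper intends.

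The paper gives no separate argument. It reads the corollary off Theorem~\ref{t3}$(d)$ together with Lemma~\ref{LN}. Comparing (\ref{F1}) with (\ref{Ee}), the condition $\hat{A}=\hat{A}_e$ is equivalent to $\Sigma_2=0$. When $\Sigma_2=0$ the DMPGI exists and coincides with $\hat{A}\sp{\scriptscriptstyle N}$, so $\hat{A}=\hat{A}^*=\hat{A}\sp{\scriptscriptstyle N}=\hat{A}_e$ becomes $\hat{A}=\hat{A}^*=\hat{A}^{\dagger}$.

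You instead verify the four Penrose equations for $\hat{X}=\hat{A}$ directly. You then invoke the classical uniqueness argument, which indeed needs only associativity, $(\hat{A}^*)^*=\hat{A}$ and $(\hat{A}\hat{B})^*=\hat{B}^*\hat{A}^*$, all valid over $\mathbb{DQ}$. For the converse you read $\hat{A}^3=\hat{A}$ off the first Penrose equation.

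Your version has three advantages:
\begin{itemize}
\item It is self-contained and avoids the dual Hartwig--Spindelb\"ock machinery entirely.
\item Even the diagonalization from Theorem~\ref{L2} that you start from is unnecessary. The identities $\hat{A}^3=\hat{A}$ and $(\hat{A}^2)^*=(\hat{A}^*)^2=\hat{A}^2$ follow straight from $\hat{A}^3=\hat{A}=\hat{A}^*$.
\item Your converse does not rely on the claim that existence of $\hat{A}^{\dagger}$ forces $\Sigma_2=0$. Lemma~\ref{LN} only states the opposite implication, and that claim is merely asserted in the remark at the end of Section~\ref{444}.
\end{itemize}

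What the paper's route buys in exchange is the simultaneous identification $\hat{A}^{\dagger}=\hat{A}\sp{\scriptscriptstyle N}=\hat{A}_e=\hat{A}$. This places the corollary inside the NDMPI framework used throughout the rest of the paper.
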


\section{Characterizations of dual orthogonal tripotent matrices}\label{444}

By Theorem~\ref{t3} $(d)$, $\hat{A} \in \mathbb{DQ}_n^{3\text{-}OP}$ if and only if $\hat{A} = \hat{A}^* = \hat{A}\sp{\scriptscriptstyle N}=\hat{A}_e$.
We therefore consider the feasibility of characterizing tripotent matrices by means of averages involving $\hat{A}$,  $\hat{A}^*$, $\hat{A}\sp{\scriptscriptstyle N}$ and  $\hat{A}_e$.
\begin{theorem}\label{t5}
 Let $ \hat{A} \in \mathbb{DQ}^{n \times n}$. Then the following statements are equivalent: \\
\noindent $(a)$  $\hat{A}$ is a dual orthogonal tripotent matrix;\\
\noindent $(b)$ $\frac{1}{4}(\hat{A} + \hat{A}^*+\hat{A}\sp{\scriptscriptstyle N}+\hat{A}_e) = \hat{A}$;\\
\noindent $(c)$ $\frac{1}{4}(\hat{A} + \hat{A}^*+\hat{A}\sp{\scriptscriptstyle N}+\hat{A}_e) = \hat{A}_e$.
\end{theorem}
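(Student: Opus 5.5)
The plan is to work in the dual Hartwig--Spindelb\"ock decomposition (\ref{F1}) and compare blocks. The implications $(a)\Rightarrow(b)$ and $(a)\Rightarrow(c)$ are immediate from Theorem~\ref{t3}$(d)$, since then $\hat{A}=\hat{A}^*=\hat{A}\sp{\scriptscriptstyle N}=\hat{A}_e$. For the converses, write
\[
\hat{A}^*=\hat{U}\begin{bmatrix} K^*\Sigma_1 & M^*\Sigma_2\\ L^*\Sigma_1 & N^*\Sigma_2\end{bmatrix}\hat{U}^*,
\]
and take $\hat{A}\sp{\scriptscriptstyle N}$ from (\ref{FQ}) and $\hat{A}_e$ from (\ref{Ee}).

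\textbf{$(b)\Rightarrow(a)$.} Comparing the four blocks of $\hat{A}^*+\hat{A}\sp{\scriptscriptstyle N}+\hat{A}_e=3\hat{A}$ gives
\[
K^*(\Sigma_1+\Sigma_1^{-1})=2\Sigma_1K,\quad M^*\Sigma_2=2\Sigma_1L,\quad L^*(\Sigma_1+\Sigma_1^{-1})=3\Sigma_2M,\quad N^*\Sigma_2=3\Sigma_2N .
\]
\begin{itemize}
\item \emph{Off-diagonal blocks.} The standard part of the third equation is $L_1^*(\Sigma_{\mathrm{1s}}+\Sigma_{\mathrm{1s}}^{-1})=0$, so $L_1=0$. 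Then (\ref{ZXC2}) gives $M_1=0$, hence $\Sigma_2M=\epsilon\Sigma_{\mathrm{2d}}M_1=0$. Since $\Sigma_1+\Sigma_1^{-1}$ is invertible, $L=0$, and by (\ref{ZXC}) $M=0$ and $N$ is unitary.
\item \emph{$(2,2)$ block.} Its $\epsilon$-part is $N_1^*\Sigma_{\mathrm{2d}}=3\Sigma_{\mathrm{2d}}N_1$ with $N_1$ unitary. Taking Frobenius norms gives $\|\Sigma_{\mathrm{2d}}\|=3\|\Sigma_{\mathrm{2d}}\|$, so $\Sigma_2=0$.
\item \emph{$(1,1)$ block, standard part.} Write $S=\Sigma_{\mathrm{1s}}$. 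Then $K_1^*(S+S^{-1})=2SK_1$ with $K_1$ unitary, so $(S+S^{-1})^2=4K_1^*S^2K_1$. Hence the multisets $\{s_i+s_i^{-1}\}$ and $\{2s_i\}$ coincide. Comparing minima gives $2\min s_i\ge 2$, so every $s_i\ge1$. Then $s_i+s_i^{-1}\le 2s_i$ with equality only at $s_i=1$, and equal sums force $S=I_r$, after which $K_1=K_1^*$.
\item \emph{$(1,1)$ block, dual part.} Put $\Sigma_{\mathrm{1d}}=D$. Since $\Sigma_1+\Sigma_1^{-1}=2I_r$ exactly, the $\epsilon$-part reads $K_2^*=K_2+DK_1$, so $DK_1$ is skew-Hermitian, i.e. $DK_1=-K_1D$. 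Hence $K_1DK_1=-D$, using $K_1^2=I_r$. On the other hand, substituting into (\ref{ZXC1}) gives $D=K_1K_2+K_2K_1$, which yields $K_1DK_1=D$. Therefore $D=0$, so $\Sigma_1=I_r$ and $K=K^*$.
\end{itemize}
Theorem~\ref{t1}$(c)$ then gives $(a)$.

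\textbf{$(c)\Rightarrow(a)$.} Here the block equations come from $\hat{A}+\hat{A}^*+\hat{A}\sp{\scriptscriptstyle N}=3\hat{A}_e$. The $(1,1)$ equation is the same as before, and the $(1,2)$ block again reads $M^*\Sigma_2=2\Sigma_1L$. The $(2,1)$ block gives $\Sigma_2M+L^*(\Sigma_1+\Sigma_1^{-1})=0$, so $L=0$ and $M=0$ exactly as above. The step I expect to be the real obstacle is the $(2,2)$ block, which now reads $\Sigma_2N+N^*\Sigma_2=0$, i.e. $\Sigma_{\mathrm{2d}}N_1=-N_1^*\Sigma_{\mathrm{2d}}$. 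The norm trick no longer forces $\Sigma_{\mathrm{2d}}=0$, because over the quaternions $N_1$ may be skew-Hermitian. Indeed, the $1\times1$ matrix $\hat{A}=\epsilon\mathbf{i}$ has $\hat{A}_e=\hat{A}\sp{\scriptscriptstyle N}=0$ and $\hat{A}^*=-\epsilon\mathbf{i}$. It therefore satisfies $(c)$ without being Hermitian, so it is not dual orthogonal tripotent.

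So I would first try to extract $\Sigma_2=0$ from some additional relation. If none exists, $(c)$ must be supplemented by a hypothesis such as $\Sigma_2=0$, equivalently $\hat{A}=\hat{A}_e$ (the $\hat{A}^{\dagger}$-exists situation). Under that hypothesis the argument from $(b)$ goes through verbatim.
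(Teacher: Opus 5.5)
Your proof of $(b)\Rightarrow(a)$ is correct and is essentially the paper's. It uses the same block equations in the dual Hartwig--Spindelb\"ock form and the same deductions: $L=M=0$, then $\Sigma_2=0$, then $\Sigma_{\mathrm{1s}}=I_r$ with $K_1=K_1^*$, then $\Sigma_{\mathrm{1d}}=0$ with $K_2=K_2^*$. It ends with the same appeal to Theorem~\ref{t1}$(c)$. The local steps differ only mildly:
\begin{itemize}
\item your Frobenius-norm argument replaces the paper's $\Sigma_{\mathrm{2d}}N_1=3N_1^*\Sigma_{\mathrm{2d}}=9\Sigma_{\mathrm{2d}}N_1$;
\item your comparison of the spectra of $(S+S^{-1})^2$ and $4S^2$ replaces the paper's route through Corollary~\ref{C2.7} (first $K_1\Sigma_{\mathrm{1s}}=\Sigma_{\mathrm{1s}}K_1$, then $(I_r+\Sigma_{\mathrm{1s}}^{-2})^2=4I_r$), so you never need the commutation lemma;
\item your $K_1DK_1=\pm D$ argument is a tidier form of the paper's manipulation of (\ref{WW4}).
\end{itemize}

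On $(c)\Rightarrow(a)$ you are right and the paper is wrong. The paper writes out only $(b)\Rightarrow(a)$ and says the other case is similar. But the $(2,2)$ block of $\hat{A}+\hat{A}^*+\hat{A}\sp{\scriptscriptstyle N}=3\hat{A}_e$ is $\Sigma_2N+N^*\Sigma_2=0$, which only says that $\Sigma_{\mathrm{2d}}N_1$ is skew-Hermitian.

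Your example checks out against the paper's definitions. For $\hat{A}=\epsilon\mathbf{i}$ we have $\operatorname{ARank}(\hat{A})=0$, so $\hat{A}_e=0$. Also $\hat{A}\sp{\scriptscriptstyle N}=0$: the equation $\hat{X}\hat{A}\hat{X}=\hat{X}$ alone forces first the standard part of $\hat{X}$, and then $\hat{X}$ itself, to vanish. Thus $(c)$ holds while $\hat{A}^*=-\hat{A}\neq\hat{A}$.

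So you can drop the hedging. There is no further relation to extract, because the four block equations are equivalent to $(c)$ and your example satisfies them. Nor is this a quaternionic artefact: $\epsilon\left[\begin{smallmatrix}0&1\\ -1&0\end{smallmatrix}\right]$ works over the dual reals, and $\operatorname{diag}(1,\epsilon\mathbf{i})$ gives a counterexample with $\operatorname{ARank}(\hat{A})=1$. The right repair is the one you propose, mirroring Theorem~\ref{tr1}: add the hypothesis $\hat{A}=\hat{A}_e$. Under it, $(c)$ coincides literally with $(b)$.
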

\begin{proof}
By Theorem \ref{t3}, we have that $(a)$ implies each of the conditions $(b)$--$(c)$. We will now prove that $(b) \Rightarrow (a)$, with the remaining cases following similarly.
  
   \noindent $(b)\Rightarrow(a):$ We have that $\hat{A}^*+\hat{A}\sp{\scriptscriptstyle N}+\hat{A}_e=3\hat{A}$. From (\ref{ab}), (\ref{abbe}) and (\ref{NN}), we obtain  
   \begin{align}
&2\Sigma_{\mathrm{1s}}L_1=0, \quad K_1^*(\Sigma_{\mathrm{1s}}+\Sigma_{\mathrm{1s}}^{-1})=2\Sigma_{\mathrm{1s}}K_1; \label{WW1}\\
&2(\Sigma_{\mathrm{1s}}L_2+\Sigma_{\mathrm{1d}}L_1)=M_1^*\Sigma_{\mathrm{2d}},  \ \ 3\Sigma_{\mathrm{2d}}N_1=N_1^*\Sigma_{\mathrm{2d}};\label{WW3} \\
&K_1^*(\Sigma_{\mathrm{1d}}-\Sigma_{\mathrm{1s}}^{-2}\Sigma_{\mathrm{1d}})+K_2^*(\Sigma_{\mathrm{1s}}+\Sigma_{\mathrm{1s}}^{-1}) =2(\Sigma_{\mathrm{1s}}K_2+\Sigma_{\mathrm{1d}}K_1). \label{WW2}
\end{align}
It follows from (\ref{WW1}) that $L_1=0$, and thus
 $2K_1^*(\Sigma_{\mathrm{1s}}+\Sigma_{\mathrm{1s}}^{-1})\Sigma_{\mathrm{1s}}K_1=2\Sigma_{\mathrm{1s}}K_1K_1^*(\Sigma_{\mathrm{1s}}+\Sigma_{\mathrm{1s}}^{-1})\Sigma_{\mathrm{1s}}K_1$, i.e., $(\Sigma_{\mathrm{1s}}^2+I_r)K_1=K_1(\Sigma_{\mathrm{1s}}^2+I_r)$.
Corollary~\ref{C2.7} yields $K_1\Sigma_{\mathrm{1s}}=\Sigma_{\mathrm{1s}} K_1$,  which together with (\ref{WW1}) gives
  $$ 2K_1^2=I_r+\Sigma_{\mathrm{1s}}^{-2} \ \ \Rightarrow \ \ 4I_r=4K_1^2(K_1^*)^2=(I_r+\Sigma_{\mathrm{1s}}^{-2})^2,$$
  implying  $(\Sigma_{\mathrm{1s}}^{-2}+3I_r)(\Sigma_{\mathrm{1s}}^{-2}-I_r)=0$. 
  As $\Sigma_{\mathrm{1s}}^{-2}+3I_r$ is invertible, $\Sigma_{\mathrm{1s}}^{-2}=I_r$ implies 
 $\Sigma_{\mathrm{1s}}=I_r$,  which combined with (\ref{WW1}) gives $K_1^*=K_1$.

 From the above, we have $L_1=0$. By (\ref{ZXC2}), it follows that $M_1=0$ and $N_1N_1^*=I_{n-r}$.  Hence, in (\ref{WW3}), it is clear that $L_2=0$. 
Moreover, $\Sigma_{\mathrm{2d}}N_1=3N_1^*\Sigma_{\mathrm{2d}}=9\Sigma_{\mathrm{2d}}N_1$, i.e., $8\Sigma_{\mathrm{2d}}N_1=0$,
which implies $\Sigma_{\mathrm{2d}}=0$.

We now consider (\ref{WW2}). Using $\Sigma_{1}=I_r$ and $K_1^*=K_1$, we have
  \begin{align}\label{WW4}
    K_2^*-K_2=\Sigma_{\mathrm{1d}}K_1.
  \end{align}
  Adding \eqref{WW4} to its conjugate transpose gives $\Sigma_{\mathrm{1d}}K_1=-K_1\Sigma_{\mathrm{1d}}$.
 Then \eqref{WW4} can be rewritten as $K_2-K_2^*=K_1\Sigma_{\mathrm{1d}}$.
By multiplying this equation on the left by $K_1$, and \eqref{WW4}  on the right by $K_1$, we obtain
  $$ K_1K_2-K_1K_2^*=\Sigma_{\mathrm{1d}} \ \ \text{and} \ \  K_2^*K_1-K_2K_1 =\Sigma_{\mathrm{1d}}.$$
From (\ref{ZXC1}), it follows that $K_2^*=-K_1K_2K_1$, which implies $K_1K_2^*=-K_2K_1$ and $K_2^*K_1=-K_1K_2$.
Then the above obtained relations lead to
$$ K_1K_2+K_1K_2=\Sigma_{\mathrm{1d}} \ \ \text{and} \ \ -K_1K_2-K_1K_2=\Sigma_{\mathrm{1d}}.$$
Adding these two equations yields  $\Sigma_{\mathrm{1d}}=0$, which together with (\ref{WW4}) implies   $K_2^*=K_2$.

In summary, we obtain $\Sigma_1=\Sigma_{\mathrm{1s}}+\epsilon \Sigma_{\mathrm{1d}}=I_r, \ \ \Sigma_2=\epsilon \Sigma_{\mathrm{2d}}=0$, 
$$ 
L=L_1+\epsilon L_2=0, \quad K=K_1+\epsilon K_2=K_1^*+\epsilon K_2^*=K^*,$$
and  $M=0$ by (\ref{ZXC}), whence by Theorem~\ref{t1} we have $\hat{A}\in\mathbb{DQ}_n^{3\text{-}OP}$.
\end{proof}

This raises the question of whether the condition $\frac{1}{4}(\hat{A} + \hat{A}^*+\hat{A}\sp{\scriptscriptstyle N}+\hat{A}_e) = \hat{A}\sp{\scriptscriptstyle N}$  is a necessary and sufficient criterion for
$\hat{A}\in\mathbb{DQ}_n^{3\text{-}OP}$. 
 However, the following example demonstrates that this condition does not hold universally.

\begin{example}
  Let $$\hat{A}=\begin{bmatrix}
1 & \epsilon\mathbf{j} & \epsilon & -\epsilon\\
-\epsilon\mathbf{j} & 1 & -\epsilon & \epsilon\\
\epsilon & -\epsilon & 3\epsilon\mathbf{k} & 0\\
-\epsilon & \epsilon & 0 & 0
\end{bmatrix}.$$
It follows from (\ref{F1}) that there exists  $\hat{U}=\begin{bmatrix}
-\mathbf{i} - \epsilon\mathbf{j} & -\epsilon\mathbf{j} & \epsilon\mathbf{k} & \epsilon \\
\epsilon\mathbf{i} & -\mathbf{j} - \epsilon\mathbf{k} & -\epsilon\mathbf{k} & -\epsilon \\
-\epsilon\mathbf{i} & \epsilon\mathbf{j} & -\mathbf{k} - \epsilon\mathbf{i} & \epsilon \\
\epsilon\mathbf{i} & -\epsilon\mathbf{j} & \epsilon\mathbf{k} & 1 - \epsilon\mathbf{i}
\end{bmatrix}$ satisfying
 $$ \hat{U}^*\hat{A}\hat{U}=\begin{bmatrix}
\Sigma_1 K & \Sigma_1 L\\
\Sigma_2 M & \Sigma_2 N
\end{bmatrix}=\begin{bmatrix}
1 & \epsilon \mathbf{i} & 0 & 0\\
-\epsilon\mathbf{i} & 1 & 0 & 0 \\
0 & 0 & 3\epsilon\mathbf{k} & 0\\
0 & 0 & 0 & 0
\end{bmatrix},$$
where $$
\Sigma_1=\begin{bmatrix}
    1 & 0\\
    0 & 1
\end{bmatrix}, \ \
\Sigma_2=\begin{bmatrix}
    3\epsilon & 0\\
    0 & 0
\end{bmatrix} \ \ \text{and} \ \ 
K=\begin{bmatrix}
1 & \epsilon\mathbf{i}\\
-\epsilon\mathbf{i} & 1
\end{bmatrix}, \ \
L=\begin{bmatrix}
    0 & 0\\
    0 & 0
\end{bmatrix},\ \ 
M=\begin{bmatrix}
    0 & 0\\
    0 & 0
\end{bmatrix}, \ \ 
  N=\begin{bmatrix}
\mathbf{k}+\epsilon\mathbf{i} & 0\\
0 & \mathbf{i}+\epsilon\mathbf{j}
\end{bmatrix}$$ satisfy (\ref{SS}).
Then, we have $\hat{U}^*\hat{A}_e \hat{U}=
\begin{bmatrix}
\Sigma_1 K & \Sigma_1 L\\
0 & 0
\end{bmatrix}=\begin{bmatrix}
 1 & \epsilon\mathbf{i} & 0 & 0\\
-\epsilon\mathbf{i} & 1 & 0 & 0 \\
0 & 0 & 0 & 0\\
0 & 0 & 0 & 0
\end{bmatrix}$ and
\begin{align*} 
  \hat{U}^*\hat{A}^* \hat{U} = \begin{bmatrix}
K^*\Sigma_1  & L^*\Sigma_1 \\
M^*\Sigma_2  & N^*\Sigma_2 
\end{bmatrix}=\begin{bmatrix}
1 & \epsilon\mathbf{i} & 0 & 0\\
-\epsilon\mathbf{i} & 1 & 0 & 0 \\
0 & 0 & -3\epsilon\mathbf{k} & 0\\
0 & 0 & 0 & 0
\end{bmatrix}, \ \
&\hat{U}^*\hat{A}\sp{\scriptscriptstyle N} \hat{U}= \begin{bmatrix}
K^*\Sigma_1^{-1}  & 0 \\
L^*\Sigma_1^{-1}  & 0\end{bmatrix}=\begin{bmatrix}
1 & \epsilon\mathbf{i} & 0 & 0\\
-\epsilon\mathbf{i} & 1 & 0 & 0 \\
0 & 0 & 0 & 0 \\
0 & 0 & 0 & 0
\end{bmatrix}.
\end{align*}
Therefore, $\hat{A} + \hat{A}^*+\hat{A}_e=3\hat{A}\sp{\scriptscriptstyle N}$, i.e., $\frac{1}{4}(\hat{A} + \hat{A}^*+\hat{A}\sp{\scriptscriptstyle N}+\hat{A}_e) = \hat{A}\sp{\scriptscriptstyle N}$.
However, this only implies $ \hat{A}_e = \hat{A}\sp{\scriptscriptstyle N}$,  while $\hat{A}\neq \hat{A}^* \neq \hat{A}_e$. 
This contradicts Theorem~\ref{t3}. Hence, $\hat{A} \notin \mathbb{DQ}^{3\text{-}OP}$.
\end{example}
In the next theorem, we discuss the relationship beetween $\frac{1}{4}(\hat{A} + \hat{A}^*+\hat{A}\sp{\scriptscriptstyle N}+\hat{A}_e) = \hat{A}\sp{\scriptscriptstyle N}$ and $\hat{A}\in\mathbb{DQ}_n^{3\text{-}OP}$.

\begin{theorem}\label{tr1}
 Let $ \hat{A} \in \mathbb{DQ}^{n \times n}$. Then the following statements are equivalent: \\
\noindent $(a)$  $\hat{A}$ is a dual orthogonal tripotent matrix;\\
\noindent $(b)$ $\frac{1}{4}(\hat{A} + \hat{A}^*+\hat{A}\sp{\scriptscriptstyle N}+\hat{A}_e) = \hat{A}\sp{\scriptscriptstyle N}$, and $\hat{A}=\hat{A}_e$ (i.e., $\hat{A}^{\dagger}$ exists).
\end{theorem}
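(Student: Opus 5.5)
The direction $(a)\Rightarrow(b)$ follows at once from Theorem~\ref{t3}$(d)$. If $\hat{A}\in\mathbb{DQ}_n^{3\text{-}OP}$, then $\hat{A}=\hat{A}^*=\hat{A}\sp{\scriptscriptstyle N}=\hat{A}_e$. Hence the average of the four matrices equals $\hat{A}\sp{\scriptscriptstyle N}$, and $\hat{A}=\hat{A}_e$ holds trivially. Existence of $\hat{A}^{\dagger}$ is given by Corollary~\ref{rr5}.

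For $(b)\Rightarrow(a)$ we work with the dual Hartwig-Spindelb\"ock decomposition (\ref{F1}). First, $\hat{A}=\hat{A}_e$ together with (\ref{F1}) and (\ref{Ee}) gives $\Sigma_2 M=0$ and $\Sigma_2 N=0$. Since $MM^*+NN^*=I_{n-r}$ by (\ref{SS}), this forces $\Sigma_2=0$. The averaging condition is $\hat{A}+\hat{A}^*+\hat{A}_e=3\hat{A}\sp{\scriptscriptstyle N}$, and with $\hat{A}=\hat{A}_e$ it becomes $2\hat{A}+\hat{A}^*=3\hat{A}\sp{\scriptscriptstyle N}$. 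In block form, using (\ref{F1}) with $\Sigma_2=0$ and (\ref{FQ}), this reads
\[
\begin{bmatrix} 2\Sigma_1K+K^*\Sigma_1 & 2\Sigma_1 L\\ L^*\Sigma_1 & 0\end{bmatrix}
=\begin{bmatrix} 3K^*\Sigma_1^{-1} & 0\\ 3L^*\Sigma_1^{-1} & 0\end{bmatrix}.
\]
The $(1,2)$ block gives $\Sigma_1 L=0$, and $\Sigma_1$ is invertible, so $L=0$. Then (\ref{ZXC}) gives $KK^*=K^*K=I_r$ and $M=0$. What remains is the $(1,1)$ relation
\[
2\Sigma_1 K = K^*(3\Sigma_1^{-1}-\Sigma_1).
\]

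The main step is to extract $\Sigma_1=I_r$ and $K=K^*$ from this relation. We mimic the proof of Theorem~\ref{t5}, but work directly with dual matrices and Theorems~\ref{ll} and \ref{QQ9}, avoiding the split into standard and infinitesimal parts.

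Set $D=3\Sigma_1^{-1}-\Sigma_1$, which is a diagonal dual matrix and so commutes with $\Sigma_1$. Taking the conjugate transpose of $2\Sigma_1K=K^*D$ gives $2K^*\Sigma_1=DK$. Multiplying, $4\Sigma_1KK^*\Sigma_1=K^*D^2K$, that is $4\Sigma_1^2=K^*D^2K$, or equivalently $K(4\Sigma_1^2)K^*=D^2$. A second route is to substitute $K^*=2\Sigma_1 K D^{-1}$ into $2K^*\Sigma_1=DK$; this works when $D$ is invertible, which should be checked at the standard part, and otherwise the first route is used. The goal is a commutation relation of the form $(\Sigma_1^p+\Sigma_1^q)K=K(\Sigma_1^p+\Sigma_1^q)$, for instance $(\Sigma_1^{2}+3I_r)K=K(\Sigma_1^{2}+3I_r)$. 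From such a relation, Theorem~\ref{ll}$(b)$ gives $\Sigma_1K=K\Sigma_1$.

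Once $K$ commutes with $\Sigma_1$, the relation becomes $2\Sigma_1 K=K^*D$, hence $K^2\cdot 2\Sigma_1^2=(K^*K\ldots)$. More directly, taking norms via $KK^*=I_r$ gives $4\Sigma_1^4=(3I_r-\Sigma_1^2)^2$. This factors as $(3\Sigma_1^2-3I_r)(\Sigma_1^2+3I_r)=0$, so $\Sigma_1^2=I_r$ because the second factor is appreciable and hence invertible. Theorem~\ref{QQ9} then gives $\Sigma_1=I_r$. Substituting back, $2K=2K^*$, so $K=K^*$.

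Theorem~\ref{t1}$(c)$ now yields $\hat{A}\in\mathbb{DQ}_n^{3\text{-}OP}$. The hardest point is deriving the commutation relation needed for Theorem~\ref{ll}. If the direct dual manipulation proves awkward, the fallback is to split into standard and infinitesimal parts as in Theorem~\ref{t5}, using Corollary~\ref{C2.7} and (\ref{ZXC1}).
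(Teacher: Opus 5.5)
\textbf{There is a genuine gap: the commutation $\Sigma_1K=K\Sigma_1$ is never derived.}

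Several parts of your argument are correct. The reduction to $\Sigma_2=0$, $L=0$, $M=0$ works, as does the relation $2\Sigma_1K=K^*D$ with $D=3\Sigma_1^{-1}-\Sigma_1$. The endgame also works once $\Sigma_1K=K\Sigma_1$ is known: you get $4\Sigma_1^2=D^2$, then the factorization $(3\Sigma_1^2-3I_r)(\Sigma_1^2+3I_r)=0$, then Theorem~\ref{QQ9}, then $K=K^*$. The problem is exactly the step you flag as hardest. Neither of your two routes produces the commutation relation.

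\emph{Route one.} Multiplying the relation by its adjoint gives $K^*D^2K=4\Sigma_1^2$. This only says that the diagonal matrices $D^2$ and $4\Sigma_1^2$ are unitarily similar, which is too weak. For suitable positive $\sigma_1\neq\sigma_2$ one has $(3/\sigma_1-\sigma_1)^2=4\sigma_2^2$ and $(3/\sigma_2-\sigma_2)^2=4\sigma_1^2$. In that case the swap matrix satisfies this identity without commuting with $\Sigma_1$. Information contained in the original relation has been discarded.

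\emph{Route two.} This needs $D$ invertible, which fails whenever some $\sigma_i$ has standard part $\sqrt{3}$. Even when $D$ is invertible, it only gives $4\Sigma_1K\Sigma_1=DKD$. Entrywise this is $(4\sigma_i\sigma_j-d_id_j)k_{ij}=0$, which is not a commutation relation.

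\emph{The fallback to Theorem~\ref{t5}.} This is not an argument until the corresponding step is actually written out.

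\textbf{The missing idea.} Square $X=2\Sigma_1K=K^*D$, using a different expression for each factor. Using $KK^*=I_r$,
\[
X^2=(2\Sigma_1K)(K^*D)=2\Sigma_1D,\qquad X^2=(K^*D)(2\Sigma_1K)=2K^*D\Sigma_1K .
\]
Hence $\Sigma_1D=K^*(D\Sigma_1)K$. Since $\Sigma_1D=D\Sigma_1=3I_r-\Sigma_1^2$ and $K$ is unitary, this gives $(3I_r-\Sigma_1^2)K=K(3I_r-\Sigma_1^2)$, that is, $\Sigma_1^2K=K\Sigma_1^2$. Theorem~\ref{ll}$(a)$ with $p=2$ then yields $\Sigma_1K=K\Sigma_1$, and the rest of your argument goes through unchanged. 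This is precisely the step the paper uses.
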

\begin{proof}
Combining (\ref{F1}) and (\ref{Ee}) with $\hat{A}=\hat{A}_e$, we have $\Sigma_2=0$.
From (\ref{FQ}) and $\frac{1}{4}(\hat{A} + \hat{A}^*+\hat{A}\sp{\scriptscriptstyle N}+\hat{A}_e) = \hat{A}\sp{\scriptscriptstyle N}$,  we thus obtain
\begin{align*}
  L^*\Sigma_1=0 \ \ \text{and} \ \  2\Sigma_1 K=K^*(3\Sigma_1^{-1}-\Sigma_1).
\end{align*}
It is readily seen that $L=0$, and from (\ref{ZXC}) it follows that $M=0$. 
Moreover, we deduce that $2\Sigma_1 KK^*(3\Sigma_1^{-1}-\Sigma_1)=2K^*(3\Sigma_1^{-1}-\Sigma_1)\Sigma_1 K$, which in turn implies $(3I_r-\Sigma_1^2)K=K(3I_r-\Sigma_1^2)$.
By Theorem~\ref{ll}, $\Sigma_1 K=K\Sigma_1$, whence we conclude that $2K^2=3\Sigma_1^{-2}-I_r$.
Then, we have 
$$(3\Sigma_1^{-2}-I_r)^2=4K^2(K^*)^2=4I_r \ \ \Rightarrow \ \ -3\Sigma_1^{-4}(\Sigma_1-I_r)(\Sigma_1+I_r)(\Sigma_1^2+3I_r)=0.$$
By the invertibility of $\Sigma_1+I_r$ and $\Sigma_1^2+3I_r$, we deduce that $\Sigma_1=I_r$, which further implies $K=K^*$.
By Theorem~\ref {t1}, $\hat{A}\in\mathbb{DQ}_n^{3\text{-}OP}$.
\end{proof}

Similarly, we next examine whether $\frac{1}{4}(\hat{A} + \hat{A}^*+\hat{A}\sp{\scriptscriptstyle N}+\hat{A}_e) = \hat{A}^*$  is equivalent to $\hat{A}\in\mathbb{DQ}_n^{3\text{-}OP}$.
Once again, the following example shows that this equivalence is not valid in general.

\begin{example}
  Let $$\hat{A}=\begin{bmatrix}
1 & -\epsilon\left(1 + \frac{\mathbf{j}}{\sqrt{5}}\right) & \epsilon\left(1 + \frac{\mathbf{k}}{\sqrt{5}}\right) & -\epsilon \\[5pt] 
-\epsilon & -\frac{\mathbf{j}}{\sqrt{5}} + \epsilon \frac{\mathbf{k}}{\sqrt{5}} & -\epsilon \frac{\mathbf{k}}{\sqrt{5}} & 0 \\[5pt]
\epsilon & \epsilon \frac{\mathbf{j}}{\sqrt{5}} & -\frac{\mathbf{k}}{\sqrt{5}} - \epsilon \frac{\mathbf{i}}{\sqrt{5}} & 0 \\[5pt]
-\epsilon & -\epsilon \frac{\mathbf{j}}{\sqrt{5}} & \epsilon \frac{\mathbf{k}}{\sqrt{5}} & 0
\end{bmatrix}.$$
  
\noindent Then according to (\ref{F1}), there exists  $\hat{U}=\begin{bmatrix}
-\mathbf{i} - \epsilon\mathbf{j} & -\epsilon\mathbf{j} & \epsilon\mathbf{k} & \epsilon \\
\epsilon\mathbf{i} & -\mathbf{j} - \epsilon\mathbf{k} & -\epsilon\mathbf{k} & -\epsilon \\
-\epsilon\mathbf{i} & \epsilon\mathbf{j} & -\mathbf{k} - \epsilon\mathbf{i} & \epsilon \\
\epsilon\mathbf{i} & -\epsilon\mathbf{j} & \epsilon\mathbf{k} & 1 - \epsilon\mathbf{i}
\end{bmatrix}$ such that
 $$ \hat{U}^*\hat{A}\hat{U}=\begin{bmatrix}
\Sigma_1 K & \Sigma_1 L\\
\Sigma_2 M & \Sigma_2 N
\end{bmatrix}=\begin{bmatrix}
1  & 0 & 0 & 0 \\
0 & \frac{1}{\sqrt{5}}(\mathbf{j}+\epsilon\mathbf{k}) & 0 & 0 \\
0 & 0 & \frac{1}{\sqrt{5}}(\mathbf{k}+\epsilon\mathbf{i}) & 0 \\
0 & 0 & 0 & 0 
\end{bmatrix},$$
where  $\Sigma_1=\begin{bmatrix}
    1 & 0 & 0  \\
    0 & \frac{1}{\sqrt{5}} & 0 \\
    0 & 0 & \frac{1}{\sqrt{5}} \end{bmatrix}$, $\Sigma_2=0$ and $K=
\begin{bmatrix}
1 & 0 & 0 \\  
0 & \mathbf{j}+\epsilon\mathbf{k} & 0 \\
0 & 0 & \mathbf{k}+\epsilon\mathbf{i}
\end{bmatrix}$, $L=\begin{bmatrix}
  0 \\ 0 \\ 0
\end{bmatrix}$ satisfy (\ref{SS}).
Thus, since $\Sigma_2=0$, we have $\hat{U}^*\hat{A}_e \hat{U}=\hat{U}^*\hat{A} \hat{U}$ and
\begin{align*}
&\hat{U}^*\hat{A}^* \hat{U} = \begin{bmatrix}
K^*\Sigma_1  & L^*\Sigma_1 \\
M^*\Sigma_2  & N^*\Sigma_2 
\end{bmatrix}=\begin{bmatrix}
1  & 0 & 0 & 0 \\
0 & -\frac{1}{\sqrt{5}}(\mathbf{j}+\epsilon\mathbf{k}) & 0 & 0 \\
0 & 0 & -\frac{1}{\sqrt{5}}(\mathbf{k}+\epsilon\mathbf{i}) & 0 \\
0 & 0 & 0 & 0 
\end{bmatrix},\\
&\hat{U}^*\hat{A}\sp{\scriptscriptstyle N} \hat{U}= \begin{bmatrix}
K^*\Sigma_1^{-1}  & 0 \\
L^*\Sigma_1^{-1}  & 0\end{bmatrix}=\begin{bmatrix}
1  & 0 & 0 & 0 \\
0 & -\sqrt{5}(\mathbf{j}+\epsilon\mathbf{k}) & 0 & 0 \\
0 & 0 & -\sqrt{5}(\mathbf{k}+\epsilon\mathbf{i}) & 0 \\
0 & 0 & 0 & 0 
\end{bmatrix}.
\end{align*}
Thus, $\hat{A}+\hat{A}_e+\hat{A}\sp{\scriptscriptstyle N}=3\hat{A}^*$, i.e., 
$\frac{1}{4}(\hat{A} + \hat{A}^*+\hat{A}\sp{\scriptscriptstyle N}+\hat{A}_e) = \hat{A}^*$.
However, since $\hat{A}=\hat{A}_e$ but $\hat{A}\neq \hat{A}^* \neq \hat{A}\sp{\scriptscriptstyle N}$, 
Theorem~\ref{t3} implies that $\hat{A}$ is not a dual orthogonal tripotent matrix.
\end{example}

Accordingly, studying $\frac{1}{4}(\hat{A} + \hat{A}^*+\hat{A}\sp{\scriptscriptstyle N}+\hat{A}_e) = \hat{A}^*$ and 
$\hat{A}\in\mathbb{DQ}_n^{3\text{-}OP}$  yields the following result.

\begin{theorem}\label{4pj}
  Let $ \hat{A} \in \mathbb{DQ}^{n \times n}$. Then the following statements are equivalent: \\
\noindent $(a)$  $\hat{A}$ is a dual orthogonal tripotent matrix;\\
 \noindent $(b)$ $\frac{1}{4}(\hat{A} + \hat{A}^*+\hat{A}\sp{\scriptscriptstyle N}+\hat{A}_e) = \hat{A}^*$,  and $\frac{1}{5}$ is not an eigenvalue of $\hat{A}\hat{A}^*$.
\end{theorem}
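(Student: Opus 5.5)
The forward implication $(a)\Rightarrow(b)$ is immediate. By Theorem~\ref{t3}, $\hat{A}=\hat{A}^*=\hat{A}\sp{\scriptscriptstyle N}=\hat{A}_e$, so the average equals $\hat{A}^*$. By Theorem~\ref{L2}, $\hat{A}\hat{A}^*=\hat{A}^2=\hat{U}\,\mathrm{diag}(I_{f+g},0)\hat{U}^*$, whose only eigenvalues are $0$ and $1$; in particular $\frac15$ is not one of them.

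For $(b)\Rightarrow(a)$ I will write the hypothesis as $\hat{A}+\hat{A}_e+\hat{A}\sp{\scriptscriptstyle N}=3\hat{A}^*$ and conjugate everything by $\hat{U}$, using (\ref{F1}), (\ref{Ee}) and (\ref{FQ}). Comparing blocks gives
\begin{align*}
&2\Sigma_1K+K^*\Sigma_1^{-1}=3K^*\Sigma_1, \qquad 2\Sigma_1L=3M^*\Sigma_2,\\
&\Sigma_2M+L^*\Sigma_1^{-1}=3L^*\Sigma_1, \qquad \Sigma_2N=3N^*\Sigma_2 .
\end{align*}
The steps are as follows.
\begin{enumerate}
\item Take standard parts as in the proof of Theorem~\ref{t5}. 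Since $\Sigma_2$ is infinitesimal, $\Sigma_{\mathrm{1s}}L_1=0$, so $L_1=0$.
\item By (\ref{ZXC2}), $L_1=0$ gives $M_1=0$ and $N_1$ unitary.
\item The $\epsilon$-part of the last equation gives $\Sigma_{\mathrm{2d}}N_1=3N_1^*\Sigma_{\mathrm{2d}}$, hence $\Sigma_{\mathrm{2d}}N_1=9\Sigma_{\mathrm{2d}}N_1$. Thus $\Sigma_{\mathrm{2d}}=0$, i.e.\ $\Sigma_2=0$.
\item Now $2\Sigma_1L=0$ gives $L=0$, and (\ref{ZXC}) gives $M=0$ and $KK^*=K^*K=I_r$.
\end{enumerate}
Everything therefore reduces to the single equation
$$2\Sigma_1K=K^*D,\qquad D:=3\Sigma_1-\Sigma_1^{-1},$$
with $K$ dual unitary. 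The goal is to show $\Sigma_1=I_r$, after which the equation reads $2K=2K^*$, and Theorem~\ref{t1}(c) finishes the proof.

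To handle this equation I will argue entrywise, in the spirit of Theorem~\ref{ll}. Write $K=(k_{ij})$ and $d_i=3\sigma_i-\sigma_i^{-1}$. The $(i,j)$ entry gives $2\sigma_ik_{ij}=k_{ji}^*d_j$. The $(j,i)$ entry, after conjugation, gives $2\sigma_jk_{ji}^*=d_ik_{ij}$. Eliminating $k_{ji}^*$ yields
$$(4\sigma_i\sigma_j-d_id_j)\,k_{ij}=0 .$$
On standard parts, with $x=\sigma_{i,s}$ and $y=\sigma_{j,s}$, a nonzero $(k_{ij})_s$ forces $(5x^2-3)(5y^2-3)=4$. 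On the diagonal this becomes $(5x^2-3)^2=4$, i.e.\ $x^2\in\{1,\frac15\}$. The same elimination applied to the $\epsilon$-parts, exactly as in Theorem~\ref{ll}, should then pin down the dual parts, giving $\sigma_i\in\{1,\frac1{\sqrt5}\}$ on every index $i$ with $(k_{ii})_s\neq0$.

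The main obstacle is the off-diagonal case: indices $i\neq j$ with $\sigma_{i,s}\neq\sigma_{j,s}$ but $(5x^2-3)(5y^2-3)=4$ are not excluded by the entrywise relation alone. Here I will use the unitarity of $K$ globally. From $2\Sigma_1K=K^*D$ and $K^*K=I_r$ one gets $D^2=4K^*\Sigma_1^2K$, so $D^2$ and $4\Sigma_1^2$ are unitarily similar. Hence their standard diagonals agree as multisets:
$$\{(3x_i-x_i^{-1})^2\}=\{4x_i^2\}.$$
I will compare the largest and smallest entries of these two multisets (the map $x\mapsto(3x-x^{-1})^2/(4x^2)$ is monotone on the relevant ranges), and also the trace identity
$$\sum_i\frac{(5x_i^2-1)(x_i^2-1)}{x_i^2}=0 .$$
From these I expect to conclude that every $x_i^2\in\{1,\frac15\}$, and then to lift this to the dual parts by the argument of Theorem~\ref{QQ9}.

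Finally, $\hat{A}\hat{A}^*=\hat{U}\,\mathrm{diag}(\Sigma_1^2,0)\hat{U}^*$, so its eigenvalues are the $\sigma_i^2$. The assumption that $\frac15$ is not an eigenvalue excludes $\sigma_i=\frac1{\sqrt5}$, leaving $\Sigma_1=I_r$ and hence $K=K^*$, as required.
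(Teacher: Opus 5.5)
\textbf{There is a genuine gap, at the step you flag as the main obstacle.} The parts before and after it are correct: the forward direction, the block equations, steps 1--4 reducing everything to $2\Sigma_1K=K^*D$ with $KK^*=K^*K=I_r$, and the final eigenvalue step. Doing the reduction at the dual level is also tidier than the paper's split into standard and infinitesimal parts. The problem is that the data you plan to use at that step cannot determine $\Sigma_{1s}$.

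Put $t_i=x_i^2$. On standard parts, the unitary similarity $D^2=4K^*\Sigma_1^2K$ only says that $\{(3t_i-1)^2/t_i\}=\{4t_i\}$ as multisets. Equivalently, $\phi(t)=(3t-1)^2/(4t)$ permutes the $t_i$. Besides its fixed points $1$ and $\frac15$, $\phi$ has a $2$-cycle. The roots $t_{1,2}=(13\pm2\sqrt{13})/39$ of $39t^2-26t+3=0$ satisfy $t_1+t_2=\frac23$ and $t_1t_2=\frac1{13}$. Hence $(3t_1-1)^2/t_1=9t_1-6+13t_2=4t_2$, and symmetrically for $t_2$.

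Concretely, take $\Sigma_1=\mathrm{diag}(\sqrt{t_1},\sqrt{t_2})$ and $K$ the $2\times2$ swap matrix. Then $K$ is unitary and $D^2=4K^*\Sigma_1^2K$ holds exactly. The trace identity and every largest/smallest comparison hold too, yet $t_i\notin\{1,\frac15\}$. So no argument built from these data can finish the proof.

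What excludes this pair is the sign information you discard when passing to $D^2$. Since $t_1>\frac13>t_2$, we have $d_1d_2<0$, so $4\sigma_1\sigma_2-d_1d_2$ is appreciable. Your off-diagonal relation then forces $(k_{12})_s=0$. A block analysis along these lines could rescue the entrywise route, but that is not what you proposed.

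The missing idea, which is how the paper proceeds, is to square $X:=2\Sigma_1K=K^*D$ using both factorizations, rather than forming $X^*X$. With $KK^*=I_r$,
\[
X^2=(2\Sigma_1K)(K^*D)=2(3\Sigma_1^2-I_r),\qquad X^2=(K^*D)(2\Sigma_1K)=2K^*(3\Sigma_1^2-I_r)K .
\]
Hence $\Sigma_1^2K=K\Sigma_1^2$, and Theorem~\ref{ll} gives $\Sigma_1K=K\Sigma_1$.

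Left-multiplying $2\Sigma_1K=K^*D$ by $K$ now gives $2K^2=3I_r-\Sigma_1^{-2}$. Then $K^2(K^*)^2=I_r$ yields $(\Sigma_1^{-2}-I_r)(\Sigma_1^{-2}-5I_r)=0$. In each diagonal position one factor has nonzero standard part and is therefore invertible. So $\sigma_i\in\{1,\frac{1}{\sqrt5}\}$ exactly as dual numbers, which settles the infinitesimal parts with no separate lifting argument. Your eigenvalue step then gives $\Sigma_1=I_r$ and $K=K^*$.
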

\begin{proof}
   We have that $\hat{A} + \hat{A}_e + \hat{A}\sp{\scriptscriptstyle N}=3\hat{A}^*$. If we assume that $\hat{A}$ is given by $(\ref{ab})$, then by (\ref{abbe}) and $(\ref{NN})$ we have  
\begin{align}
          & 2 \Sigma_{\mathrm{1s}}L_1 = 0, \quad 2\Sigma_{\mathrm{1s}}K_1+K_1^*\Sigma_{\mathrm{1s}}^{-1}=3K_1^*\Sigma_{\mathrm{1s}}; \label{QQ1}\\
          & 3(L_1^*\Sigma_{\mathrm{1d}}+L_2^*\Sigma_{\mathrm{1s}})=\Sigma_{\mathrm{2d}}M_1+L_2^*\Sigma_{\mathrm{1s}}^{-1} - L_1^*\Sigma_{\mathrm{1s}}^{-2}\Sigma_{\mathrm{1d}}, \ \ \Sigma_{\mathrm{2d}}N_1=3N_1^*\Sigma_{\mathrm{2d}}; \label{QQ3}\\
&  2(\Sigma_{\mathrm{1s}}K_2+\Sigma_{\mathrm{1d}}K_1)+K_2^*\Sigma_{\mathrm{1s}}^{-1} - K_1^*\Sigma_{\mathrm{1s}}^{-2}\Sigma_{\mathrm{1d}} = 3(K_2^*\Sigma_{\mathrm{1s}}+K_1^*\Sigma_{\mathrm{1d}}). \label{QQ2}
        \end{align}
From (\ref{QQ1}), we obtain $L_1=0$. Hence, $2\Sigma_{\mathrm{1s}}K_1=K_1^*(3\Sigma_{\mathrm{1s}}-\Sigma_{\mathrm{1s}}^{-1})$, which implies  $ 2\Sigma_{\mathrm{1s}}K_1K_1^*(3\Sigma_{\mathrm{1s}}-\Sigma_{\mathrm{1s}}^{-1})=K_1^*(3\Sigma_{\mathrm{1s}}-\Sigma_{\mathrm{1s}}^{-1})2\Sigma_{\mathrm{1s}}K_1$, 
i.e., $K_1^*(3\Sigma_{\mathrm{1s}}^{2}-I_r)K=3\Sigma_{\mathrm{1s}}^{2}-I_r$. By Corollary~\ref{C2.7},  $K_1\Sigma_{\mathrm{1s}}=\Sigma_{\mathrm{1s}} K_1$. Combining these with (\ref{QQ1}), we conclude 
$$ 2K_1^2=3I_r-\Sigma_{\mathrm{1s}}^{-2} \quad \Rightarrow \quad 4I_r=4K_1^2(K_1^*)^2=(3I_r-\Sigma_{\mathrm{1s}}^{-2})^2,$$
whence $\Sigma_{\mathrm{1s}}^{-4}-6\Sigma_{\mathrm{1s}}^{-2}+5I_r=0$.
Let $\lambda_i=\lambda_{i,s}+\epsilon \lambda_{i,d}$, which $\lambda_{i,s},\lambda_{i,d} \in \mathbb{R}$.
Thus, we have 
$$ \lambda_i^{-4} -6\lambda_i^{-2}+5=0  \ \ \Leftrightarrow  \ \  \lambda_{i,s}^{-4}-6\lambda_{i,s}^{-2}+5=0 \ \ \text{and}  \ \ 4\lambda_{i,d}(3\lambda_{i,s}^{-3}-\lambda_{i,s}^{-5})=0.$$
It follows that $\lambda_{i,s}=1$ or $\lambda_{i,s}=\frac{1}{\sqrt{5}}$. 
Furthermore, we note that $\lambda_{i,d}=0$ regardless of whether $\lambda_{i,s}=1$ or $\lambda_{i,s}=\frac{1}{\sqrt{5}}$.
Since $\frac{1}{5}$  is not an eigenvalue of $\hat{A}\hat{A}^*$, it follows that $\lambda_{i,s}\neq \frac{1}{\sqrt{5}}$.
Thus, we must have $\lambda_{i,s}=1$, which further implies $\Sigma_{\mathrm{1s}}=I_r$.
By (\ref{QQ1}), we therefore conclude that $K_1=K_1^*$.

Additionally, $L_1=0$ implies $M_1=0$.
Then, combined with  $\Sigma_{\mathrm{1s}}=I_r$, it follows from (\ref{QQ3}) that  $L_2=0$. 
Since $\Sigma_{\mathrm{2d}}N_1=3N_1^*\Sigma_{\mathrm{2d}}$, we deduce $N_1^*\Sigma_{\mathrm{2d}}=3\Sigma_{\mathrm{2d}}N_1$,
and thus $\Sigma_{\mathrm{2d}}N_1=3N_1^*\Sigma_{\mathrm{2d}}=9\Sigma_{\mathrm{2d}}N_1$, i.e., $8\Sigma_{\mathrm{2d}}N_1=0$.
 Together with $L_1=0$, this yields $N_1N_1^*=I_{n-r}$,  whence $\Sigma_{\mathrm{2d}}=0$.

Considering (\ref{QQ2}) together with $\Sigma_{\mathrm{1s}}=I_r$, we obtain
$$ 2(K_2+\Sigma_{\mathrm{1d}}K_1)+K_2^* - K_1^*\Sigma_{\mathrm{1d}} = 3(K_2^*+K_1^*\Sigma_{\mathrm{1d}}) \ \ \Rightarrow \ \ K_2-K_2^*=2K_1^*\Sigma_{\mathrm{1d}}-\Sigma_{\mathrm{1d}}K_1.$$
Adding this to its conjugate transpose gives $K_1^*\Sigma_{\mathrm{1d}}=-\Sigma_{\mathrm{1d}}K_1$. 
Thus, similar to Theorem~\ref{t5} (\ref{WW4}), we obtain $\Sigma_{\mathrm{1d}}=0$ and $K_2=K_2^*$.

From the above, $\Sigma_1=\Sigma_{\mathrm{1s}}+\epsilon \Sigma_{\mathrm{1d}}=I_r, \ \ \Sigma_2=\epsilon \Sigma_{\mathrm{2d}}=0$,
$$ L=L_1+\epsilon L_2=0, \quad  K=K_1+\epsilon K_2=K_1^*+\epsilon K_2^*=K^*,$$
and from (\ref{ZXC}), $M=0$.
Hence, by Theorem~\ref{t1}, $\hat{A}\in\mathbb{DQ}_n^{3\text{-}OP}$.
\end{proof} 

Motivated by Theorem~\ref{t5} $(b)$, it is natural to investigate whether the mean equality for three such elements provides an effective characterization of dual orthogonal tripotent matrices.
\begin{theorem}\label{t4}  
  Let $ \hat{A} \in \mathbb{DQ}^{n \times n}$. Then the following statements are equivalent: \\
\noindent $(a)$  $\hat{A}$ is a dual orthogonal tripotent matrix;\\
\noindent $(b)$ $\frac{1}{3}(\hat{A} + \hat{A}^*+\hat{A}\sp{\scriptscriptstyle N}) = \hat{A}$;\\
\noindent $(c)$ $\frac{1}{3}(\hat{A}_e + \hat{A}^*+\hat{A}\sp{\scriptscriptstyle N}) = \hat{A}$;\\
\noindent $(d)$  $\frac{1}{3}(\hat{A}_e + \hat{A}^*+\hat{A}\sp{\scriptscriptstyle N}) = A_e$;\\
\noindent $(e)$ $\frac{1}{3}(\hat{A} + \hat{A}^*+\hat{A}\sp{\scriptscriptstyle N}) = \hat{A}\sp{\scriptscriptstyle N}$, and $\hat{A}=\hat{A}_e$;\\
\noindent $(f)$ $\frac{1}{3}(\hat{A}_e + \hat{A}^*+\hat{A}\sp{\scriptscriptstyle N}) = \hat{A}\sp{\scriptscriptstyle N}$, and $\hat{A}=\hat{A}_e$;\\
\noindent $(g)$ $\frac{1}{3}(\hat{A} + \hat{A}^*+\hat{A}\sp{\scriptscriptstyle N}) = \hat{A}^*$,  and $\frac{1}{3}$ is not an eigenvalue of $\hat{A}\hat{A}^*$;\\
\noindent $(h)$ $\frac{1}{3}(\hat{A}_e + \hat{A}^*+\hat{A}\sp{\scriptscriptstyle N}) = \hat{A}^*$,  and  $\frac{1}{3}$ is not an eigenvalue of $\hat{A}\hat{A}^*$.
\end{theorem}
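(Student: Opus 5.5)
The plan follows the template of Theorems~\ref{t5}, \ref{tr1} and \ref{4pj}. The implication from $(a)$ to each of $(b)$--$(h)$ is immediate from Theorem~\ref{t3}$(d)$: if $\hat{A}\in\mathbb{DQ}_n^{3\text{-}OP}$, then $\hat{A}=\hat{A}^*=\hat{A}\sp{\scriptscriptstyle N}=\hat{A}_e$. Each average is therefore $\hat{A}$, and $\hat{A}=\hat{A}_e$ holds. Moreover, by Theorem~\ref{L2}, $\hat{A}\hat{A}^*=\hat{A}^2$ has eigenvalues only in $\{0,1\}$, so $\frac13$ is not an eigenvalue.

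For the converses, I will write $\hat{A}$ in the form (\ref{ab}) and $\hat{A}_e$, $\hat{A}\sp{\scriptscriptstyle N}$ via (\ref{abbe}) and (\ref{NN}). Each hypothesis then becomes a linear identity among these block forms, and I compare standard and infinitesimal parts block by block.

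\textbf{Step 1.} The $(1,2)$ standard block always gives $c\,\Sigma_{\mathrm{1s}}L_1=0$ for some nonzero constant $c$, hence $L_1=0$. By (\ref{ZXC1})--(\ref{ZXC2}) this gives $M_1=0$, $N_1N_1^*=I$ and $K_1$ unitary.

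\textbf{Step 2.} The $(1,1)$ standard block gives a relation $\alpha\Sigma_{\mathrm{1s}}K_1=K_1^*f(\Sigma_{\mathrm{1s}})$ with $f$ a combination of $\Sigma_{\mathrm{1s}}$ and $\Sigma_{\mathrm{1s}}^{-1}$. For example, in $(b)$ this is $\Sigma_{\mathrm{1s}}K_1=K_1^*(\Sigma_{\mathrm{1s}}+\Sigma_{\mathrm{1s}}^{-1})$, and in $(g)$/$(h)$ it is $\Sigma_{\mathrm{1s}}K_1+K_1^*\Sigma_{\mathrm{1s}}^{-1}=2K_1^*\Sigma_{\mathrm{1s}}$. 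Multiplying by the adjoint as in the proof of Theorem~\ref{t5} gives a commutation $(\Sigma_{\mathrm{1s}}^2+cI_r)K_1=K_1(\Sigma_{\mathrm{1s}}^2+cI_r)$. Corollary~\ref{C2.7} then yields $\Sigma_{\mathrm{1s}}K_1=K_1\Sigma_{\mathrm{1s}}$, so $\alpha K_1^2$ is a scalar function of $\Sigma_{\mathrm{1s}}$. Using $K_1^2(K_1^*)^2=I$ I get a polynomial equation in $\Sigma_{\mathrm{1s}}^{-2}$:
\begin{itemize}
\item in $(b)$, $(\Sigma_{\mathrm{1s}}^{-2}+2I)(\Sigma_{\mathrm{1s}}^{-2}-I)=0$, since $K_1^2=I+\Sigma_{\mathrm{1s}}^{-2}$ up to scaling;
\item in $(g)$, $K_1^2=2I-\Sigma_{\mathrm{1s}}^{-2}$, giving $(\Sigma_{\mathrm{1s}}^{-2}-I)(\Sigma_{\mathrm{1s}}^{-2}-3I)=0$.
\end{itemize}
In case $(g)$ the root $\Sigma_{\mathrm{1s}}^{2}=\frac13$ would make $\frac13$ an eigenvalue of $\hat{A}\hat{A}^*$. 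This is exactly why the extra hypothesis appears, and it excludes that root. In all cases I obtain $\Sigma_{\mathrm{1s}}=I_r$ and $K_1=K_1^*$.

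For $(e)$ and $(f)$ I will instead argue at the dual level as in Theorem~\ref{tr1}. The condition $\hat{A}=\hat{A}_e$ gives $\Sigma_2=0$, and the equations give $L=0$ and $\Sigma_1K=K^*(2\Sigma_1^{-1}-\Sigma_1)$. Theorem~\ref{ll}$(b)$ gives commutation, hence $K^2=2\Sigma_1^{-2}-I$ and $(2\Sigma_1^{-2}-I)^2=I$. This forces $\Sigma_1^{-2}=I$ (the other root $\Sigma_1^{-2}=0$ is impossible), so Theorem~\ref{QQ9} gives $\Sigma_1=I_r$ and $K=K^*$, and Theorem~\ref{t1}$(c)$ concludes.

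\textbf{Step 3.} The $(2,1)$/$(1,2)$ infinitesimal blocks, with $\Sigma_{\mathrm{1s}}=I$, $L_1=0$, $M_1=0$, give $L_2=0$. In cases where $\hat{A}^*$ (rather than $\hat{A}_e$) carries the $(2,2)$ block, that block gives $\beta\Sigma_{\mathrm{2d}}N_1=\gamma N_1^*\Sigma_{\mathrm{2d}}$ with $|\beta|\neq|\gamma|$; iterating gives $\Sigma_{\mathrm{2d}}N_1=0$, and since $N_1$ is unitary, $\Sigma_{\mathrm{2d}}=0$. In $(c)$ and $(d)$, where $\hat{A}_e$ is used, the block reads directly $N_1^*\Sigma_{\mathrm{2d}}=3\Sigma_{\mathrm{2d}}N_1$ or $N_1^*\Sigma_{\mathrm{2d}}=0$, with the same conclusion.

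\textbf{Step 4 (main obstacle).} The $(1,1)$ infinitesimal block yields an equation of the shape $K_2-K_2^*=aK_1^*\Sigma_{\mathrm{1d}}+b\Sigma_{\mathrm{1d}}K_1$. Adding its conjugate transpose gives an anticommutation $K_1\Sigma_{\mathrm{1d}}=-\Sigma_{\mathrm{1d}}K_1$, after which the equation reduces to the form (\ref{WW4}). I then repeat the argument of Theorem~\ref{t5}: use $K_1K_2^*+K_2K_1^*=0$ from (\ref{ZXC1}), multiply on either side by $K_1$, and add to get $\Sigma_{\mathrm{1d}}=0$ and $K_2=K_2^*$. I expect this to be the delicate point. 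The coefficients differ per case, and one must check that after the anticommutation substitution the combination of coefficients is nonzero so that $\Sigma_{\mathrm{1d}}=0$ follows; a degenerate cancellation would need separate handling.

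\textbf{Conclusion.} With $\Sigma_1=I_r$, $\Sigma_2=0$, $L=0$, $M=0$ (by (\ref{ZXC})) and $K=K^*$, Theorem~\ref{t1}$(c)$ gives $\hat{A}\in\mathbb{DQ}_n^{3\text{-}OP}$.
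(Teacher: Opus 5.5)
Your route is the block-level template the paper points to (Theorems~\ref{t5} and~\ref{4pj}), but your treatment of $(g)$ and $(h)$ has a genuine gap.

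You exclude the root $\sigma_{i,s}^2=\frac13$ of $(\Sigma_{\mathrm{1s}}^{-2}-I_r)(\Sigma_{\mathrm{1s}}^{-2}-3I_r)=0$ by saying it makes $\frac13$ an eigenvalue of $\hat{A}\hat{A}^*$. By (\ref{SS}) and $\Sigma_2^2=0$ one has $\hat{A}\hat{A}^*=\hat{U}\,\mathrm{diag}(\Sigma_1^2,0)\,\hat{U}^*$, whose eigenvalues are the dual numbers $\sigma_i^2=\sigma_{i,s}^2+2\epsilon\,\sigma_{i,s}\sigma_{i,d}$. An index with $\sigma_{i,s}=1/\sqrt3$ and $\sigma_{i,d}\neq0$ therefore gives the eigenvalue $\frac13+\frac{2}{\sqrt3}\sigma_{i,d}\epsilon\neq\frac13$, which the hypothesis does not rule out. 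Nothing in your proposal controls $\sigma_{i,d}$ at such an index: Step~2 sees only standard parts, and Step~4 runs only after $\Sigma_{\mathrm{1s}}=I_r$ is assumed. This is exactly the point the proof of Theorem~\ref{4pj} addresses by showing $\lambda_{i,d}=0$ at both roots.

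The repair is to argue at the dual level, as you already do for $(e)$ and $(f)$:
\begin{enumerate}
\item After $L_1=0$, the $(2,2)$ infinitesimal block ($\Sigma_{\mathrm{2d}}N_1=2N_1^*\Sigma_{\mathrm{2d}}$ in $(g)$, $N_1^*\Sigma_{\mathrm{2d}}=0$ in $(h)$) gives $\Sigma_2=0$.
\item The $(1,2)$ block then reads $\Sigma_1L=0$, so $L=0$ and $KK^*=K^*K=I_r$.
\item Compute $X^2$ two ways for $X=\Sigma_1K=K^*(2\Sigma_1-\Sigma_1^{-1})$. This gives $(2\Sigma_1^2-I_r)K=K(2\Sigma_1^2-I_r)$, i.e.\ $\Sigma_1^2K=K\Sigma_1^2$, hence $\Sigma_1K=K\Sigma_1$ by Theorem~\ref{ll}$(a)$.
\item It follows that $K^2=2I_r-\Sigma_1^{-2}$, and $(\Sigma_1^{-2}-I_r)(\Sigma_1^{-2}-3I_r)=0$ holds as a dual identity.
\item In each diagonal entry one factor has standard part $\pm2$ and is invertible, so $\sigma_i^{-2}\in\{1,3\}$ exactly.
\item Now $\sigma_i^2=\frac13$ really is an eigenvalue, so the hypothesis forces $\Sigma_1^{-2}=I_r$. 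Theorem~\ref{QQ9} gives $\Sigma_1=I_r$, and $K^2=I_r=KK^*$ gives $K=K^*$.
\end{enumerate}

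Two smaller points.

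First, in $(b)$ the $(1,1)$ block is $2\Sigma_1K=K^*(\Sigma_1+\Sigma_1^{-1})$; you dropped the factor $2$. So $2K_1^2=I_r+\Sigma_{\mathrm{1s}}^{-2}$, and the resulting equation is $(\Sigma_{\mathrm{1s}}^{-2}+3I_r)(\Sigma_{\mathrm{1s}}^{-2}-I_r)=0$, not $(\Sigma_{\mathrm{1s}}^{-2}+2I_r)(\Sigma_{\mathrm{1s}}^{-2}-I_r)=0$. The conclusion $\Sigma_{\mathrm{1s}}=I_r$ survives, and $(c)$, $(d)$ have the same $(1,1)$ block.

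Second, the Step~4 you flag does not degenerate. The infinitesimal equations are $K_2^*-K_2=\Sigma_{\mathrm{1d}}K_1$ in $(b)$--$(d)$ (exactly (\ref{WW4})) and $K_2-K_2^*=3K_1\Sigma_{\mathrm{1d}}-\Sigma_{\mathrm{1d}}K_1$ in $(g)$, $(h)$. So the anticommutation follows, and the reduced equation $K_2-K_2^*=cK_1\Sigma_{\mathrm{1d}}$ has $c=1$, respectively $c=4$, whence $2c\,\Sigma_{\mathrm{1d}}=0$.

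That said, the dual-level argument above also works for $(b)$--$(d)$. There $\Sigma_2=0$ and $L=0$ are obtained just as directly, and $2K^2=I_r+\Sigma_1^{-2}$ yields $(\Sigma_1^{-2}+3I_r)(\Sigma_1^{-2}-I_r)=0$, so $\Sigma_1=I_r$ by Theorem~\ref{QQ9}. This makes the block-level Step~4 unnecessary altogether.
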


\begin{proof}
Similarly to Theorem~\ref{t5} and Theorem~\ref{4pj}, the proof is omitted.
\end{proof}

Likewise, we may also characterize dual orthogonal tripotent matrices by constructing mean equations formulated in terms of  $\hat{A}$, $\hat{A}_e$, $\hat{A}^*$, $\hat{A}\sp{\scriptscriptstyle N}$ as well as their products. 

\begin{theorem}\label{2pj}
  Let $ \hat{A} \in \mathbb{DQ}^{n \times n}$. Then the following statements are equivalent: \\
\noindent $(a)$  $\hat{A}$ is a dual orthogonal tripotent matrix;\quad \ \ \
\noindent $(b)$  $\frac{1}{2}(\hat{A}+\hat{A}\hat{A}^*\hat{A})=\hat{A}\sp{\scriptscriptstyle N}$;\\
\noindent $(c)$  $\frac{1}{2}(\hat{A}+\hat{A}\sp{\scriptscriptstyle N}\hat{A}\hat{A}^*)=\hat{A}\sp{\scriptscriptstyle N}$;\   \ \quad \quad \quad \quad \quad \quad \quad \quad \quad
\noindent $(d)$  $\frac{1}{2}(\hat{A}\sp{\scriptscriptstyle N}+\hat{A}\sp{\scriptscriptstyle N}\hat{A}\hat{A}^*)=\hat{A}$;\\
\noindent $(e)$  $\frac{1}{2}(\hat{A}+\hat{A}^2\hat{A}^*)=\hat{A}\sp{\scriptscriptstyle N}$; \    \quad \quad \quad \quad \quad \quad \quad \quad \quad \ \
\noindent $(f)$  $\frac{1}{2}(\hat{A}+\hat{A}\hat{A}^*\hat{A}\sp{\scriptscriptstyle N})=\hat{A}\sp{\scriptscriptstyle N}$;\\
\noindent $(g)$  $\frac{1}{2}(\hat{A}\sp{\scriptscriptstyle N}+\hat{A}\hat{A}^*\hat{A}\sp{\scriptscriptstyle N})=\hat{A}$.
\end{theorem}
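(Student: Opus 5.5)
The necessity direction $(a)\Rightarrow(b)$--$(g)$ is immediate. By Theorem~\ref{t3}, a dual orthogonal tripotent $\hat{A}$ satisfies $\hat{A}=\hat{A}^*=\hat{A}\sp{\scriptscriptstyle N}$ and $\hat{A}^3=\hat{A}$. Every product appearing in $(b)$--$(g)$ is then a product of three copies of $\hat{A}$, so it equals $\hat{A}^3=\hat{A}$, and each averaged identity reduces to $\hat{A}=\hat{A}$. For sufficiency, I would work in the dual Hartwig--Spindelb\"ock form (\ref{F1}) together with (\ref{FQ}). The goal is to verify the conditions of Theorem~\ref{t1}$(c)$: $\Sigma_1=I_r$, $\Sigma_2=0$, $L=M=0$ and $K=K^*$.

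I would treat $(b)$ in full and the others analogously. Using (\ref{SS}), one computes
\[
\hat{A}\hat{A}^*=\hat{U}\begin{bmatrix}\Sigma_1^2 & 0\\ 0 & \Sigma_2^2\end{bmatrix}\hat{U}^*,
\]
and $\Sigma_2^2=0$ because $\epsilon^2=0$. Hence
\[
\hat{A}\hat{A}^*\hat{A}=\hat{U}\begin{bmatrix}\Sigma_1^3K & \Sigma_1^3L\\ 0&0\end{bmatrix}\hat{U}^*,
\]
and the analogous block forms of $\hat{A}\sp{\scriptscriptstyle N}\hat{A}\hat{A}^*$, $\hat{A}^2\hat{A}^*$ and $\hat{A}\hat{A}^*\hat{A}\sp{\scriptscriptstyle N}$ follow in the same way. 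Comparing blocks in $(b)$ gives three things.
\begin{itemize}
\item The $(2,1)$ and $(2,2)$ blocks give $\Sigma_2M=0$ and $\Sigma_2N=L^*\Sigma_1^{-1}\cdot 0=0$. Since $MM^*+NN^*=I$ by (\ref{SS}), this yields $\Sigma_2=0$.
\item The $(1,2)$ block gives $\Sigma_1L+\Sigma_1^3L=0$. Since $I_r+\Sigma_1^2$ is invertible, $L=0$, and then $M=0$ and $KK^*=K^*K=I_r$ by (\ref{ZXC}).
\item The $(1,1)$ block gives $(\Sigma_1+\Sigma_1^3)K=2K^*\Sigma_1^{-1}$, i.e. $\Sigma_1(I_r+\Sigma_1^2)\,\Sigma_1K=2K^*$ after right-multiplying appropriately.
\end{itemize}

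The key step is extracting $\Sigma_1=I_r$ from the $(1,1)$ identity, which I write as $D K=2K^*\Sigma_1^{-1}$ with $D=\Sigma_1+\Sigma_1^3$. Multiplying this identity by its conjugate transpose and using unitarity of $K$ gives
\[
DKK^*D=4\Sigma_1^{-2}, \qquad\text{so}\qquad D^2=4\Sigma_1^{-2}.
\]
This is a scalar dual-number identity on each diagonal entry: $\sigma^4(1+\sigma^2)^2=4$. Taking square roots of positive appreciable dual numbers gives $\sigma^2(1+\sigma^2)=2$, i.e. $(\sigma^2-1)(\sigma^2+2)=0$. Since $\sigma^2+2$ is invertible, $\sigma^2=1$, and Theorem~\ref{QQ9} (or positivity) gives $\sigma=1$. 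Thus $\Sigma_1=I_r$, the $(1,1)$ identity becomes $2K=2K^*$, and Theorem~\ref{t1}$(c)$ finishes the proof.

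The main obstacle is the orderings in the other items, where $\Sigma_1$ and $K$ appear interleaved so that the products $DD^*$ do not decouple directly. For example, $(c)$ gives $\Sigma_1K+K^*\Sigma_1=2K^*\Sigma_1^{-1}$ in the $(1,1)$ block. There I would first derive a commutation relation, as in the proofs of Theorems~\ref{t5} and~\ref{tr1}: multiply the block identity on the left by $K$ (or by $K^*$), compare with its conjugate transpose, and obtain $(\Sigma_1^p+\Sigma_1^q)K=K(\Sigma_1^p+\Sigma_1^q)$ for suitable $p,q$. Theorem~\ref{ll} then upgrades this to $\Sigma_1K=K\Sigma_1$. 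Once $K$ commutes with $\Sigma_1$, each identity becomes a polynomial relation such as $K^2=f(\Sigma_1)$. Multiplying by $(K^*)^2$ gives $f(\Sigma_1)^2=I_r$. One then checks, entrywise over dual numbers as in Theorem~\ref{4pj}, that the only positive appreciable root is $1$; if a spurious root appeared, the statement would need an extra hypothesis, which I expect does not happen here. Finally, $K=K^*$ follows back from the block identity.
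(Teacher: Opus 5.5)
Your overall route (block comparison via (\ref{F1}) and (\ref{FQ}), then Theorem~\ref{t1}$(c)$) is the paper's. However, the key step in the case you treat in full, $(b)$, is not valid as written. From $DK=2K^*\Sigma_1^{-1}$ with $D=\Sigma_1+\Sigma_1^3$, the two legitimate products are
\[
DKK^*D=(2K^*\Sigma_1^{-1})(2K^*\Sigma_1^{-1})^*=4K^*\Sigma_1^{-2}K
\qquad\text{and}\qquad
K^*D^2K=(2K^*\Sigma_1^{-1})^*(2K^*\Sigma_1^{-1})=4\Sigma_1^{-2}.
\]
Your identity $DKK^*D=4\Sigma_1^{-2}$ pairs the left side of the first with the right side of the second. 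In both correct versions $K$ is still sandwiched between diagonal factors, so the entrywise equation $\sigma^4(1+\sigma^2)^2=4$ is not available until you know $\Sigma_1K=K\Sigma_1$. The same hidden assumption sits in your rewriting $\Sigma_1(I_r+\Sigma_1^2)\Sigma_1K=2K^*$; right-multiplying by $\Sigma_1$ actually gives $DK\Sigma_1=2K^*$. So $(b)$ does not ``decouple directly'': it needs exactly the commutation step you reserved for $(c)$--$(g)$.

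The repair is what the paper does. Put $X=DK=2K^*\Sigma_1^{-1}$ and compute $X^2$ both ways: $DK\cdot 2K^*\Sigma_1^{-1}=2(I_r+\Sigma_1^2)$ and $2K^*\Sigma_1^{-1}\cdot DK=2K^*(I_r+\Sigma_1^2)K$. Hence $(I_r+\Sigma_1^2)K=K(I_r+\Sigma_1^2)$, and Theorem~\ref{ll} gives $\Sigma_1K=K\Sigma_1$. Then $DK\Sigma_1=2K^*$ yields $\Sigma_1^2+\Sigma_1^4=2(K^*)^2=2K^2$, so $(\Sigma_1^2+\Sigma_1^4)^2=4K^2(K^*)^2=4I_r$, and your scalar argument $(\sigma^2-1)(\sigma^2+2)=0$ finishes.

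Two smaller points. First, the $(2,1)$ block reads $\Sigma_2M=2L^*\Sigma_1^{-1}$, so $\Sigma_2M=0$ only after $L=0$ has been obtained from the $(1,2)$ block. Second, the possibility of spurious roots in $(c)$--$(g)$ should be checked rather than hoped away. It does not occur: after commutation, with $x=\sigma_i^{-2}$, items $(c)$ and $(f)$ give $4x(x-1)=0$, items $(d)$ and $(g)$ give $(x-1)(x+3)=0$, and $(e)$ reduces to the equation of $(b)$, so $x=1$ in every case.
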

\begin{proof}
  By Theorem \ref{t3} together with (\ref{F1}) and (\ref{FQ}),  condition $(a)$ implies all of $(b)$--$(g)$. We prove only the implication $(b) \Rightarrow (a)$; the remaining cases follow similarly. \\
 \noindent $(b)\Rightarrow(a):$  From (\ref{F1}) and (\ref{FQ}), together with $\hat{A}+\hat{A}\hat{A}^*\hat{A}=2\hat{A}\sp{\scriptscriptstyle N}$, we obtain
 \begin{align}
  &(\Sigma_1+\Sigma_1^3)K=2K^*\Sigma_1^{-1}, \quad  (\Sigma_1+\Sigma_1^3)L=0, \label{FF11} \\
  &(\Sigma_2+\Sigma_2^3) M=2L^*\Sigma_1^{-1}, \quad  (\Sigma_2+\Sigma_2^3) N=0. \notag
 \end{align}
 Note that $\Sigma_2$ is an infinitesimal dual diagonal matrix with all diagonal entries positive, which implies $\Sigma_2^3=0$.
 Then the second identity in (\ref{FF11}) yields  $L=0$, which further implies $\Sigma_2 M=0$.  
 Consequently, it follows readily from (\ref{SS}) and $\Sigma_2 N=0$ that  $\Sigma_2=0$.
 We next turn to the first identity in (\ref{FF11}), which gives $2(\Sigma_1+\Sigma_1^3)KK^*\Sigma_1^{-1}=2K^*\Sigma_1^{-1}(\Sigma_1+\Sigma_1^3)K$, i.e., $(I_r+\Sigma_1^2)K=K(I_r+\Sigma_1^2)$.
 Then Theorem~\ref{ll} implies $\Sigma_1 K=K\Sigma_1$. 
Combining this with the first identity in (\ref{FF11}), we obtain $ \Sigma_1^2+\Sigma_1^4=2K^2$.
Accordingly, $(\Sigma_1^2+\Sigma_1^4)^2=4K^2(K^*)^2=4I_r$, so
  $$  (\Sigma_1-I_r)(\Sigma_1+I_r)(\Sigma_1^2+2I_r)(\Sigma_1^4+\Sigma_1^2+2I_r)=0.$$
Since $\Sigma_1+I_r$, $\Sigma_1^2+2I_r$ and $\Sigma_1^4+\Sigma_1^2+2I_r$ are invertible, we have  $\Sigma_1=I_r$. 
Together with (\ref{FF11}), this further yields $K=K^*$. 
By (\ref{ZXC}), $L=0$ implies $M=0$, and thus by Theorem~\ref{t1}, we have $\hat{A}\in\mathbb{DQ}_n^{3\text{-}OP}$. 
\end{proof}

\begin{theorem}
  Let $ \hat{A} \in \mathbb{DQ}^{n \times n}$. Then the following statements are equivalent: \\
\noindent $(a)$  $\hat{A}$ is a dual orthogonal tripotent matrix;\ \ \
\noindent $(b)$ $\frac{1}{3}(\hat{A}+\hat{A}\sp{\scriptscriptstyle N}+\hat{A}\hat{A}\sp{\scriptscriptstyle N}\hat{A}^*)= \hat{A}_e$;\\
\noindent $(c)$ $\frac{1}{3}(\hat{A}+\hat{A}\sp{\scriptscriptstyle N}+\hat{A}\hat{A}^*\hat{A}\sp{\scriptscriptstyle N})= \hat{A}_e$;\quad \quad \quad \quad \quad \quad  \ 
\noindent $(d)$ $\frac{1}{3}(\hat{A}+\hat{A}\sp{\scriptscriptstyle N}+\hat{A}\sp{\scriptscriptstyle N}\hat{A}\hat{A}^*)= \hat{A}_e$;\\
\noindent $(e)$ $\frac{1}{3}(\hat{A}_e+\hat{A}\sp{\scriptscriptstyle N}+\hat{A}\hat{A}^*\hat{A}\sp{\scriptscriptstyle N})= \hat{A}$;\quad \quad \quad \quad \quad \quad  \ 
\noindent $(f)$ $\frac{1}{3}(\hat{A}_e+\hat{A}\sp{\scriptscriptstyle N}+\hat{A}\sp{\scriptscriptstyle N}\hat{A}\hat{A}^*)= \hat{A}$;\\
\noindent $(g)$ $\frac{1}{3}(\hat{A}+\hat{A}_e+\hat{A}\hat{A}^*\hat{A}\sp{\scriptscriptstyle N})= \hat{A}\sp{\scriptscriptstyle N}$;\quad \quad \quad \quad \quad \ \ \ \ 
\noindent $(h)$ $\frac{1}{3}(\hat{A}+\hat{A}_e+\hat{A}\hat{A}^*\hat{A})= \hat{A}\sp{\scriptscriptstyle N}$;\\
\noindent $(i)$ $\frac{1}{3}(\hat{A}+\hat{A}_e+\hat{A}\sp{\scriptscriptstyle N}\hat{A}\hat{A}^*)= \hat{A}\sp{\scriptscriptstyle N}$;\quad \quad \quad \quad \quad \quad  \ \
\noindent $(j)$ $\frac{1}{3}(\hat{A}+\hat{A}_e+\hat{A}^2\hat{A}^*)= \hat{A}\sp{\scriptscriptstyle N}$;\\
\noindent $(k)$ $\frac{1}{3}(\hat{A}^*+\hat{A}\sp{\scriptscriptstyle N}+\hat{A}\hat{A}\sp{\scriptscriptstyle N}\hat{A}^*)= \hat{A}_e$;\quad \quad \quad \quad \quad \ \ \
\noindent $(l)$ $\frac{1}{3}(\hat{A}^*+\hat{A}\sp{\scriptscriptstyle N}+\hat{A}\sp{\scriptscriptstyle N}\hat{A}\hat{A}^*)= \hat{A}_e$.
\end{theorem}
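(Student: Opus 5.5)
The plan follows the pattern of Theorem~\ref{2pj}. First, $(a)$ implies every one of $(b)$--$(l)$. By Theorem~\ref{t3}, $\hat{A}=\hat{A}^*=\hat{A}\sp{\scriptscriptstyle N}=\hat{A}_e$ and $\hat{A}^3=\hat{A}$. Hence each triple product appearing in the statement, such as $\hat{A}\hat{A}\sp{\scriptscriptstyle N}\hat{A}^*$, $\hat{A}\hat{A}^*\hat{A}$ or $\hat{A}^2\hat{A}^*$, equals $\hat{A}^3=\hat{A}$, and every average collapses to $\hat{A}$. For the converses I will substitute (\ref{F1}), (\ref{Ee}) and (\ref{FQ}) and compute the products blockwise. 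I will repeatedly use $KK^*+LL^*=I_r$ and $\Sigma_2^2=0$. For instance,
$$\hat{A}\hat{A}\sp{\scriptscriptstyle N}=\hat{U}\begin{bmatrix} I_r & 0\\ 0&0\end{bmatrix}\hat{U}^*,\qquad \hat{A}\sp{\scriptscriptstyle N}\hat{A}=\hat{U}\begin{bmatrix}K^*K & K^*L\\ L^*K & L^*L\end{bmatrix}\hat{U}^*.$$
From these, $\hat{A}\hat{A}\sp{\scriptscriptstyle N}\hat{A}^*=\hat{U}\begin{bmatrix}K^*\Sigma_1 & L^*\Sigma_1\\0&0\end{bmatrix}\hat{U}^*$, $\hat{A}\hat{A}^*\hat{A}\sp{\scriptscriptstyle N}=\hat{U}\begin{bmatrix}\Sigma_1 K & 0\\ 0&0\end{bmatrix}\hat{U}^*$ (using $\Sigma_2^2=0$ and the orthogonality relations), and $\hat{A}\sp{\scriptscriptstyle N}\hat{A}\hat{A}^*$ has first block column $K^*\Sigma_1, L^*\Sigma_1$ and zero second column.

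Each condition then becomes a short list of blockwise dual equations, and the first task is to kill the off-diagonal parts. In cases $(b)$--$(d)$ and $(k)$, $(l)$ the target $\hat{A}_e$ has zero second block row. Comparing the second row forces $\Sigma_2 M=0$ and $\Sigma_2 N=0$, so $\Sigma_2=0$ by (\ref{SS}). Comparing the $(1,2)$ block (or the $(2,1)$ block coming from $L^*\Sigma_1^{-1}$) forces $L=0$. Since $\Sigma_1$ is invertible, this requires separating the $\Sigma_1 L$ term from the $L^*$-terms. If the two appear in different blocks this is immediate. If both appear in the same block (e.g.\ $(k)$: the $(1,2)$ block gives $L^*\Sigma_1$ against $\Sigma_1L$ together with the $(2,1)$ block $L^*\Sigma_1^{-1}$), I will use one block to get $L^*=0$ directly. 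Then $M=0$ follows from (\ref{ZXC}).

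For $(e)$--$(j)$ the same second-row comparison (the target $\hat{A}$ or $\hat{A}\sp{\scriptscriptstyle N}$ has second row $\Sigma_2M,\Sigma_2N$ or $L^*\Sigma_1^{-1},0$) gives $\Sigma_2=0$ and $L=0$. Case $(h)$ is handled as in Theorem~\ref{2pj} using $\Sigma_2^3=0$.

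After this reduction everything lives in the $(1,1)$ block, where each condition reads $\alpha\Sigma_1K+\beta K^*\Sigma_1+\gamma K^*\Sigma_1^{-1}=\dots$ for small integer coefficients. Typical instances are $(b)$: $\Sigma_1K+K^*\Sigma_1^{-1}+K^*\Sigma_1=3\Sigma_1K$, and $(j)$: $2\Sigma_1K+\Sigma_1^3K^*K\cdots=3K^*\Sigma_1^{-1}$. I will isolate the result as $\Sigma_1K=K^*\,f(\Sigma_1)$ for a diagonal $f(\Sigma_1)$, e.g.\ $2\Sigma_1K=K^*(\Sigma_1+\Sigma_1^{-1})$. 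Multiplying by the conjugate transpose and using $KK^*=K^*K=I_r$ gives $f(\Sigma_1)^*K\cdots$, i.e.\ a commutation $g(\Sigma_1)K=Kg(\Sigma_1)$ with $g$ a sum of two powers of the same sign. Theorem~\ref{ll}(b) then yields $\Sigma_1K=K\Sigma_1$. This gives $K^2=h(\Sigma_1)$ diagonal, and $K^2(K^*)^2=I_r$ yields a polynomial identity $h(\Sigma_1)^2=I_r$. I will factor this as $(\Sigma_1-I_r)\cdot(\text{invertible})=0$, as in Theorems~\ref{t5} and \ref{2pj}, or invoke Theorem~\ref{QQ9}, to conclude $\Sigma_1=I_r$. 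Then $K=K^*$ is read off from the $(1,1)$ equation, and Theorem~\ref{t1}(c) finishes the proof.

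The main obstacle is this last step. For each case one has to check that the extra factor in $h(\Sigma_1)^2-I_r$ has no positive appreciable dual root, so that no spurious solution like $\frac{1}{\sqrt5}$ in Theorem~\ref{4pj} appears. This must be verified individually for $(b)$--$(l)$; some coefficient patterns, particularly those mixing $K$ and $K^*$ with opposite signs as in $(k)$, $(l)$, may need the sign of the standard part argued directly. Because $\Sigma_1$ is dual rather than real, I will work with dual-number factorizations throughout instead of splitting into standard and infinitesimal parts, which avoids the $K_2$, $\Sigma_{\mathrm{1d}}$ bookkeeping used in Theorem~\ref{t5}.
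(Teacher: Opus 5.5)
Your overall route is the paper's. The paper writes out only $(b)\Rightarrow(a)$, by exactly your chain: $L=0$, $\Sigma_2=0$, a commutation relation, Theorem~\ref{ll}, $2K^2=I_r+\Sigma_1^{-2}$, factorization, Theorem~\ref{QQ9}, Theorem~\ref{t1}. However, two of your block products are wrong, and as written they break several cases. Since $\hat{A}\hat{A}\sp{\scriptscriptstyle N}=\hat{U}\,\mathrm{diag}(I_r,0)\,\hat{U}^*$ and $\hat{A}\hat{A}^*=\hat{U}\,\mathrm{diag}(\Sigma_1^2,0)\,\hat{U}^*$, you have
\[
\hat{A}\hat{A}\sp{\scriptscriptstyle N}\hat{A}^*=\hat{U}\begin{bmatrix}K^*\Sigma_1 & M^*\Sigma_2\\ 0&0\end{bmatrix}\hat{U}^*,\qquad
\hat{A}\hat{A}^*\hat{A}\sp{\scriptscriptstyle N}=\hat{U}\begin{bmatrix}\Sigma_1^2K^*\Sigma_1^{-1}&0\\ 0&0\end{bmatrix}\hat{U}^*.
\]
Your $(1,2)$ block $L^*\Sigma_1$ does not even have size $r\times(n-r)$, and your $(1,1)$ block $\Sigma_1K$ is false in general.

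\textbf{Consequences for (b).} The off-diagonal equations are $M^*\Sigma_2=2\Sigma_1L$ and $\Sigma_2M+L^*\Sigma_1^{-1}=0$. Neither alone yields $L=0$ (each only gives $L_1=0$), and the second block row does not give $\Sigma_2M=0$ as you claim. The missing step, which the paper uses, is to substitute the conjugate transpose $\Sigma_2M=2L^*\Sigma_1$ into the second equation. This gives $L^*(2\Sigma_1+\Sigma_1^{-1})=0$, and only then do $\Sigma_2M=\Sigma_2N=0$ and $\Sigma_2=0$ follow. In (k), (l) the $(2,1)$ block alone gives $L^*(\Sigma_1+\Sigma_1^{-1})=0$, resp.\ $L^*(2\Sigma_1+\Sigma_1^{-1})=0$, and $\Sigma_2M=0$ comes afterwards from the $(1,2)$ block.

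\textbf{Consequences for (c), (e), (g).} This is more serious. With your formula, all three collapse to $\Sigma_1K=K^*\Sigma_1^{-1}$. That equation holds for $\Sigma_1=\mathrm{diag}(a,a^{-1})$, $K=\begin{bmatrix}0&1\\1&0\end{bmatrix}$ with $a>1$, so $\Sigma_1=I_r$ could not be derived in those cases.

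\textbf{The correct equations and a fix.} With the correct products:
(c), (e) give $2\Sigma_1K\Sigma_1=(I_r+\Sigma_1^2)K^*$;
(g) gives $2\Sigma_1K\Sigma_1=(3I_r-\Sigma_1^2)K^*$;
(h) gives $(2\Sigma_1+\Sigma_1^3)K=3K^*\Sigma_1^{-1}$;
(j) gives $\Sigma_1K(2I_r+\Sigma_1^2)=3K^*\Sigma_1^{-1}$ (your display for (j) is garbled).
None of these fits your template $\Sigma_1K=K^*f(\Sigma_1)$. A uniform fix is to multiply by $K$ on the appropriate side. This shows that $(\Sigma_1K)^2$ or $(K\Sigma_1)^2$ is a diagonal function of $\Sigma_1$, e.g.\ $(\Sigma_1K)^2=\tfrac12(I_r+\Sigma_1^2)$ in (c). A matrix commutes with its square, so $\Sigma_1^2K=K\Sigma_1^2$, and Theorem~\ref{ll}(a) gives $\Sigma_1K=K\Sigma_1$. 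No coefficient such as $3I_r-\Sigma_1^2$ ever has to be inverted.

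\textbf{Spurious roots.} Your worry here does not materialize. One gets:
$K^2=\tfrac12(I_r+\Sigma_1^{-2})$ in (b)--(f);
$K^2=\tfrac12(3\Sigma_1^{-2}-I_r)$ in (g), (i);
$K^2=3\Sigma_1^{-2}(2I_r+\Sigma_1^2)^{-1}$ in (h), (j);
$K^2=\tfrac13(2I_r+\Sigma_1^{-2})$ in (k), (l).
In every case $K^2(K^*)^2=I_r$ factors as $\Sigma_1^{-2}-I_r$ (or $\Sigma_1^{2}-I_r$) times an invertible diagonal matrix, e.g.\ $(\Sigma_1^{-2}+5I_r)(\Sigma_1^{-2}-I_r)=0$ for (k), (l). So Theorem~\ref{QQ9} applies directly and no sign argument is needed.
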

\begin{proof}
  Theorem \ref{t3} together with (\ref{F1}) and (\ref{FQ}) implies that condition $(a)$ yields each of $(b)$--$(l)$. We only prove $(b) \Rightarrow (a)$ here,  as the remaining cases are analogous. \\
  \noindent $(b)\Rightarrow(a):$ 
  Using (\ref{F1}), (\ref{Ee}), (\ref{FQ}) and  $\frac{1}{3}(\hat{A}+\hat{A}\sp{\scriptscriptstyle N}+\hat{A}\hat{A}\sp{\scriptscriptstyle N}\hat{A}^*)= \hat{A}_e$, we obtain
  $$ K^*(\Sigma_1+\Sigma_1^{-1})=2\Sigma_1 K, \ \ M^*\Sigma_2=2\Sigma_1 L, \ \ \Sigma_2 M+L^*\Sigma_1^{-1}=0 \ \ \text{and}\ \  \Sigma_2 N=0.$$
This gives $\Sigma_2 M+L^*\Sigma_1^{-1}=L^*(2\Sigma_1+\Sigma_1^{-1})=0$, so $L=0$.
It follows that $\Sigma_2 M=0$. Together with $\Sigma_2 N=0$, we readily obtain  $\Sigma_2=0$  from (\ref{SS}).
Furthermore, since $L=0$, we conclude that $2K^*(\Sigma_1+\Sigma_1^{-1})\Sigma_1 K=2\Sigma_1 KK^*(\Sigma_1+\Sigma_1^{-1})$, i.e.,
$(\Sigma_1^2+I_r)K=K(\Sigma_1^2+I_r)$, which by Theorem~\ref{ll} yields   $\Sigma_1 K=K\Sigma_1$.
Thus, from the first identity,
$$ 2K^2=I_r+\Sigma_1^{-2} \ \ \Rightarrow \ \ 4I_r=4K^2(K^*)^2= (I_r+\Sigma_1^{-2})^2, $$
which implies $(\Sigma_1^{-2}+3I_r)(\Sigma_1^{-2}-I_r)=0$.
Since $(\Sigma_1^{-2}+3I_r)$ is invertible, it follows that $\Sigma_1^{-2}=I_r$,
From Theorem~\ref{QQ9}, we obtain  $\Sigma_1=I_r$, which implies  $K=K^*$.
By (\ref{ZXC}), $L=0$ gives $M=0$.
Thus, Theorem~\ref{t1} yields $\hat{A}\in\mathbb{DQ}_n^{3\text{-}OP}$.
\end{proof}

Furthermore, we are also interested in characterizing dual orthogonal tripotent matrices by linear equations in $\hat{A}$, $\hat{A}_e$, $\hat{A}^*$, $\hat{A}\sp{\scriptscriptstyle N}$ and their products.

\begin{theorem}
  Let $ \hat{A} \in \mathbb{DQ}^{n \times n}$. Then the following statements are equivalent: \\
\noindent $(a)$  $\hat{A}$ is a dual orthogonal tripotent matrix;\ \
\noindent $(b)$  $\hat{A}+\hat{A}^*\hat{A}\hat{A}\sp{\scriptscriptstyle N}=\hat{A}\sp{\scriptscriptstyle N}+\hat{A}\hat{A}^*\hat{A}\sp{\scriptscriptstyle N}$;\\
\noindent $(c)$  $\hat{A}+\hat{A}^2\hat{A}^*=\hat{A}\sp{\scriptscriptstyle N}+\hat{A}^2\hat{A}\sp{\scriptscriptstyle N}$;\ \ \ \quad \quad \quad \quad \quad \quad 
\noindent $(d)$  $\hat{A}^*+\hat{A}\hat{A}^*\hat{A}=\hat{A}\sp{\scriptscriptstyle N}+\hat{A}\sp{\scriptscriptstyle N}\hat{A}\hat{A}^*$;\\
\noindent $(e)$  $\hat{A}^*+\hat{A}^*\hat{A}\hat{A}^*=\hat{A}\sp{\scriptscriptstyle N}+\hat{A}\sp{\scriptscriptstyle N}\hat{A}\hat{A}^*$;\ \  \ \quad \quad \quad \quad
\noindent $(f)$  $\hat{A}^*+\hat{A}^2\hat{A}^*=\hat{A}\sp{\scriptscriptstyle N}+\hat{A}\sp{\scriptscriptstyle N}\hat{A}\hat{A}^*$;\\
\noindent $(g)$  $\hat{A}^*+\hat{A}^2\hat{A}^*=\hat{A}\sp{\scriptscriptstyle N}+\hat{A}^*\hat{A}\hat{A}\sp{\scriptscriptstyle N}$.
\end{theorem}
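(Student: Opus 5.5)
By Theorem~\ref{t3} and (\ref{E1}), condition $(a)$ gives $\hat{A}=\hat{A}^*=\hat{A}\sp{\scriptscriptstyle N}$ and $\hat{A}^3=\hat{A}$. Substituting these into each of $(b)$--$(g)$ turns both sides into $\hat{A}+\hat{A}^3$, so $(a)$ implies every other condition. For the converses I will prove $(b)\Rightarrow(a)$ in detail and argue that the remaining cases follow the same template, as in the earlier theorems of this section.

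Write $\hat{A}$ as in (\ref{F1}) and $\hat{A}\sp{\scriptscriptstyle N}$ as in (\ref{FQ}). The first step is to compute the building blocks using (\ref{SS}):
$$\hat{A}\hat{A}\sp{\scriptscriptstyle N}=\hat{U}\begin{bmatrix} I_r & 0\\ \Sigma_2(MK^*+NL^*)\Sigma_1^{-1} & 0\end{bmatrix}\hat{U}^*=\hat{U}\begin{bmatrix} I_r & 0\\ 0 & 0\end{bmatrix}\hat{U}^*$$
and
$$\hat{A}\hat{A}^*=\hat{U}\,\mathrm{diag}(\Sigma_1^2,\Sigma_2^2)\,\hat{U}^*, \quad \text{where } \Sigma_2^2=0.$$
Hence
$$\hat{A}^*\hat{A}\hat{A}\sp{\scriptscriptstyle N}=\hat{U}\begin{bmatrix}K^*\Sigma_1&0\\ L^*\Sigma_1&0\end{bmatrix}\hat{U}^*, \qquad \hat{A}\hat{A}^*\hat{A}\sp{\scriptscriptstyle N}=\hat{U}\begin{bmatrix}\Sigma_1^2K^*\Sigma_1^{-1}&0\\0&0\end{bmatrix}\hat{U}^*.$$
Comparing blocks in $(b)$ gives $\Sigma_1L=0$, $\Sigma_2N=0$, $\Sigma_2M+L^*\Sigma_1=L^*\Sigma_1^{-1}$, and
$$\Sigma_1K+K^*\Sigma_1=K^*\Sigma_1^{-1}+\Sigma_1^2K^*\Sigma_1^{-1}.$$
Since $\Sigma_1$ is invertible, $L=0$. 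Then (\ref{ZXC}) gives $M=0$ and $NN^*=I_{n-r}$, so $\Sigma_2N=0$ forces $\Sigma_2=0$.

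It remains to extract $\Sigma_1=I_r$ and $K=K^*$ from the $(1,1)$ identity, where $K$ is now unitary. I first multiply on the right by $\Sigma_1$ to get
$$\Sigma_1K\Sigma_1+K^*\Sigma_1^2=K^*+\Sigma_1^2K^*.$$
Subtracting its conjugate transpose $\Sigma_1K^*\Sigma_1+\Sigma_1^2K=K+K\Sigma_1^2$ is the natural way to isolate a commutator, aiming at $(I_r+\Sigma_1^2)K=K(I_r+\Sigma_1^2)$ or $\Sigma_1^2K=K\Sigma_1^2$; Theorem~\ref{ll} would then give $\Sigma_1K=K\Sigma_1$. Once commutation holds, the identity becomes $\Sigma_1^2K^*(I_r-\Sigma_1^{-2})=\Sigma_1^2K-\ldots$; more usefully, it yields $K^2(\Sigma_1^2-I_r)=\ldots$ and a relation of the form $\Sigma_1^2K+K^*\Sigma_1^2=K^*(I_r+\Sigma_1^2)$, i.e.\ $\Sigma_1^2K=K^*$. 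Taking the product of $\Sigma_1^2K$ with its adjoint then gives $\Sigma_1^4=I_r$, so Theorem~\ref{QQ9} yields $\Sigma_1=I_r$ and hence $K=K^*$. Theorem~\ref{t1}$(c)$ finishes the proof.

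The main obstacle is exactly this commutation step, since the identity mixes $K$ and $K^*$ on both sides. If subtracting the adjoint does not directly produce a commutator, I would instead argue entrywise. Using the diagonal dual entries $\sigma_i$ and the relation
$$\sigma_ik_{ij}\sigma_j+\bar k_{ji}\sigma_j^2=\bar k_{ji}(1+\sigma_i^2),$$
together with the adjoint relation, I would treat separately the cases where $k_{ij}$ is appreciable and where it is purely infinitesimal, exactly as in the proof of Theorem~\ref{ll}. Positivity of the $\sigma_{i,s}$ should force $\sigma_i=\sigma_j$ whenever $k_{ij}\neq0$.

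For $(c)$--$(g)$ I would repeat the same block computation. In each case $\Sigma_2=0$ and $L=0$ come out of the off-diagonal blocks, and the $(1,1)$ block gives a relation between $\Sigma_1K$ and $K^*$ handled in the same way.
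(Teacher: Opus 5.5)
Your block computations and the deductions $L=0$, $M=0$, $\Sigma_2=0$ match the paper. So does your endgame: $\Sigma_1^2K=K^*$ forces $\Sigma_1^4=I_r$, hence $\Sigma_1=I_r$ and $K=K^*$. The gap is the step you flag yourself: $\Sigma_1K=K\Sigma_1$ is never established, and neither of your suggested routes reaches it.

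Right-multiplying by $\Sigma_1$ before taking adjoints throws away the structure that makes the adjoint trick work. In the unmultiplied identity $\Sigma_1K+K^*\Sigma_1=(I_r+\Sigma_1^2)K^*\Sigma_1^{-1}$ the left side is Hermitian, so the right side must be too. This gives $DK^*=KD$ with $D=\Sigma_1+\Sigma_1^3$. That is still not a commutator. The missing idea is to use the unitarity of $K$, which you note but never invoke: $D^2=(DK^*)(KD)=(KD)(DK^*)=KD^2K^*$, so $D^2K=KD^2$. Since $D$ is diagonal with positive appreciable entries, Theorem~\ref{ll}(a) with $p=2$ applied to $D$ gives $DK=KD$. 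Theorem~\ref{ll}(b) with $(p,q)=(1,3)$ then gives $\Sigma_1K=K\Sigma_1$. This is the paper's route; the paper applies Theorem~\ref{ll} to $(\Sigma_1+\Sigma_1^3)^2$ in one go.

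Your entrywise fallback cannot replace this step. Unitarity is not an entrywise condition, and the two scalar relations you list are too weak. Eliminating $\bar k_{ji}$ between the $(i,j)$ relation and the corresponding entry of the adjoint identity yields only $k_{ij}(\sigma_i^4-\sigma_i^2\sigma_j^2+\sigma_j^4-1)=0$. The bracket vanishes for $\sigma_i^2=\tfrac12$, $\sigma_j^2=\tfrac{1+\sqrt{13}}{4}$. Concretely, take $\Sigma_1=\mathrm{diag}(\sigma_1,\sigma_2)$ with these values and the non-unitary real matrix $K=\begin{bmatrix}0&1\\ c&0\end{bmatrix}$ with $c=\sigma_1\sigma_2/(1+\sigma_1^2-\sigma_2^2)$. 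This $K$ satisfies the whole $(1,1)$ identity, yet $\Sigma_1K\neq K\Sigma_1$. So no case analysis in the style of Theorem~\ref{ll} can force $\sigma_i=\sigma_j$ from those relations alone.

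Separately, your claim that $(c)$--$(g)$ follow the same template fails for $(e)$. There the $(1,1)$ and $(2,1)$ blocks read $K^*\Sigma_1^3=K^*\Sigma_1^{-1}$ and $L^*\Sigma_1^3=L^*\Sigma_1^{-1}$, so $L=0$ does not come out. In fact $(e)$ holds for every $\hat{A}$ with $\hat{A}\sp{\scriptscriptstyle N}=\hat{A}^*$, since both sides then equal $\hat{A}^*+\hat{A}^*\hat{A}\hat{A}^*$. An example is the non-Hermitian $1\times1$ matrix $\hat{A}=\mathbf{i}$. So $(e)\Rightarrow(a)$ is false as stated, and the paper's ``analogous'' remark shares this defect.
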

\begin{proof}
 From Theorem \ref{t3}, $(a)$ implies all conditions $(b)$-$(g)$. We only establish $(b) \Rightarrow (a)$, the other cases are analogous.\\
 \noindent $(b)\Rightarrow(a):$  Combining (\ref{F1}) and (\ref{FQ}) with $\hat{A}+\hat{A}^*\hat{A}\hat{A}\sp{\scriptscriptstyle N}=\hat{A}\sp{\scriptscriptstyle N}+\hat{A}\hat{A}^*\hat{A}\sp{\scriptscriptstyle N}$ yields 
 \begin{align}\label{FF12} 
  &\Sigma_1 K +K^*\Sigma_1=K^*\Sigma_1^{-1}+\Sigma_1^2 K^*\Sigma_1^{-1}, \quad  \Sigma_1 L=0, \quad \Sigma_2 N=0. 
 \end{align}
By (\ref{FF12}), we have $L=0$. Then, from (\ref{ZXC}), it follows that $M=0$ and $NN^*=I_{n-r}$. Furthermore, with $\Sigma_2 N=0$, it is straightforward to see that $\Sigma_2=0$.
Moreover, subtracting the first identity in (\ref{FF12}) from its conjugate transpose gives
$$ (I_r+\Sigma_1^2)K^*\Sigma_1^{-1}=\Sigma_1^{-1}K(I_r+\Sigma_1^2) \ \ \Leftrightarrow  \ \ (\Sigma_1+\Sigma_1^3)K^*=K(\Sigma_1+\Sigma_1^3).$$
Then  $ (\Sigma_1+\Sigma_1^3)K^*K(\Sigma_1+\Sigma_1^3)=K(\Sigma_1+\Sigma_1^3) (\Sigma_1+\Sigma_1^3)K^*$, i.e., $(\Sigma_1+\Sigma_1^3)^2K=K(\Sigma_1+\Sigma_1^3)^2$. 
By Theorem~\ref{ll}, we have $\Sigma_1 K=K\Sigma_1$.
Together with (\ref{FF12}), this leads to $\Sigma_1 (K+K^*)=(\Sigma_1^{-1}+\Sigma_1)K^*$. 
Multiplying this identity by $K$ on the right and by $\Sigma_1^{-1}$ on the left yields $K^2=\Sigma_1^{-2}$,
which implies $\Sigma_1^{-4}=K^2(K^*)^2=I_r$. 
By Theorem~\ref{QQ9}, we obtain $\Sigma_1=I_r$, whence $K=K^*$.
Hence, $\hat{A}\in\mathbb{DQ}_n^{3\text{-}OP}$ holds.
\end{proof}

As \(\hat{A}^*\hat{A}\) and \(\hat{A}\hat{A}^*\)  satisfy 
$(\hat{A}^*\hat{A})^*=(\hat{A})^*(\hat{A}^*)^*=\hat{A}^*\hat{A} \ \ \text{and}\ \ (\hat{A}\hat{A}^*)^*=(\hat{A}^*)^*(\hat{A})^*=\hat{A}\hat{A}^*$,
 they are dual quaternion Hermitian matrices. 
Similary to the case of negative integer powers for complex matrices (Khatri \citeyear{Khat}),  for any $\alpha \in \mathbb{Z}$ we have  
$(\hat{A}\sp{\scriptscriptstyle N})^\alpha = (\hat{A}^\alpha)^{\text{\tiny N}}$,
which allows us to define $$\hat{A}^{-\alpha} = (\hat{A}\sp{\scriptscriptstyle N})^\alpha = (\hat{A}^\alpha)^{\text{\tiny N}}.$$
The above definition also applies to DMPGI $\hat{A}^{\dagger}$.
In what follows, we characterize dual quaternion matrices for integers \(p, q\) using integer powers of \(\hat{A}^*\hat{A}\) and \(\hat{A}\hat{A}^*\).

\begin{theorem}\label{tt5}
    Let $\hat{A}\in\mathbb{DQ}^{n\times n}$ and $p,q \in \mathbb{Z}$. If $\hat{A}^{\dagger}$ exists, the following conditions are equivalent: \\
  \noindent $(a)$  $\hat{A}$ is a dual orthogonal tripotent matrix;\\
  \noindent$(b)$ $\hat{A}(\hat{A}\hat{A}^*)^{p}=\hat{A}^{\dagger}(\hat{A}\hat{A}^*)^{q}$ with $p\neq q$ and $p - q + 1\neq0$;\\
   \noindent $(c)$ $\hat{A}(\hat{A}^*\hat{A})^{p}=\hat{A}^{\dagger}(\hat{A}^*\hat{A})^{q}$ with $p\neq q$ and $p - q + 1\neq0$;\\
  \noindent    $(d)$  $\hat{A}^*(\hat{A}\hat{A}^*)^{p}=\hat{A}(\hat{A}^*\hat{A})^{q}$ with $p\neq q$ and $p + q + 1\neq0$;\\
     \noindent  $(e)$ $\hat{A}(\hat{A}^*\hat{A})^{p}=\hat{A}^*(\hat{A}^*\hat{A})^{q}$ with $p\neq q$ and $p - q + 1\neq0$;\\
     \noindent $(f)$ $\hat{A}(\hat{A}\hat{A}^*)^{p}=\hat{A}^*(\hat{A}\hat{A}^*)^{q}$ with $p\neq q$ and $p - q - 1\neq0$;\\
    \noindent  $(g)$ $\hat{A}(\hat{A}^*\hat{A})^{p}=\hat{A}^{\dagger}(\hat{A}\hat{A}^*)^{q}$ with $p\neq -q$ and $p - q + 1\neq0$. 
 \end{theorem}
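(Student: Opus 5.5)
Since $\hat{A}^{\dagger}$ exists, Lemma~\ref{LN} gives $\Sigma_2=0$, so by (\ref{F1}) we may write
$\hat{A}=\hat{U}\begin{bmatrix}\Sigma_1K&\Sigma_1L\\0&0\end{bmatrix}\hat{U}^*$ and $\hat{A}^{\dagger}=\hat{A}\sp{\scriptscriptstyle N}=\hat{U}\begin{bmatrix}K^*\Sigma_1^{-1}&0\\L^*\Sigma_1^{-1}&0\end{bmatrix}\hat{U}^*$.
The plan is to use these block forms in each of $(b)$--$(g)$ and compare blocks, exactly as in the proofs of Theorems~\ref{t5} and \ref{2pj}.

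\textbf{Necessity.} By Theorem~\ref{t1}$(c)$, $(a)$ gives $\Sigma_1=I_r$, $L=0$, $K=K^*$ and $K^2=I_r$. Then $\hat{A}\hat{A}^*=\hat{A}^*\hat{A}=\hat{U}(I_r\oplus 0)\hat{U}^*$ is idempotent, so every integer power of it equals itself. Here the negative powers are read as NDMPI powers, which is harmless for an orthogonal projector. Together with $\hat{A}=\hat{A}^*=\hat{A}^{\dagger}$ from Corollary~\ref{rr5}, each identity in $(b)$--$(g)$ reduces to $\hat{A}=\hat{A}$.

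\textbf{Sufficiency, shown for $(b)\Rightarrow(a)$.} First compute the powers:
\[
\hat{A}\hat{A}^*=\hat{U}(\Sigma_1^2\oplus 0)\hat{U}^*, \qquad (\hat{A}\hat{A}^*)^p=\hat{U}(\Sigma_1^{2p}\oplus0)\hat{U}^*,
\]
valid for every $p\in\mathbb{Z}$ once the NDMPI is used for negative exponents. For $p=0$ one has to agree on the convention $(\hat{A}\hat{A}^*)^0=\hat{A}\hat{A}^{\dagger}$, or else treat the case $I_n$ separately.

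Substituting into $(b)$ gives
\[
\hat{U}\begin{bmatrix}\Sigma_1^{2p}\Sigma_1K&\Sigma_1L\\0&0\end{bmatrix}\hat{U}^*
\quad\text{versus}\quad
\hat{U}\begin{bmatrix}K^*\Sigma_1^{-1}\Sigma_1^{2q}&0\\L^*\Sigma_1^{-1}\Sigma_1^{2q}&0\end{bmatrix}\hat{U}^*.
\]
Comparing the $(1,2)$ blocks gives $\Sigma_1L=0$, hence $L=0$, so $KK^*=K^*K=I_r$ by (\ref{ZXC}). The $(1,1)$ block gives
\[
\Sigma_1^{2p+1}K=K^*\Sigma_1^{2q-1}.
\]
Next eliminate $K^*$. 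Multiplying the identity by its conjugate transpose in both orders gives $\Sigma_1^{2p+1}KK^*\Sigma_1^{2p+1}=\Sigma_1^{4p+2}$ and $K^*\Sigma_1^{4q-2}K$ equal to the corresponding expressions. The cleaner route is to take the conjugate transpose, $K^*\Sigma_1^{2p+1}=\Sigma_1^{2q-1}K$, and substitute back. This yields a commutation relation $\Sigma_1^{m}K=K\Sigma_1^{m}$ with $m$ a nonzero multiple of $2(p-q+1)$. This is exactly where the hypothesis $p-q+1\neq0$ enters.

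Theorem~\ref{ll}$(a)$ then gives $\Sigma_1K=K\Sigma_1$. The $(1,1)$ identity becomes $\Sigma_1^{2(p-q+1)}K=K^*$, so $K^*$ commutes with $\Sigma_1$ as well. Hence
\[
I_r=K^*K=\Sigma_1^{2(p-q+1)}K^2 \quad\text{and}\quad I_r=KK^*=\Sigma_1^{2(p-q+1)}K^2 .
\]
Taking the product with the conjugate transposes gives $\Sigma_1^{4(p-q+1)}=I_r$. Theorem~\ref{QQ9} then yields $\Sigma_1=I_r$, whence $K=K^*$, and $M=0$ by (\ref{ZXC}). Theorem~\ref{t1}$(c)$ finishes the implication.

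\textbf{The other items.} These follow the same template.
\begin{itemize}
\item $(c)$: use $\hat{A}^*\hat{A}=\hat{U}\begin{bmatrix}K^*\Sigma_1^2K&K^*\Sigma_1^2L\\L^*\Sigma_1^2K&L^*\Sigma_1^2L\end{bmatrix}\hat{U}^*$. The difficulty is that its powers are not block-diagonal before $L=0$ is known. Write $\hat{A}^*\hat{A}=\hat{U}W^*(\Sigma_1^2\oplus0)W\hat{U}^*$ with $W=\begin{bmatrix}K&L\\M&N\end{bmatrix}$ unitary, so its $p$th power is $\hat{U}W^*(\Sigma_1^{2p}\oplus0)W\hat{U}^*$. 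Multiplying $(c)$ on the right by $\hat{U}W^*$ reduces it to the same block comparison as in $(b)$.
\item $(d)$--$(f)$: here $\hat{A}^*$ replaces $\hat{A}^{\dagger}$. The exponent that appears becomes $2(p+q+1)$ or $2(p-q-1)$, and the stated nonvanishing conditions are precisely what Theorem~\ref{ll} needs.
\item $(g)$: this mixes both products. The exponent combination is $p+q$ in the $\Sigma_1$ powers together with $p-q+1$, matching the hypotheses $p\neq-q$ and $p-q+1\neq0$.
\end{itemize}

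\textbf{Expected obstacle.} The main difficulty is extracting $L=0$ in the cases involving $\hat{A}^*\hat{A}$. It is also unclear what role the hypothesis $p\neq q$ plays, since it is not needed for the bookkeeping above; it may only be needed to exclude degenerate exponents, or in a case I have not yet identified.
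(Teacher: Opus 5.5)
\textbf{There is a genuine gap in your $(b)\Rightarrow(a)$ argument.} It is also why the hypothesis $p\neq q$ looks superfluous to you.

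\textbf{The block computation is wrong.} The factor $(\hat{A}\hat{A}^*)^{p}=\hat{U}(\Sigma_1^{2p}\oplus 0)\hat{U}^*$ multiplies $\hat{A}$ on the \emph{right}. So the $(1,1)$ block of $\hat{A}(\hat{A}\hat{A}^*)^{p}$ is $\Sigma_1K\Sigma_1^{2p}$, not $\Sigma_1^{2p+1}K$; these differ until you know $K\Sigma_1=\Sigma_1K$. For $p\neq 0$ its $(1,2)$ block is $0$, not $\Sigma_1L$. You still get $L=0$, from the $(2,1)$ block $L^*\Sigma_1^{2q-1}=0$. But the correct $(1,1)$ identity is
\[
\Sigma_1K=K^*\Sigma_1^{2(q-p)-1}.
\]

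\textbf{The commutation step fails, even for the identity you wrote.} Taking the conjugate transpose and substituting back does not give $\Sigma_1^{m}K=K\Sigma_1^{m}$. It gives only the twisted relation $\Sigma_1^{m}K\Sigma_1^{m}=K$ with $m=2(p-q+1)$, and Theorem~\ref{ll} does not apply to that.

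\textbf{The missing idea.} The paper squares the common value $X=\Sigma_1K=K^*\Sigma_1^{2(q-p)-1}$ in two ways:
\[
X^2=\Sigma_1KK^*\Sigma_1^{2(q-p)-1}=\Sigma_1^{2(q-p)},\qquad X^2=K^*\Sigma_1^{2(q-p)-1}\Sigma_1K=K^*\Sigma_1^{2(q-p)}K .
\]
So $K$ commutes with $\Sigma_1^{2(q-p)}$, and \emph{this} is where $p\neq q$ is needed to invoke Theorem~\ref{ll}. Only after $\Sigma_1K=K\Sigma_1$ is known do you get $K=K^*\Sigma_1^{-2(p-q+1)}$. That gives $K^2=(K^*)^2=\Sigma_1^{-2(p-q+1)}$, and hence $I_r=K^2(K^*)^2=\Sigma_1^{-4(p-q+1)}$. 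The hypothesis $p-q+1\neq0$ then enters through Theorem~\ref{QQ9}.

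\textbf{The hypothesis $p\neq q$ cannot be dropped.} So any argument that never uses it must be wrong. Take
\[
\hat{A}=\begin{bmatrix}0&2\\ \tfrac12&0\end{bmatrix},\qquad \Sigma_1=\operatorname{diag}(2,\tfrac12),\qquad K=\begin{bmatrix}0&1\\1&0\end{bmatrix},\qquad L=0 .
\]
Here $\hat{A}^{\dagger}=\hat{A}^{-1}=\hat{A}$, so $(b)$ holds for every $p=q$, where $p-q+1=1\neq 0$. Yet $\hat{A}\neq\hat{A}^*$.

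At $p=q=0$ your $(1,1)$ identity coincides with the correct one, and this $\hat{A}$ satisfies it. Moreover $\Sigma_1^{2}K\Sigma_1^{2}=K$ holds, while $\Sigma_1^{2k}K\neq K\Sigma_1^{2k}$ for every $k\neq 0$. This is exactly the situation your commutation step claims to exclude.

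\textbf{The same gap affects your sketch of $(c)$--$(g)$.} In each item the two nonvanishing hypotheses play separate roles. One is needed to obtain commutation from Theorem~\ref{ll} via the squaring trick. The other is needed to conclude $\Sigma_1=I_r$ from Theorem~\ref{QQ9}. They do not both serve Theorem~\ref{ll}.
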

\begin{proof}
By Theorem~\ref{t3}, condition $(a)$ implies each of the conditions $(b)$--$(g)$. We will now prove that $(b)$ implies $(a)$, noting that a similar argument shows that any of $(c)$--$(g)$ also implies $(a)$.

\noindent  $(b)\Rightarrow(a):$   
By $(\ref{F1})$ and Lemma~\ref{LN}, when  $\hat{A}^{\dagger}$ exists, it follows from  $\hat{A}(\hat{A}\hat{A}^*)^{p}=\hat{A}^{\dagger}(\hat{A}\hat{A}^*)^{q}$ that
\begin{align}\label{4.15}
\Sigma_2=0, \quad L^*\Sigma_1^{2p-1} = 0 \quad \text{and} \quad \Sigma_1 K = K^{*}\Sigma_1^{2(q - p)-1}.
\end{align}
Thus, $L=0$ readily follows, implying 
$\Sigma_1 K K^{*}\Sigma_1^{2(q - p)-1}=K^{*}\Sigma_1^{2(q - p)-1} \Sigma_1 K$,
 and hence $ K\Sigma_1^{2(q - p)}=\Sigma_1^{2(q - p)}K.$ 
For $p \neq q$, by Theorem~\ref{ll} we get  $K\Sigma_1 = \Sigma_1 K$, which combined with (\ref{4.15}) gets $K\Sigma_1 = K^{*}\Sigma_1^{2(p - q)-1}$. 
        Hence, $\Sigma_1^{2(p - q + 1)} = (K^*)^2 = K^2$, so $\Sigma_1^{4(p - q + 1)} = K^2(K^*)^2 = I_r$.
By Theorem~\ref{QQ9}, if $p - q + 1 \neq 0$, then $\Sigma_1 = I_{r}$.
This yields $K^2 = I_{r}$, whence  $K = K^*$.
Then it follows that $\hat{A}\in\mathbb{DQ}_n^{3\text{-}OP}$.
      \end{proof}

It is natural to ask whether Theorem~\ref{tt5} remains valid without the DMPGI, and we find that the additional condition $\Sigma_2 = 0$ is necessary for the theorem to hold.
In what follows, we focus on the case of the NDMPI. For $p,q \in \mathbb{Z}$,
the condition $\Sigma_2 = 0$ naturally arises in calculations involving integer powers of  $\hat{A}\hat{A}^*$ or $\hat{A}^*\hat{A}$, as well as their products with  $\hat{A}$, $\hat{A}^*$ or $\hat{A}\sp{\scriptscriptstyle N}$.
Consequently, within the framework of Theorem~\ref{tt5}, setting \(p = 0\) or \(q = 0\) leads to the following results:

\begin{theorem}\label{t6}
    Let $\hat{A}\in\mathbb{DQ}^{n\times n}$ and $p \in \mathbb{Z}\setminus\{0\}$. Then the following statements are equivalent: \\
\noindent $(a)$  $\hat{A}$ is a dual orthogonal tripotent matrix;\ \
\noindent $(b)$  $\hat{A}^*=\hat{A}(\hat{A}\hat{A}^*)^p$ with  $p \neq 1$;\\
\noindent $(c)$  $\hat{A}=\hat{A}^*(\hat{A}\hat{A}^*)^p$ with $p \neq -1$;\ \ \quad \quad \quad \quad \quad
\noindent $(d)$  $\hat{A}=\hat{A}\sp{\scriptscriptstyle N}(\hat{A}\hat{A}^*)^p$ with  $p \neq 1$;\\
\noindent $(e)$  $\hat{A}^*=\hat{A}(\hat{A}^*\hat{A})^p$ with  $p \neq -1$;\ \ \quad \quad \quad \quad \quad
\noindent $(f)$  $\hat{A}=\hat{A}^*(\hat{A}^*\hat{A})^p$ with  $p \neq 1$;\\
\noindent $(g)$  $\hat{A}=\hat{A}\sp{\scriptscriptstyle N}(\hat{A}^*\hat{A})^p$ with  $p \neq 1$.
\end{theorem}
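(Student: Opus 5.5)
The forward direction is immediate. If $\hat{A}\in\mathbb{DQ}_n^{3\text{-}OP}$, then by Theorem~\ref{t3} we have $\hat{A}=\hat{A}^*=\hat{A}\sp{\scriptscriptstyle N}$, and $\hat{A}\hat{A}^*=\hat{A}^*\hat{A}=\hat{A}^2$. By (\ref{E1}) this is $\hat{U}\,\mathrm{diag}(I_{f+g},0)\hat{U}^*$, a dual orthogonal idempotent matrix fixed by every integer power; negative powers are read as powers of $\hat{A}\sp{\scriptscriptstyle N}$, and $P\sp{\scriptscriptstyle N}=P$ for such $P$. Since $\hat{A}P=P\hat{A}=\hat{A}$, each of $(b)$--$(g)$ reduces to $\hat{A}=\hat{A}^*$ or $\hat{A}=\hat{A}\sp{\scriptscriptstyle N}$. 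So the work is the converse, and I would prove only $(b)\Rightarrow(a)$ in detail.

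\emph{Step 1: block formulas.} Writing $\hat{A}$ as in (\ref{F1}) and using (\ref{SS}), I get $\hat{A}\hat{A}^*=\hat{U}\,\mathrm{diag}(\Sigma_1^2,\Sigma_2^2)\hat{U}^*=\hat{U}\,\mathrm{diag}(\Sigma_1^2,0)\hat{U}^*$, because $\Sigma_2$ is infinitesimal and so $\Sigma_2^2=0$. Hence $(\hat{A}\hat{A}^*)^p=\hat{U}\,\mathrm{diag}(\Sigma_1^{2p},0)\hat{U}^*$ for all $p\neq0$; for $p<0$ this uses the NDMPI convention and Lemma~\ref{LN}. Therefore
\[
\hat{A}(\hat{A}\hat{A}^*)^p=\hat{U}\begin{bmatrix}\Sigma_1K\Sigma_1^{2p}&0\\ \Sigma_2M\Sigma_1^{2p}&0\end{bmatrix}\hat{U}^*.
\]

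\emph{Step 2: reduce to a condition on $K$ and $\Sigma_1$.} Comparing this with $\hat{A}^*=\hat{U}\begin{bmatrix}K^*\Sigma_1&M^*\Sigma_2\\ L^*\Sigma_1&N^*\Sigma_2\end{bmatrix}\hat{U}^*$ gives three facts:
\begin{itemize}
\item $L^*\Sigma_1=0$, so $L=0$, and then $M=0$ by (\ref{ZXC});
\item $M^*\Sigma_2=N^*\Sigma_2=0$, so $\Sigma_2=\Sigma_2(MM^*+NN^*)=0$;
\item $K^*\Sigma_1=\Sigma_1K\Sigma_1^{2p}$.
\end{itemize}
By Theorem~\ref{t1}$(c)$, it then suffices to show $\Sigma_1=I_r$, since the last identity becomes $K^*=K$.

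\emph{Step 3 (the main obstacle): show $\Sigma_1=I_r$ without a commutation hypothesis.} Unlike Theorem~\ref{tt5}, multiplying by $KK^*$ does not directly give commutation of $K$ with a power of $\Sigma_1$. I would instead compute $(K^*\Sigma_1)^*(K^*\Sigma_1)=\Sigma_1^2$ in two ways, which yields
\[
K^*\Sigma_1^2K=\Sigma_1^{2-4p}.
\]
So $\Sigma_1^2$ and $\Sigma_1^{2-4p}$ are unitarily similar. I first pass to standard parts, where $K_1$ is a quaternion unitary matrix, so the right eigenvalues (with multiplicity) of $\Sigma_{1s}^2$ and $\Sigma_{1s}^{2-4p}$ coincide. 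Order $a_1\ge\cdots\ge a_r>0$ and split by the sign of $p$.
\begin{itemize}
\item If $p<0$, the map $x\mapsto x^{2-4p}$ is increasing, so sorted matching gives $a_i^2=a_i^{2-4p}$, hence $a_i^{-4p}=1$ and $a_i=1$.
\item If $p\ge1$, the map is decreasing, so $a_i=a_{r+1-i}^{1-2p}$ and $a_{r+1-i}=a_i^{1-2p}$. Hence $a_i=a_i^{(1-2p)^2}$ with $(1-2p)^2\neq1$, precisely because $p\neq0,1$. This forces $a_i=1$.
\end{itemize}
So $\Sigma_{1s}=I_r$. With this, $K^*\Sigma_1^2K=\Sigma_1^{2-4p}$ says $\Sigma_1^2K=K\Sigma_1^{2-4p}$. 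Now every entry of $\Sigma_1$ has standard part $1$, so $\Sigma_1^{m}=I_r+\epsilon m\Sigma_{1d}$. Comparing $\epsilon$-parts then gives $2\Sigma_{1d}K_1=(2-4p)K_1\Sigma_{1d}$, so $\Sigma_{1d}=(1-2p)K_1\Sigma_{1d}K_1^*$. Taking traces gives $\mathrm{tr}\,\Sigma_{1d}=(1-2p)\,\mathrm{tr}\,\Sigma_{1d}$, hence $\mathrm{tr}\,\Sigma_{1d}=0$ since $p\neq0$. Taking Frobenius norms gives $\|\Sigma_{1d}\|=|1-2p|\,\|\Sigma_{1d}\|$, hence $\Sigma_{1d}=0$ since $|1-2p|\neq1$. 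Thus $\Sigma_1=I_r$ (consistent with Theorem~\ref{QQ9}), so $K=K^*$ and $(a)$ follows from Theorem~\ref{t1}.

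\emph{Remaining items.} Items $(c)$--$(g)$ follow the same template with the corresponding block formulas. For $(c)$ and $(e)$--$(g)$ I use $\hat{A}^*\hat{A}=\hat{U}\begin{bmatrix}K^*\Sigma_1^2K&0\\0&0\end{bmatrix}\hat{U}^*$ once $L=0$ and $\Sigma_2=0$ are extracted from the off-diagonal blocks. Each time, the core identity is the same unitary-similarity relation between two powers of $\Sigma_1$. The excluded value of $p$ in each item is exactly the one that makes the resulting exponent ratio equal to $\pm1$, and that sorted-eigenvalue matching is the crux.
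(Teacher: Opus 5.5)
Your proof of $(b)\Rightarrow(a)$ is correct, but Step~3 takes a different route from the paper. The reason you give for needing it is also mistaken: the paper does get a commutation relation directly.

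Write $X=\Sigma_1^{-1}K^*=K\Sigma_1^{2p-1}$, which is (\ref{W1}). The paper evaluates $X^2$ using one expression for each factor: $(\Sigma_1^{-1}K^*)(K\Sigma_1^{2p-1})=\Sigma_1^{2p-2}$ and $(K\Sigma_1^{2p-1})(\Sigma_1^{-1}K^*)=K\Sigma_1^{2p-2}K^*$. This gives the genuine commutation $\Sigma_1^{2(p-1)}K=K\Sigma_1^{2(p-1)}$. Theorem~\ref{ll} upgrades it to $\Sigma_1K=K\Sigma_1$ exactly when $p\neq1$. Then $K^2=\Sigma_1^{2p}$ and $\Sigma_1^{4p}=K^2(K^*)^2=I_r$, and Theorem~\ref{QQ9} finishes.

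You instead form the Gram product $X^*X$, which yields only the twisted relation $K^*\Sigma_1^2K=\Sigma_1^{2-4p}$. You then need two further tools:
\begin{itemize}
\item spectral matching on standard parts (invariance of eigenvalue multisets under quaternion unitary similarity, monotonicity of $x\mapsto x^{2-4p}$, and the reflection $i\mapsto r+1-i$ when $p\ge2$);
\item a Frobenius-norm scaling argument for $\Sigma_{\mathrm{1d}}$.
\end{itemize}
All of this checks out. Your route is self-contained (it avoids Theorems~\ref{ll} and~\ref{QQ9}) and explains the excluded value transparently as the $p$ with $1-2p=\pm1$. The paper's route is shorter, purely algebraic, and needs no case split on the sign of $p$. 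Your trace step is redundant once you have the norm step.

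Two things need fixing.

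\emph{Order in Step~2.} The $(2,1)$ block actually reads $L^*\Sigma_1=\Sigma_2M\Sigma_1^{2p}$. So you must first get $\Sigma_2=0$ from the $(1,2)$ and $(2,2)$ blocks, and only then conclude $L=0$.

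\emph{Items $(f)$ and $(g)$.} Your template does not work as stated here. $L$ never appears alone in an off-diagonal block (see (\ref{W2})--(\ref{W3})), and $\hat{A}^*\hat{A}$ is not block diagonal until $L=0$ is known. The missing steps are:
\begin{itemize}
\item Right-multiply the identity by $V^*$, where $V=\begin{bmatrix}K&L\\M&N\end{bmatrix}$, and read off $\Sigma_2=0$ from the $(2,2)$ block.
\item This gives (\ref{W4}) for $(f)$; for $(g)$ it gives $K^*\Sigma_1^{-1}K^*\Sigma_1^{2p}=\Sigma_1$ and $L^*\Sigma_1^{-1}K^*\Sigma_1^{2p}=0$.
\item The identity $K^*\Sigma_1K^*\Sigma_1^{2p}=\Sigma_1$ (respectively its $(g)$ analogue) shows $K^*$ has a right inverse, hence $K$ is invertible. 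The paper derives $K^*K=I_r$ from (\ref{W4}).
\item Only then does $L^*\Sigma_1K^*\Sigma_1^{2p}=0$ (respectively $L^*\Sigma_1^{-1}K^*\Sigma_1^{2p}=0$) force $L=0$.
\end{itemize}
After that your Gram-product argument goes through. It gives $K\Sigma_1^{2-4p}K^*=\Sigma_1^2$ for $(f)$ and $K^*\Sigma_1^2K=\Sigma_1^{4p-2}$ for $(g)$, with excluded value $p=1$ in both cases.

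Finally, $(c)$ involves $\hat{A}\hat{A}^*$, not $\hat{A}^*\hat{A}$ as your sketch says.
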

\begin{proof}
  It is clear that  $(a)$ implies $(b)$-$(g)$. We only prove $(b) \Rightarrow (a)$ and $(f)\Rightarrow (a)$, as  $(c)-(e) \Rightarrow (a)$ and $(g)\Rightarrow (a)$ 
 follow similarly, respectively.

\noindent  $(b)\Rightarrow(a):$  By $(\ref{F1})$,   $\hat{A}^*=\hat{A}(\hat{A}\hat{A}^*)^p$ is equivalent to 
   \begin{align*}
     L^*\Sigma_1=0, \quad M^*\Sigma_2=0, \quad N^*\Sigma_2=0 \quad \text{and} \quad K^*\Sigma_1=\Sigma_1K\Sigma_1^{2p}.
   \end{align*}
   Clearly, $L=0$. By (\ref{SS}), the above equation implies $\Sigma_2=0$ and 
   \begin{align}\label{W1}
   \Sigma_1^{-1}K^*=K\Sigma_1^{2p-1}.
   \end{align}
   Then $\Sigma_1^{-1}K^*K\Sigma_1^{2p-1}=K\Sigma_1^{2p-1}\Sigma_1^{-1}K^*$, i.e., $\Sigma_1^{2(p-1)}K=K\Sigma_1^{2(p-1)}$. By Theorem~\ref{ll}, $\Sigma_1K= K\Sigma_1$ for $p \neq 1$.
   Consequently, from (\ref{W1}), we get $K^2=\Sigma_1^{2p}$, whence $\Sigma_1^{4p}=K^2(K^*)^2=I_r$. 
Theorem~\ref{QQ9} yields $\Sigma_1=I_r$, which together with (\ref{W1}) implies $K=K^*$.
Further, (\ref{ZXC}) implies  $M=0$  if $L=0$.
It follows from Theorem~\ref{t1} that $\hat{A}\in\mathbb{DQ}_n^{3\text{-}OP}$.

\noindent  $(f)\Rightarrow(a):$  By $(\ref{F1})$, it follows from $\hat{A}=\hat{A}^*(\hat{A}^*\hat{A})^p$ that
   \begin{align}
    (K^*\Sigma_1K^* + M^*\Sigma_2L^*)\Sigma_1^{2p}K=\Sigma_1K, \quad (K^*\Sigma_1K^* + M^*\Sigma_2L^*)\Sigma_1^{2p}L=\Sigma_1L, \label{W2} \\ 
    (L^*\Sigma_1K^* + N^*\Sigma_2L^*)\Sigma_1^{2p}K=\Sigma_2M, \quad (L^*\Sigma_1K^* + N^*\Sigma_2L^*)\Sigma_1^{2p}L=\Sigma_2N. \label{W3}
   \end{align}
Multiplying (\ref{W3}) on the right by $M^*$ and $N^*$ respectively, we obtain $\Sigma_2=0$ from (\ref{SS}). 
Thus, multiplying (\ref{W3}) on the right by $K^*$ and $L^*$  respectively, and the same operation on (\ref{W2}) yields
\begin{align}\label{W4}
 L^*\Sigma_1K^*\Sigma_1^{2p}=0 \quad \text{and} \quad K^*\Sigma_1K^*\Sigma_1^{2p}=\Sigma_1.
  \end{align}
Repeating this operation for (\ref{W4}) gives $\Sigma_1K^*\Sigma_1^{2p}=K\Sigma_1$. 
Substituting this result into the second equation of (\ref{W4}) leads to $K^*K\Sigma_1=\Sigma_1$, which implies $K^*K=I_r$.
From the first equation in (\ref{W4}), we deduce $L^*\Sigma_1K^*=0$, which implies  $L^*\Sigma_1=0$, and hence $L=0$.
Consequently, we have
 $K^*\Sigma_1=K\Sigma_1^{1-2p}$, and an argument similar to that for $(b)\Rightarrow(a)$ shows that $\Sigma_1=I_r$ and $K=K^*$ for $p \neq 1$.
Moreover, from (\ref{ZXC}), we deduce $M=0$ whenever $L=0$.
Finally, by Theorem~\ref{t1}, $\hat{A}\in\mathbb{DQ}_n^{3\text{-}OP}$.
\end{proof}

 Theorem~\ref{t1} implies that $\hat{A}\hat{A}^* \in \mathbb{DQ}_{n}^{OP}$ when $\hat{A}\in\mathbb{DQ}_n^{3\text{-}OP}$. 
Hence, we see that this property characterizes dual orthogonal tripotent matrices. 

\begin{theorem}\label{range} Let $\hat{A} \in \mathbb{DQ}^{n \times n}$ and $p \in \mathbb{Z}\setminus\{0\}$. 
  Then the following statements are equivalent: 

  \noindent $(a)$  $\hat{A}$ is a dual orthogonal tripotent matrix;\ \ \ 
  $(b)$ $\hat{A}\hat{A}^* \in \mathbb{DQ}_{n}^{OP}$ and $\hat{A} = \hat{A}\sp{\scriptscriptstyle N}(\hat{A}\hat{A}^*)^p$;\\ 
 $(c)$ $\hat{A}\hat{A}^* \in \mathbb{DQ}_{n}^{OP}$ and  $\hat{A}^* = \hat{A}(\hat{A}^*\hat{A})^p$;\quad \quad \quad  \
  \noindent $(d)$ $\hat{A}\hat{A}^* \in \mathbb{DQ}_{n}^{OP}$ and  $\hat{A} = \hat{A}^*(\hat{A}\hat{A}^*)^p$;\\ 
  \noindent $(e)$ $\hat{A}\hat{A}^* \in \mathbb{DQ}_{n}^{OP}$ and  $\hat{A}^* = \hat{A}(\hat{A}\hat{A}^*)^p$; \quad \quad \quad  
  \noindent $(f)$  $\hat{A}\hat{A}^* \in \mathbb{DQ}_{n}^{OP}$ and $\hat{A}=\hat{A}\sp{\scriptscriptstyle N}(\hat{A}^*\hat{A})^p$;\\
  \noindent $(g)$ $\hat{A}\hat{A}^* \in \mathbb{DQ}_{n}^{OP}$ and  $\hat{A}= \hat{A}^*(\hat{A}^*\hat{A})^p$.
\end{theorem}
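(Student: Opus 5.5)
The plan is to prove $(a)\Rightarrow(b)$--$(g)$ directly from Theorem~\ref{t1}, and then to prove each reverse implication by the same scheme used in Theorem~\ref{t6}. The new ingredient is the hypothesis $\hat{A}\hat{A}^*\in\mathbb{DQ}_n^{OP}$, which pins down $\Sigma_1$ and $\Sigma_2$ at the outset and so removes the exceptional values of $p$ excluded in Theorem~\ref{t6}.

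For necessity, Theorem~\ref{t1}(c) gives $\Sigma_1=I_r$, $\Sigma_2=0$, $L=0$, $M=0$ and $K=K^*$. Combined with $KK^*=I_r$, this yields $K^2=I_r$. Then $\hat{A}\hat{A}^*=\hat{A}^*\hat{A}=\hat{U}\,\mathrm{diag}(I_r,0)\hat{U}^*$ is a dual orthogonal idempotent, and $\hat{A}\sp{\scriptscriptstyle N}=\hat{A}^*=\hat{A}$ by Theorem~\ref{t3}. Each identity in (b)--(g) therefore reduces to $\hat{A}\,\hat{U}\mathrm{diag}(I_r,0)\hat{U}^*=\hat{A}$ and its variants, all of which hold trivially.

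For sufficiency, the first step is to extract information from $\hat{A}\hat{A}^*\in\mathbb{DQ}_n^{OP}$ alone. By (\ref{F1}) and (\ref{SS}),
$$\hat{A}\hat{A}^*=\hat{U}\begin{bmatrix}\Sigma_1^2&0\\0&\Sigma_2^2\end{bmatrix}\hat{U}^*=\hat{U}\begin{bmatrix}\Sigma_1^2&0\\0&0\end{bmatrix}\hat{U}^*,$$
because $\Sigma_2^2=0$. Idempotency gives $\Sigma_1^4=\Sigma_1^2$, and since $\Sigma_1$ is invertible this means $\Sigma_1^2=I_r$. Theorem~\ref{QQ9} then gives $\Sigma_1=I_r$. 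Consequently $\hat{A}\hat{A}^*=\hat{U}\mathrm{diag}(I_r,0)\hat{U}^*$, and every power $(\hat{A}\hat{A}^*)^p$ with $p\neq0$ equals this same matrix. For negative $p$ this uses the convention $(\cdot)^{-\alpha}=((\cdot)^{\text{\tiny N}})^\alpha$; the NDMPI of an orthogonal projector is itself, so the same holds there. Note that this step does not yet force $\Sigma_2=0$.

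The second step handles each condition using this simplification. In (e), the identity reads $\hat{A}^*=\hat{A}\hat{U}\mathrm{diag}(I_r,0)\hat{U}^*$. Comparing blocks gives $L^*=0$, $M^*\Sigma_2=0$, $N^*\Sigma_2=0$ and $K^*=K$. Hence $L=0$, then $M=0$ by (\ref{ZXC}), and $\Sigma_2=0$ by (\ref{SS}); Theorem~\ref{t1}(c) finishes. Case (d) is the adjoint of this computation. For (b), Lemma~\ref{LN} with $\Sigma_1=I_r$ gives
$$\hat{A}\sp{\scriptscriptstyle N}\hat{U}\mathrm{diag}(I_r,0)\hat{U}^*=\hat{U}\begin{bmatrix}K^*&0\\L^*&0\end{bmatrix}\hat{U}^*.$$
Equating this with $\hat{A}$ yields $L=0$, $\Sigma_2M=L^*=0$, $\Sigma_2N=0$ and $K=K^*$, so again $\Sigma_2=0$.

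The cases involving $\hat{A}^*\hat{A}$, namely (c), (f) and (g), are the main obstacle, because $\hat{A}^*\hat{A}$ is not block diagonal in the Hartwig--Spindelb\"ock frame. I would first derive $L=0$ from the hypothesis and only then use the block structure. In (g), the identity $\hat{A}=\hat{A}^*(\hat{A}^*\hat{A})^p$ forces the column space of $\hat{A}$ to lie in that of $\hat{A}^*$. Multiplying on the left by $\hat{A}\hat{A}^*$ and using $\hat{A}\hat{A}^*\hat{A}=\hat{U}\mathrm{diag}(I_r,0)\hat{U}^*\hat{A}=\hat{A}_e$, I would compare blocks as in the proof of $(f)\Rightarrow(a)$ in Theorem~\ref{t6}. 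That is, multiply the block equations on the right by $K^*,L^*,M^*,N^*$ and use (\ref{SS}) to obtain $\Sigma_2=0$ and then $L=0$. Once $L=0$, (\ref{ZXC}) gives $M=0$, and $\hat{A}^*\hat{A}$ becomes $\hat{U}\mathrm{diag}(I_r,0)\hat{U}^*$, so the argument reduces to the cases already done. If this route proves messy, the fallback is simply to invoke Theorem~\ref{t6}(e),(f),(g) for $p\neq\pm1$ and check $p=\pm1$ by hand using $\Sigma_1=I_r$, where the equations collapse to $K=K^*$.
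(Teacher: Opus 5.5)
Your proposal is correct and follows essentially the paper's route: the hypothesis $\hat{A}\hat{A}^*\in\mathbb{DQ}_n^{OP}$ forces $\Sigma_1=I_r$, after which the block comparisons of Theorem~\ref{t6} give $\Sigma_2=0$, $L=0$ and $K=K^*$, and Theorem~\ref{t1}(c) finishes. One caution for case (g): do not premultiply by $\hat{A}\hat{A}^*$ before comparing blocks, because that annihilates the second block row, which is exactly what yields $\Sigma_2=0$ (for instance, $\hat{A}=\hat{U}\,\mathrm{diag}(I_r,\epsilon,0,\ldots,0)\hat{U}^*$ satisfies the hypothesis $\hat{A}\hat{A}^*\in\mathbb{DQ}_n^{OP}$ and the premultiplied identity but is not tripotent); instead work directly with the original block equations as in the proof of $(f)\Rightarrow(a)$ in Theorem~\ref{t6}, which is what your following sentence describes anyway.
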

\begin{proof} 
 The implication from $(a)$ to $(b)$-$(g)$ is evident. The proof will be given only for the implication  $(b) \Rightarrow (a)$, as the remaining implications follow analogously.\\
\noindent  $(b)\Rightarrow(a):$ From (\ref{F1}),  $\hat{A}\hat{A}^* \in \mathbb{DQ}_{n}^{OP}$  is equivalent to $\Sigma_1^2= \Sigma_1$, and hence implies $\Sigma_1=I_r$. 
Similarly to Theorem~\ref{t6}, $\hat{A} = \hat{A}\sp{\scriptscriptstyle N}(\hat{A}\hat{A}^*)^p$ yields
$ \Sigma_2=0$, $L=0$ and $\Sigma_1 K=K^*\Sigma_1^{2p-1}$.
Combining this with  $\Sigma_1=I_r$, we obtain $K=K^*$.
Therefore,  $\hat{A}\in\mathbb{DQ}_n^{3\text{-}OP}$.
\end{proof}

\begin{remark}
In this section, all results except Theorem~\ref{tt5} are developed for the NDMPI.
It is natural to investigate whether the preceding conclusions hold for DMPGI.
However, the existence of the DMPGI imposes the additional condition $\Sigma_2=0$, which by  (\ref{F1}) and (\ref{Ee}) is equivalent to $\hat{A}=\hat{A}_e$.
Accordingly, replacing $\hat{A}\sp{\scriptscriptstyle N}$ with $\hat{A}^{\dagger}$  in the above results requires only the additional condition $\hat{A}=\hat{A}_e$, and further discussion is omitted.
\end{remark}

\section{Conclusions}\label{555}
This paper has introduced the concept of dual orthogonal tripotent matrices and derives their fundamental properties. 
The Hermiticity of such matrices inherently guarantees dual unitary diagonalizability, which underpins analyses of their structural features and links with dual generalized inverse matrices.
We derive characterizations of dual orthogonal tripotent matrices via mean-value equalities incorporating dual quaternion matrices, their dual generalized inverses, the essential components of such matrices, and associated matrix products. 
In addition, we derive more characterization conditions by investigating the positive integer powers of such matrices.
Collectively, these results extend the theory of orthogonal tripotent matrices in the dual quaternion setting and lay a solid foundation for their potential applications. 

\vspace{5pt}
The following topics are proposed for further research:
\begin{itemize}
    \item Further characterize dual orthogonal tripotent matrices by means of the rank and trace of dual quaternion matrices.
    \item Investigate applications of dual orthogonal tripotent matrices.
\end{itemize}

\backmatter


\section*{Declarations}
\textbf{Conflict of interest} \ \  The authors declare that they have no relevant financial or non-financial
competing interests.

\bibliography{sn-bibliography}

\begin{thebibliography}{20}
\providecommand{\natexlab}[1]{#1}
\providecommand{\url}[1]{{#1}}
\providecommand{\urlprefix}{URL }
\providecommand{\doi}[1]{\url{https://doi.org/#1}}
\providecommand{\eprint}[2][]{\url{#2}}
 \bibcommenthead

\bibitem[{Baksalary and Trenkler(2013)}]{Baksalary2}
Baksalary O, Trenkler G (2013) On $k${-}potent matrices. The Electronic Journal of Linear Algebra 26:446--470. \doi{10.13001/1081-3810.1664}

\bibitem[{Chen et~al(2024)Chen, Hu, Wang, and Guo}]{chen}
Chen S, Hu H, Wang S, et~al (2024) Dual quaternion matrices in precise formation flying of satellite clusters. Communications on Applied Mathematics and Computation pp 1--18. \doi{10.1007/s42967-024-00460-4}

\bibitem[{Clifford(1871)}]{Clifford}
Clifford (1871) Preliminary sketch of biquaternions. Proceedings of the London Mathematical Society 1(1):381--395. \doi{10.1112/PLMS/S1-4.1.381}

\bibitem[{Conway(2019)}]{conway}
Conway JB (2019) A course in functional analysis. Springer

\bibitem[{Cui and Qi(2025)}]{Cui1}
Cui C, Qi L (2025) A genuine extension of the {Moore--Penrose} inverse to dual matrices. Journal of Computational and Applied Mathematics 454:116185. \doi{10.1016/j.cam.2024.116185}

\bibitem[{Ding(2026)}]{Ding}
Ding W (2026) Algebraic method for eigenvalue problems of dual quaternion hermitian matrices and its application in rgb-hsv-based face representation and recognition. Applied Mathematics Letters 174:109829. \doi{10.1016/j.aml.2025.109829}

\bibitem[{Giribet et~al(2025)Giribet, Marciano, Mas, Ghersin, Villa, and Sarcinelli-Filho}]{Giribet}
Giribet JI, Marciano HN, Mas I, et~al (2025) Dual quaternion{-}based control for dynamic robot formations. In: 2025 International Conference on Unmanned Aircraft Systems, IEEE, pp 526--533, \doi{10.1109/ICUAS65942.2025.11007809}

\bibitem[{Gu and Luh(1987)}]{Gu}
Gu Y, Luh J (1987) Dual-number transformation and its applications to robotics. IEEE Journal on Robotics and Automation 3(6):615--623. \doi{10.1109/JRA.1987.1087138}

\bibitem[{Khatri(1980)}]{Khat}
Khatri C (1980) Powers of matrices and idempotency. Linear Algebra and its Applications 33:57--65. \doi{10.1016/0024-3795(80)90097-X}

\bibitem[{Li and Wang(2023)}]{Li1}
Li H, Wang H (2023) Weak dual generalized inverse of a dual matrix and its applications. Heliyon 9(6). \doi{10.1016/j.heliyon.2023.e16624}

\bibitem[{Mei et~al(2026{\natexlab{a}})Mei, Zuo, and Jiang}]{Mei2}
Mei T, Zuo K, Jiang W (2026{\natexlab{a}}) Orthogonal tripotent matrices. Linear and Multilinear Algebra pp 1--17. \doi{10.1080/03081087.2026.2689182}

\bibitem[{Mei et~al(2026{\natexlab{b}})Mei, Zuo, and Yan}]{mei1}
Mei T, Zuo K, Yan H (2026{\natexlab{b}}) Further results for the dual hartwig-spindelb{\" o}ck decomposition and its applications. Computational and Applied Mathematics 45(8):353. \doi{10.1007/s40314-026-03689-2}

\bibitem[{Pes and Rodriguez(2023)}]{pes}
Pes F, Rodriguez G (2023) A projection method for general form linear least-squares problems. Applied Mathematics Letters 145:108780. \doi{10.1016/j.aml.2023.108780}

\bibitem[{Qi and Luo(2023)}]{Qi1}
Qi L, Luo Z (2023) Eigenvalues and singular values of dual quaternion matrices. Pacific Journal of Optimization 19(2):257

\bibitem[{Qi et~al(2022)Qi, Ling, and Yan}]{Qi2}
Qi L, Ling C, Yan H (2022) Dual quaternions and dual quaternion vectors. Communications on Applied Mathematics and Computation 4(4):1494--1508. \doi{10.1007/s42967-022-00189-y}

\bibitem[{To{\v{s}}i{\'c} and Cvetkovi{\'c}-Ili{\'c}(2013)}]{To}
To{\v{s}}i{\'c} M, Cvetkovi{\'c}-Ili{\'c} DS (2013) The invertibility of the difference and the sum of commuting generalized and hypergeneralized projectors. Linear and Multilinear Algebra 61(4):482--493. \doi{10.1080/03081087.2012.689987}

\bibitem[{Udwadia et~al(2020)Udwadia, Pennestri, and de~Falco}]{Udwadia1}
Udwadia FE, Pennestri E, de~Falco D (2020) Do all dual matrices have dual {Moore--Penrose} generalized inverses? Mechanism and Machine Theory 151:103878. \doi{10.1016/j.mechmachtheory.2020.103878}

\bibitem[{Wang(2021)}]{Wang2}
Wang H (2021) Characterizations and properties of the {MPDGI} and {DMPGI}. Mechanism and Machine Theory 158:104212. \doi{10.1016/j.mechmachtheory.2020.104212}

\bibitem[{Xiao and Chen(2025)}]{xiao}
Xiao FZ, Chen LQ (2025) Unwinding{-}free property of the dual{-}quaternion{-}based pose tracking controllers designed by fully actuated system approaches. Aerospace Science and Technology 162:110197. \doi{10.1016/j.ast.2025.110197}

\bibitem[{Yang and Wang(2010)}]{Yang}
Yang J, Wang X (2010) The application of the dual number methods to scara kinematics. In: 2010 International Conference on Mechanic Automation and Control Engineering, IEEE, pp 3871--3874

\end{thebibliography}
\end{document}